\newtheorem{theorem}{Theorem}[section]
\newtheorem{lemma}[theorem]{Lemma}
\newtheorem{cor}[theorem]{Corollary}
\newtheorem{prop}[theorem]{Proposition}
\theoremstyle{definition}
\newtheorem{defn}[theorem]{Definition}
\newtheorem{hypothesis}[theorem]{Hypothesis}
\newtheorem{example}[theorem]{Example}
\newtheorem{remark}[theorem]{Remark}
\numberwithin{equation}{theorem}
\newcommand{\FF}{\mathbb{F}}
\newcommand{\Fp}{\mathbb{F}_p}
\newcommand{\LL}{\mathbb{L}}
\newcommand{\Qp}{\mathbb{Q}_p}
\newcommand{\QQ}{\mathbb{Q}}
\newcommand{\RR}{\mathbb{R}}
\newcommand{\Zp}{\mathbb{Z}_p}
\newcommand{\ZZ}{\mathbb{Z}}
\newcommand{\bA}{\mathbf{A}}
\newcommand{\bB}{\mathbf{B}}
\newcommand{\bC}{\mathbf{C}}
\newcommand{\calA}{\mathcal{A}}
\newcommand{\calB}{\mathcal{B}}
\newcommand{\calC}{\mathcal{C}}
\newcommand{\calF}{\mathcal{F}}
\newcommand{\calG}{\mathcal{G}}
\newcommand{\calH}{\mathcal{H}}
\newcommand{\calO}{\mathcal{O}}
\DeclareMathOperator{\BMod}{\mathbf{BMod}}
\DeclareMathOperator{\Cone}{Cone}
\DeclareMathOperator{\cont}{cont}
\DeclareMathOperator{\coker}{coker}
\DeclareMathOperator{\CPhi}{\mathcal{C}\Phi}
\DeclareMathOperator{\dR}{dR}
\DeclareMathOperator{\et}{\acute{e}t}
\DeclareMathOperator{\Ext}{Ext}
\DeclareMathOperator{\GL}{GL}
\DeclareMathOperator{\image}{image}
\DeclareMathOperator{\proet}{pro\acute{e}t}
\DeclareMathOperator{\Res}{Res}
\DeclareMathOperator{\rig}{rig}
\DeclareMathOperator{\Spa}{Spa}
\DeclareMathOperator{\Tor}{Tor}
\DeclareMathOperator{\Tot}{Tot}
\title{Finiteness of cohomology of local systems on rigid analytic spaces}
\author{Kiran S. Kedlaya and Ruochuan Liu}
\date{November 21, 2016}
\begin{document}

\maketitle

\begin{abstract}
We prove that the cohomology groups of an \'etale $\QQ_p$-local system on a smooth proper rigid analytic space are finite-dimensional $\QQ_p$-vector spaces, provided that the base field is either a finite extension of $\QQ_p$ or an algebraically closed nonarchimedean field containing $\QQ_p$.
This result manifests as a special case of a more general finiteness result for the higher direct images of a relative $(\varphi, \Gamma)$-module along a smooth proper morphism of rigid analytic spaces over a mixed-characterstic nonarchimedean field.
\end{abstract}

Throughout this paper,  fix a prime number $p$, and 
let $K$ be a field of characteristic $0$ which is complete with respect to a multiplicative absolute value extending the $p$-adic absolute value on $\QQ$.
We prove the following theorem.
\begin{theorem} \label{T:finiteness1}
Let $X$ be a smooth proper rigid analytic variety over $K$.
Let $V$ be an \'etale $\QQ_p$-local system on $X$. 
\begin{enumerate}
\item[(a)]
Suppose that $K$ is algebraically closed.
Then the (continuous) cohomology groups
$H^i(X, V)$ are finite-dimensional $\QQ_p$-vector spaces for all $i \geq 0$
and vanish for $i > 2 \dim(X)$; moreover, they remain invariant under base extension from $K$ to a larger algebraically closed field.
\item[(b)]
Suppose that $K$ is a finite extension of $\QQ_p$.
Then the cohomology groups
$H^i(X, V)$ are finite-dimensional $\QQ_p$-vector spaces for all $i \geq 0$
and vanish for $i > 2 \dim(X) + 2$.
\end{enumerate}
\end{theorem}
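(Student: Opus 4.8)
\emph{Strategy, and reduction of (b) to (a).} The plan is to compute $H^\bullet(X,V)$ through relative $(\varphi,\Gamma)$-modules and to reduce its finiteness to Kiehl's coherence theorem on the proper space $X$; as the abstract signals, Theorem~\ref{T:finiteness1} is to be obtained as the case $f\colon X\to\Spa(K)$ of a more general assertion that the higher direct images $R^if_\ast\widetilde D$ of a relative $(\varphi,\Gamma)$-module along a smooth proper morphism $f$ are again relative $(\varphi,\Gamma)$-modules. First I would dispose of part (b) granting part (a): base extension to $\Cp$ and the Hochschild--Serre spectral sequence
\[
H^p_{\mathrm{cont}}\bigl(\Gal(\overline K/K),\,H^q(X_{\Cp},V)\bigr)\;\Longrightarrow\;H^{p+q}(X,V)
\]
reduce (b) to the finiteness of $H^q(X_{\Cp},V)$ --- which is part (a), the base-change clause being used to replace an arbitrary algebraically closed complete field by $\Cp$ --- combined with Tate's theorem that the continuous Galois cohomology of $K$ with coefficients in a finite-dimensional $\Qp$-representation is finite-dimensional and vanishes above degree $2$; this also gives the vanishing range $i>2\dim(X)+2$ of (b) from the range $i>2\dim(X)$ of (a).

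\emph{From the local system to period sheaves.} For part (a) I would pass to the pro-\'etale site $X_{\proet}$ and attach to $V$, by the period structure sheaves of relative $p$-adic Hodge theory, a relative $(\varphi,\Gamma)$-module $\widetilde D$ over the extended Robba sheaf $\widetilde{\mathbb B}^{\dagger}_{\rig,X}$, finite locally free of rank $\rank V$ and \'etale (of slope $0$) because $V$ is a local system, together with a canonical isomorphism $R\Gamma(X,V)\cong R\Gamma\bigl(X_{\proet},\Tot\,C^\bullet_{\varphi,\Gamma}(\widetilde D)\bigr)$, where $C^\bullet_{\varphi,\Gamma}$ is the Koszul (``Herr'') complex on $\varphi-1$ and a topological generator of $\Gamma=\Gal(K(\mu_{p^\infty})/K)$. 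Since $K$ is algebraically closed, $\Gamma$ is trivial and the Herr complex is the two-term complex $[\widetilde D\xrightarrow{\varphi-1}\widetilde D]$; equivalently one is computing the cohomology of $X$ with values in the vector bundle on the relative Fargues--Fontaine curve attached to $V$ and its de Rham companion. The real content is then the finiteness, with uniform amplitude bounds, of the pro-\'etale cohomology of $\widetilde D$ together with the analogous groups for the de Rham period sheaf $\mathbb B^{+}_{\dR,X}$.

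\emph{Reduction to coherent cohomology; base change.} To get this I would combine the exact sequences of relative $p$-adic Hodge theory relating $V$ to the period sheaves, the slope filtration to reduce $\widetilde D$ to the \'etale case, almost purity, and Scholze's computation of $R\nu_\ast$ of the period sheaves (where $\nu\colon X_{\proet}\to X_{\et}$), by which $R\nu_\ast\hat{\mathcal O}_X$ is a bounded complex whose term in degree $j$ is $\Omega^j_X$ for $0\le j\le\dim X$. This expresses every cohomology group in sight in terms of the coherent cohomology groups $H^i(X,\mathcal F)$ of coherent $\mathcal O_X$-modules $\mathcal F$, which are finite-dimensional over $K$ and vanish for $i>\dim X$ by Kiehl's theorem since $X$ is proper; the Hodge degree $j\le\dim X$ and the cohomological degree $i\le\dim X$ combine to give cohomological amplitude $2\dim(X)$ for this geometric part. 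When $K$ is algebraically closed the remaining two-term $\varphi$-complex contributes no further degree: $\varphi-1$ is surjective on $\widetilde{\mathbb B}^{\dagger}_{\rig,X}$ with kernel $\Qp$, so its cokernel in top cohomological degree injects into $H^{2\dim(X)+1}(X_{\proet},\Qp)$, which vanishes by the bound $2\dim(X)$ on the pro-\'etale cohomological dimension of a $\dim(X)$-dimensional rigid space over an algebraically closed field. Feeding this through the hypercohomology spectral sequence gives finite-dimensionality of $H^i(X,V)$ over $\Qp$ and vanishing for $i>2\dim(X)$. The base-change invariance in (a) follows since $\widetilde D$, the period sheaves, and Kiehl's coherent cohomology --- computed by a fixed finite complex of finite-dimensional $K$-vector spaces --- are all compatible with enlarging the algebraically closed base field.

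\emph{Main obstacle.} The hard part will be the passage from the pro-\'etale cohomology of the period sheaves to coherent cohomology for an \emph{arbitrary} $\varphi$-module over the relative Robba sheaf, not merely one manifestly coming from a $\Zp$-local system: one cannot just cite finiteness for $\Zp$-local systems, since the slope filtration forces work over enlarged period rings where finiteness with controlled amplitude must be re-established, and the whole construction must be carried out functorially in $X$ in order to descend to the statement about $Rf_\ast$ along a proper morphism (the form in which Theorem~\ref{T:finiteness1} is really proved). In effect what is needed is a relative, $(\varphi,\Gamma)$-equivariant strengthening of the primitive comparison theorem robust enough to handle sheaves of modules over $\widetilde{\mathbb B}^{\dagger}_{\rig,X}$ and $\mathbb B^{+}_{\dR,X}$, not only over $\hat{\mathcal O}_X$.
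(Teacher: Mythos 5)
Your overall architecture (local system $\to$ relative $(\varphi,\Gamma)$-module $\to$ Artin--Schreier complex $[\calF \xrightarrow{\varphi-1} \calF]$, proved as a special case of a higher direct image theorem) matches the paper, and your Hochschild--Serre + Tate reduction of (b) to (a) is valid (the paper notes it but instead deduces (b) from the Leray spectral sequence for $X \to \Spa(K)$ together with the Herr--Liu finiteness of $(\varphi,\Gamma)$-cohomology over a point, reproved via the Cartan--Serre method). However, your core finiteness step has a genuine gap. You propose to reduce to coherent cohomology and Kiehl's coherence theorem via the slope filtration, almost purity, and Scholze's computation of $R\nu_*$ of the period sheaves --- i.e., essentially the primitive-comparison route. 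The paper explicitly identifies this route as the one that fails: it handles $\FF_p$-local systems and hence isogeny $\ZZ_p$-local systems, but there is no global slope filtration for a relative $(\varphi,\Gamma)$-module (slopes jump), and no known $(\varphi,\Gamma)$-equivariant primitive comparison covering general $\QQ_p$-local systems such as the rank-one system on a Tate curve with image $p^{\ZZ}$. What the paper does instead is apply Kiehl's \emph{method} rather than Kiehl's \emph{theorem}: it takes two nested finite coverings of the proper $X$ witnessing properness, builds \v{C}ech--Koszul complexes of the period sheaves over restricted toric towers for each covering, shows the comparison maps are completely continuous (Lemma~\ref{L:inner completely continuous}, Remark~\ref{R:fingen cc}), and invokes the Schwartz/Cartan--Serre lemmas (Lemmas~\ref{L:Schwartz2}, \ref{L:Cartan-Serre}, \ref{L:Cartan-Serre homotopy}) plus pseudocoherence results to get finiteness directly, with no passage through $H^i(X,\Omega^j_X)$.

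A second gap is in how you extract finite-dimensionality \emph{over $\QQ_p$} in part (a). Coherent cohomology is finite-dimensional over $K$, and the $\varphi$-fixed points of a finitely generated module over the period ring need not a priori be a finite-dimensional $\QQ_p$-space; your remark that the cokernel of $\varphi-1$ in top degree injects into $H^{2\dim X+1}(X_{\proet},\QQ_p)$ addresses only the vanishing range, not finiteness. The paper's argument here is the classification of vector bundles on the Fargues--Fontaine curve combined with Banach--Colmez dimension theory: each $H^i(X_{\proet},V)$ is a Banach--Colmez space, its $K$-dimension is invariant under algebraically closed base extension, and the explicit local invariance statement (Lemma~\ref{L:cohomology base field}, proved by a residue-map splitting on toric towers) forces that $K$-dimension to vanish, leaving a finite-dimensional $\QQ_p$-vector space and simultaneously showing $R^if_*\calF$ is a sum of copies of $\calO(0)$, i.e.\ \'etale. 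Without this (or an equivalent degree argument) your proof does not yield the $\QQ_p$-finiteness claim or the base-change clause of (a).
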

In Theorem~\ref{T:finiteness1}, part (b) may be deduced from (a) plus Tate's local duality theorem,
which implies the special case of (b) where $X$ is a point; however, our proof of (b) will not explicitly invoke Tate's theorem.
(See Theorem~\ref{T:main finiteness etale} for the proof of (b) and Theorem~\ref{T:etale cohomology over point} for the proof of (a).)

The case of Theorem~\ref{T:finiteness1} where $V$ is the trivial local system is a consequence of Scholze's results on $p$-adic comparison isomorphisms, e.g., see \cite[Corollary~1.8]{scholze2}. To a first approximation, the proof of Theorem~\ref{T:finiteness1} uses the same basic ingredients as in \cite{scholze2}, namely the \emph{pro-\'etale topology} on $X$, the presence of \emph{perfectoid spaces} in the pro-\'etale site, the \emph{perfectoid (tilting) correspondence} between these spaces and certain spaces of characteristic $p$, and the \emph{Artin-Schreier exact sequence} in the geometric setting. However, the approach of \cite{scholze2} is ultimately limited to $\QQ_p$-local systems which arise by isogeny from $\ZZ_p$-local systems; this is sufficient for local systems occurring in algebraic geometry, but in the analytic category one acquires additional local systems that do not admit a global reduction of structure from $\GL_n(\QQ_p)$ to $\GL_n(\ZZ_p)$. The first example of this is the case where $X$ is an elliptic curve with multiplicative reduction, in which case its \'etale fundamental group admits a representation into $\QQ_p^\times$ with image $p^\ZZ$ coming from the Tate uniformization.

To deal with arbitrary $\QQ_p$-local systems, we translate the theorem into a statement about \emph{relative $(\varphi, \Gamma)$-modules} on rigid analytic spaces, as developed in \cite{part1, part2}. 
Briefly put, this amounts to viewing $V$ as a module over the constant sheaf $\underline{\QQ_p}$ on the pro-\'etale site, then tensoring  with a certain \emph{$p$-adic period sheaf}
$\tilde{\bC}$ equipped with a \emph{Frobenius endomorphism $\varphi$},
to obtain a locally free sheaf $\calF$ of $\tilde{\bC}$-modules equipped with a semilinear $\varphi$-action (i.e., a \emph{relative $(\varphi, \Gamma)$-module}).
The resulting sheaf $\calF$ then sits in an Artin-Schreier sequence
\[
0 \to V \to \calF \stackrel{\varphi-1}{\to} \calF \to 0,
\]
so we may compute the cohomology of $V$ as the hypercohomology of the complex 
$\calF \stackrel{\varphi-1}{\to} \calF$. (This is analogous to Herr's computation of Galois cohomology in terms of $(\varphi, \Gamma)$-modules \cite{herr1, herr2}, except that the role of $\Gamma$ has been absorbed by the pro-\'etale site.)

In that context, it is natural to state and prove some more general results. For example, in the context of Theorem~\ref{T:finiteness1}, $\QQ_p$-local systems give rise to relative $(\varphi, \Gamma)$-modules satisfying a certain local condition (that of being \emph{\'etale}), but the proof of finite-dimensionality applies uniformly to arbitrary
relative $(\varphi, \Gamma)$-modules; it moreover can be generalized to a relative statement about higher direct images along a smooth proper morphism of rigid spaces
(Theorem~\ref{T:direct image}). 
The more general relative $(\varphi, \Gamma)$-modules are of some interest in their own right, in part because they may be interpreted as vector bundles on certain schemes or adic spaces related to the \emph{curves in $p$-adic Hodge theory} introduced by Fargues and Fontaine \cite{fargues-fontaine} and further studied in \cite{part1}. In particular, moduli spaces of such bundles are thought to give rise to the local Langlands correspondence in a fashion analogous to that seen in the case of function fields of positive characteristic; this imparts some urgency to the study of cohomology of such spaces.

In addition to constructing higher direct images, we establish their compatibility with base change (Theorem~\ref{T:main finiteness etale}) and give a relative version of the \'etale-de Rham comparison isomorphism for rigid analytic spaces constructed by Scholze \cite{scholze2} (Theorem~\ref{T:comparison isomorphism}). Note that the latter applies to $\QQ_p$-local systems, whereas the methods of \cite{scholze2} are only capable of handling $\ZZ_p$-local systems; the latter tend to arise from algebraic-geometric constructions, whereas the former typically arise from genuinely analytic constructions (e.g., the Tate uniformization of an elliptic curve, or the Gross-Hopkins period morphism).

Even with the present work, some properties of relative cohomology of $(\varphi, \Gamma)$-modules remain to be developed, notably cohomology with compact supports and sheaf duality. We defer these topics to a later occasion.

\subsection*{Acknowledgments}

During the writing of this paper,
Kedlaya was supported by NSF grant DMS-1501214,
UC San Diego (Stefan E. Warschawski Professorship), and a Guggenheim Fellowship. Liu was supported by NSFC-11571017. Thanks to David Hansen for helpful feedback.

\section{Completely continuous homomorphisms}
\label{subsec:finiteness}

We begin with a key ingredient for finiteness results: the method of Cartan-Serre for proving finiteness of cohomology, using the Schwartz lemma on completely continuous homomorphisms of Banach spaces.
We follow the treatment given by Kiehl in his proof of preservation of coherence along proper morphisms of rigid analytic spaces \cite{kiehl-finiteness}, except that we make some crucial technical modifications to avoid unwanted dependence on noetherian hypotheses (Remark~\ref{R:Kiehl def}).

\begin{hypothesis}
Throughout \S\ref{subsec:finiteness}, fix a
Banach ring $A$ according to the convention of
\cite{part1, part2}, i.e., a commutative ring complete with respect to a submultiplicative nonarchimedean absolute value and containing a topologically nilpotent unit.
We work in the category $\BMod_A$ whose objects are Banach modules over $A$ and whose morphisms are bounded $A$-linear homomorphisms;
note that these homomorphisms satisfy the open mapping theorem
\cite[Theorem~2.2.8]{part1}, so in particular every surjective morphism is strict.
\end{hypothesis}

\begin{remark} \label{R:noetherian}
If $A$ is noetherian, then any finite $A$-module is a Banach module \cite[Remark~2.2.11]{part1}; in particular, any morphism in $\BMod_A$ with finitely generated image is strict (by the open mapping theorem). By contrast, if $A$ is not noetherian, every finite $A$-module $M$ has a unique \emph{natural topology} (induced by any $A$-linear surjection from a finite free module), but need not be a Banach module for this topology;
this occurs if and only if $M$ is Hausdorff (by the open mapping theorem; see
\cite[Remark~1.2.6]{part2}).
In particular, any finite $A$-submodule of a Banach module over $A$ is itself complete for  its natural topology, although not necessarily for the subspace topology.

One important class of finite $A$-modules which are complete for the natural topology is 
the class of \emph{pseudocoherent} $A$-modules; see Definition~\ref{D:pseudocoherent}.
\end{remark}

\begin{defn}  \label{D:completely continuous}
Let $f: M \to N$ be a morphism in $\BMod_A$. We say that $f$ is \emph{completely continuous}
if there exists a sequence of finite $A$-submodules $N_i$ of $N$ such that the operator norms of the compositions $M \to N \to N/N_i$, for some fixed norms on $M$ and $N$ and the quotient seminorm on $N/N_i$, converge to 0. (This is slightly weaker than the usual definition; see Remark~\ref{R:Kiehl def}.)
\end{defn}

A basic fact about completely continuous morphisms is the Schwartz lemma (compare \cite[Satz~1.5]{kiehl-finiteness}). We will need a slightly different result to control cohomology
(see Lemma~\ref{L:Schwartz2} below), but giving the proof of this statement provides a useful illustration of technical details to appear later.
\begin{lemma} \label{L:Schwartz}
Let $f,g: M \to N$ be morphisms in $\BMod_A$ such that $f$ is surjective
and $g$ is completely continuous. Then $\coker(f+g)$ is a finite $A$-module.
(Note that this does not immediately imply that $f+g$ is strict unless $A$ is noetherian; see Remark~\ref{R:noetherian}.)
\end{lemma}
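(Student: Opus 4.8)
The plan is to follow the classical Schwartz-lemma argument, adapted to avoid noetherian hypotheses. Since $f$ is surjective, by the open mapping theorem it is strict, so there is a constant $c > 0$ such that every $n \in N$ admits a preimage $m \in M$ with $\|m\| \leq c \|n\|$. Using that $g$ is completely continuous, I would fix a finite $A$-submodule $N_0 \subseteq N$ for which the operator norm of the composite $M \xrightarrow{g} N \to N/N_0$ is strictly less than $1/c$; call this norm $\delta < 1/c$. The goal is to show that $N_0$ already surjects onto $\coker(f+g)$, i.e., that $N = (f+g)(M) + N_0$, which exhibits $\coker(f+g)$ as a quotient of the finite $A$-module $N_0$, hence finite.

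To prove $N = (f+g)(M) + N_0$, I would run the standard successive-approximation scheme. Given $n \in N$, choose $m_0 \in M$ with $f(m_0) = n$ and $\|m_0\| \leq c\|n\|$; then $n - (f+g)(m_0) = -g(m_0)$, and its image in $N/N_0$ has norm at most $\delta \|m_0\| \leq c\delta \|n\|$. So modulo $N_0$ we may correct: pick $n_1 \in N$ with $n - (f+g)(m_0) \equiv n_1 \pmod{N_0}$ and $\|n_1\|$ comparable to $c\delta\|n\|$ (here one uses that the quotient seminorm on $N/N_0$ can be approximately lifted). Iterating, choose $m_1$ with $f(m_1) = n_1$, etc. Because $c\delta < 1$, the partial sums $\sum_{j} m_j$ converge in $M$ (which is complete), say to $m$, and the construction is arranged so that $n - (f+g)(m) \in N_0$ — modulo the usual care that $N_0$, being a finite submodule of a Banach module, is complete for its natural topology even if not for the subspace topology, as flagged in Remark~\ref{R:noetherian}. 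This last point is exactly where I expect the main subtlety to lie: one must check that the limit of the corrections, which a priori lives in the closure of $N_0$ for the subspace topology, actually lies in $N_0$ itself; the resolution is that the tail sums are controlled in the quotient norm so that at each stage the residue is genuinely in $N_0$, and one takes the limit of elements of $N_0$ in its own (complete) natural topology.

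A cleaner way to package the convergence, which I would probably adopt to sidestep the seminorm-lifting bookkeeping, is to first reduce to the case where $N_0 = 0$ after passing to the quotient: replace $N$ by $N/N_0$ and $f, g$ by their composites with the projection. Then $f$ remains surjective (being a composite of surjections) and strict, $\bar g$ has operator norm $< 1/c$ for the quotient seminorm, and one shows directly that $f + \bar g \colon M \to N/N_0$ is surjective by a geometric-series argument: $f + \bar g = f \circ (\id + f^{-1} \bar g)$ in a suitable approximate sense, or more precisely, given $\bar n$ one solves $f(m_k) + \bar g(m_{k-1}) = $ (residual) recursively with geometrically decaying residuals. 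Surjectivity of $f + \bar g$ onto $N/N_0$ says precisely $N = (f+g)(M) + N_0$, completing the argument. The only genuinely non-formal inputs are the open mapping theorem (to get the constant $c$ and strictness of $f$) and completeness of $M$; everything else is the contraction-mapping estimate $c\delta < 1$. I do not expect to need strictness of $f+g$ anywhere, consistent with the parenthetical warning in the statement.
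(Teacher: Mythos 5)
Your argument is essentially the paper's proof: the same successive-approximation scheme driven by the contraction estimate $c\delta<1$ (section of $f$ from the open mapping theorem, finite submodule $N_0$ from complete continuity), the same conclusion $N=(f+g)(M)+N_0$, and the same appeal to completeness of $N_0$ for its natural topology to place the summed corrections in $N_0$ rather than merely its closure; the paper's only extra wrinkle is the auxiliary finite free module $U$ accommodating a finite nonzero $\coker(f)$, which Remark~\ref{R:Schwartz trick} explains is purely expository and is vacuous under your hypotheses. One caution: your proposed ``cleaner packaging'' through $N/N_0$ is actually the weaker route, since a limiting argument in the quotient \emph{seminorm} only shows the final residual has seminorm zero, i.e., lies in the closure of $N_0$ in $N$ (which need not be a finite module); this is exactly why the paper, like your first and more careful version, tracks the corrections $v_i\in N_0$ at each finite stage and sums them inside $N_0$ itself.
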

\begin{proof}
Choose a finite free $A$-module $U$ admitting an $A$-linear morphism $\pi': U \to N$ such that $U \stackrel{\pi'}{\to} N \to \coker(f)$ is surjective.
Fix norms on $M,N,U$ and a real number $t>1$.
By the open mapping theorem, $f \oplus \pi': M \oplus U \to N$ admits a bounded set-theoretic section $s \oplus s': N \to M \oplus U$; let $c$ be the operator norm of $s$.
By Remark~\ref{R:noetherian}, 
Since $g$ is completely continuous, we can find a finite $A$-submodule $V$ of $N$ such that for the quotient seminorm on $N/V$, the
composition of $g$ with the projection $\pi'': N \to N/V$ has operator norm at most $t^{-2} c^{-1}$. 
The map $\pi$ admits a set-theoretic section $s'': N/V \to N$ which is bounded of norm at most $t$.

For $n \in N$, define $m_i \in M, u_i \in U, v_i \in V$ for $i=0,1,\dots$ recursively as follows. First set $n_0 := n$. Given $n_i$, put
\[
(m_i, u_i) := (s \oplus s')(n_i), \qquad
v_i := ((1 - s'' \circ \pi'') \circ g)(m_i), \qquad
n_{i+1} := (s'' \circ \pi'' \circ g)(m_i).
\]
We then have $\left|n_{i+1}\right| \leq t^{-1} \left| n_i \right|$, 
so the $n_i$ converge to 0; since all maps in the construction are bounded, the $m_i, u_i, v_i$ also converge to 0. By construction,
\[
(f+g)(m_i) + \pi(u_i) = n_i + g(m_i) = n_i + v_i - n_{i+1};
\]
if we sum this relation over $i$ and set $m := \sum_{i=0}^\infty m_i$, 
$u = \sum_{i=0}^\infty u_i$, $v = \sum_{v=0}^\infty v_i$, we have
\[
(f+g)(m) + \pi(u) = n + v.
\]
Morover, $v \in N$ belongs to $V$ because the latter is complete by Remark~\ref{R:noetherian}.
It follows that $V \to \coker(f+g)$ is surjective,
so $\coker(f+g)$ is a finite $A$-module. 
\end{proof}

\begin{remark} \label{R:Schwartz trick}
The usual statement of Lemma~\ref{L:Schwartz} requires $f$ to be surjective, rather than allowing the cokernel to be nonzero but finitely generated; this is the level of generality needed in most applications, including ours. In fact, the formally stronger assertion is itself an immediate consequence of the surjective case, as one may replace the original maps $f, g: M \to N$ by maps $f \oplus \pi, g \oplus 0: M \oplus U \to N$
where $\pi: U \to N$ is an $A$-linear morphism from a finite free $A$-module $U$ such that the composition $U \stackrel{\pi}{\to} N \to \coker(f)$ is surjective.

It is therefore not an obvious design choice to proceed as we have, by bundling the finite cokernel into the proof of Lemma~\ref{L:Schwartz}. The justification for this choice is to illustrate the fact that the constant $c$ used to choose the subspace $V$ depends only on $s$, not on $s'$. In the context of Lemma~\ref{L:Schwartz} this distinction is immaterial, but we use a similar argument to crucial effect in the proof of Lemma~\ref{L:Schwartz2}, and it is the latter statement that is needed for cohomological applications via Lemma~\ref{L:Cartan-Serre}.
\end{remark}

\begin{remark} \label{R:composition}
Let $f: M \to N$ be a morphism in $\BMod_A$. If $f$ is completely continuous, then it is obvious that the precomposition of $f$ with any morphism $g: M' \to M$ in $\BMod_A$ is again completely continuous. Conversely, if $g$ is a (necessarily strict) surjection and 
$f \circ g$ is completely continuous, then so is $f$.

In the other direction, if $f$ is completely continuous, then the postcomposition of $f$ with a morphism $g: N \to N'$ in $\BMod_A$ is again completely continuous. However, if $g$ is a strict inclusion and $g \circ f$ is completely continuous, then it is not obvious that $f$ is completely continuous unless $g$ splits in $\BMod_A$, as otherwise it is not clear how to ``project'' finite $A$-submodules of $N'$ to finite $A$-submodules of $N$ in a sufficiently uniform way.
\end{remark}

\begin{remark} \label{R:fingen cc}
Let $R \to S$ be a bounded morphism of Banach algebras over $A$ which is completely continuous as a morphism in $\BMod_A$. Let $M$ be a finite Banach module over $R$ such that $M \otimes_R S$ is a Banach module over $S$. For $F \to M$ an $R$-linear surjection from a finite free $R$-module, it is obvious that $F \to F \otimes_R S$ is completely continuous, as then is $F \to F \otimes_R S \to M \otimes_R S$. Refactoring this composition as $F \to M \to M \otimes_R S$, we may remove the surjection $F \to M$ as per Remark~\ref{R:composition} to deduce that $M \to M \otimes_R S$ is again completely continuous.
\end{remark}

The crucial example of Remark~\ref{R:fingen cc} in \cite{kiehl-finiteness} involves the following class of morphisms.
\begin{example}  \label{exa:Tate completely continuous}
For $i=1,\dots,n$, choose real numbers $r_i, r'_i, s_i, s'_i$ such that $0 < s_i < s'_i \leq r'_i < r_i$. Then the natural inclusions
\begin{gather*}
A \{T_1/r_1,\dots,T_n/r_n\} \to A\{T_1/r'_1,\dots,T_n/r'_n\} \\
A \{s_1/T_1, T_1/r_1,\dots,s_n/T_n, T_n/r_n\} \to A\{s'_1/T_1, T_1/r'_1,\dots,s'_n/T_n, T_n/r'_n\}
\end{gather*}
are strictly completely continuous, taking the submodules generated by the initial segments of an enumeration of monomials.
\end{example}

In light of the previous remark, we formulate the following variant of Lemma~\ref{L:Schwartz}.
\begin{lemma} \label{L:Schwartz2}
Let $f,g: M \to N, h: N \to N'$ be morphisms in $\BMod_A$ such that
$\coker(f)$ is a finite $A$-module, $h \circ g$ is completely continuous, and $h$ is a strict inclusion. Then $\coker(f+g)$ is contained in a finite $A$-module.
\end{lemma}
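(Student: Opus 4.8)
The plan is to adapt the iterative Cartan--Serre argument used to prove Lemma~\ref{L:Schwartz}. Since $\coker(f)$ is a finite $A$-module, choose a finite free $A$-module $U$ and an $A$-linear map $\pi\colon U \to N$ with $f \oplus \pi\colon M \oplus U \to N$ surjective; fix norms on all modules in sight and a real number $t>1$, and pick (open mapping theorem) a bounded set-theoretic section $s \oplus s'\colon N \to M \oplus U$, writing $c$ for the operator norm of $s$. As stressed in Remark~\ref{R:Schwartz trick}, $c$ depends only on $s$ and not on $s'$, so the finite submodule chosen next --- whose adequacy is governed by $c$ --- may be selected before $U$ is finalized, and $U$ may afterwards be enlarged (extending $s \oplus s'$ by zero on the new summand) without altering $c$. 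Using that $h\circ g$ is completely continuous, choose a finite $A$-submodule $V'$ of $N'$ such that the composite $M \xrightarrow{h\circ g} N' \to N'/V'$ has operator norm smaller than $t^{-2}c^{-1}$ up to a constant accounted for below, together with a bounded set-theoretic section of $N' \to N'/V'$ of norm at most $t$.

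Identify $N$ with $h(N) \subseteq N'$, using the restriction of the norm of $N'$ (legitimate as $h$ is strict). One now runs the recursion of Lemma~\ref{L:Schwartz}, with the crucial difference that the error $g(m_i) \in N$ produced at the $i$-th step is controlled only in the quotient $N'/V'$, whereas the recursion must remain inside $N$ so that $s \oplus s'$ can be reapplied. The heart of the matter --- and the step I expect to be the main obstacle --- is therefore to bound the distance from an element $y \in N$ to the submodule $V' \cap N$, measured with the norm of $N$, by a fixed multiple of the distance from $y$ to $V'$ measured in $N'$; equivalently, to show that the natural injection $N/(V'\cap N) \hookrightarrow N'/V'$ is strict. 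I anticipate that this follows from strictness of $h$ together with the finiteness of $V'$ and the open mapping theorem --- concretely, that $V'\cap N$ is closed in $N$ and that $N \to (N + V')/V'$ is a strict surjection --- possibly after enlarging $V'$ within the family of finite submodules supplied by complete continuity of $h\circ g$. This is precisely the point at which the hypotheses that $h$ is a strict inclusion and that $h\circ g$ (as opposed to $g$) is completely continuous genuinely enter, in the spirit of Remark~\ref{R:composition}.

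Granting this, the recursion runs as in Lemma~\ref{L:Schwartz}: for $n \in N$ put $n_0 := n$, $(m_i, u_i) := (s\oplus s')(n_i)$, choose $v_i \in V' \cap N$ nearly realizing the distance from $g(m_i)$ to $V'\cap N$, and set $n_{i+1} := v_i - g(m_i)$, so that $\left| n_{i+1} \right| \leq t^{-1}\left| n_i \right|$; hence $n_i \to 0$ and then $m_i, u_i, v_i \to 0$, and telescoping the identities $(f+g)(m_i) + \pi(u_i) = n_i + v_i - n_{i+1}$ yields $(f+g)(m) + \pi(u) = n + v$ with $m := \sum_i m_i$, $u := \sum_i u_i$, $v := \sum_i v_i$, where $v \in V'\cap N$ by completeness of $V'$ (Remark~\ref{R:noetherian}), handled exactly as in Lemma~\ref{L:Schwartz}, together with the fact that $v$ lies in $N$ because all the other terms do. Thus $N = \image(f+g) + (\hat W \cap N)$, where $\hat W := \image(\pi) + V'$ is a finite $A$-submodule of $N'$ and $\hat W \cap N = \image(\pi) + (V'\cap N)$; consequently
\[
\coker(f+g) \;\cong\; (\hat W \cap N)/\bigl(\hat W \cap \image(f+g)\bigr) \;\hookrightarrow\; \hat W/\bigl(\hat W \cap \image(f+g)\bigr),
\]
and the target, being a quotient of the finite $A$-module $\hat W$, is finite. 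This realizes $\coker(f+g)$ as a submodule of a finite $A$-module, as required. One cannot, in general, strengthen this to finiteness of $\coker(f+g)$: when $A$ is not noetherian, $\hat W \cap N$ and $\hat W \cap \image(f+g)$ need not be finitely generated, which is exactly why the conclusion here is weaker than in Lemma~\ref{L:Schwartz}.
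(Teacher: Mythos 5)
Your argument turns on the claim that, for the chosen finite submodule $V'$ of $N'$, one can bound $\operatorname{dist}_N(y, V'\cap N)$ by a fixed multiple of $\operatorname{dist}_{N'}(y,V')$ for $y \in N$ --- equivalently, that $N/(V'\cap N) \hookrightarrow N'/V'$ is strict with a constant that can be absorbed into the operator-norm bound $t^{-2}c^{-1}$. This is exactly where the proposal breaks. Take $A = \QQ_p$, $N' = \QQ_p^2$ with the supremum norm, $N = \QQ_p \times \{0\}$ (a strict closed inclusion), and $V' = \QQ_p\cdot(1,\epsilon)$ with $0 < \left|\epsilon\right| < 1$. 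Then $V' \cap N = 0$, and $y = (1,0)$ has $\operatorname{dist}_{N'}(y,V') \leq \left|\epsilon\right|$ while $\operatorname{dist}_N(y, V'\cap N) = 1$: the comparison constant is at least $\left|\epsilon\right|^{-1}$, hence depends on $V'$ and can be arbitrarily large. (In worse cases strictness fails outright, since by Remark~\ref{R:noetherian} the submodule $V'$ need not be closed in $N'$ and $N+V'$ need not be closed either.) This creates a genuine circularity: complete continuity lets you prescribe the operator norm of $M \to N'/V'$ only \emph{before} seeing which $V'$ you get, whereas the norm you need depends on the position of $V'$ relative to $N$. Enlarging $V'$ within the family supplied by complete continuity does not help, as that family carries no compatibility with $N$. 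This is precisely the kind of dependence that Remark~\ref{R:Schwartz trick} warns must be avoided.

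The paper's proof evades the issue by never forcing the iterates back into $N$. It sets $N''$ to be the preimage in $N'$ of the image of $V$ in $N'/N$ (a finite $A$-module, so $N''/N$ is finite and $N''$ is again a Banach module), extends $s\colon N \to M$ to a set-theoretic map $N'' \to M$ of operator norm at most $ct$ by means of a norm-$t$ section of $N'' \to N''/N$ --- all constants here are fixed before $V$ is chosen, with the operator-norm target for $M \to N'/V$ taken to be $t^{-3}c^{-1}$ --- and runs the recursion with $n_i \in N''$: since $n_{i+1} = g(m_i) - v_i$ with $g(m_i) \in N$ and $v_i \in V$, the iterates stay in $N''$ automatically. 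This yields finiteness of $\coker(f+g\colon M \to N'')$, and the exact sequence $0 \to \coker(f+g\colon M \to N) \to \coker(f+g\colon M \to N'') \to N''/N \to 0$ then gives the stated containment. Your final observation about why the conclusion cannot be upgraded to finiteness of $\coker(f+g)$ is sound, but the core of the argument needs to be restructured along these lines.
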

\begin{proof}
To simplify notation slightly, we proceed as in Remark~\ref{R:Schwartz trick} to formally reduce to the case where $f$ is surjective.
Fix norms on $M,N,N'$ and a real number $t>1$.
By the open mapping theorem, $f$ admits a bounded set-theoretic section $s: N \to M$; let $c$ be the operator norm of $s$.
By Remark~\ref{R:noetherian}, 
Since $h \circ g$ is completely continuous, we can find a finite $A$-submodule $V$ of $N'$ such that for the quotient seminorm on $N'/V$, the
composition of $h \circ g$ with the projection $\pi'': N' \to N'/V$ has operator norm at most $t^{-3} c^{-1}$. 
The map $\pi''$ admits a set-theoretic section $s'': N'/V \to N'$ which is bounded of norm at most $t$.

Since $V$ is a finite $A$-module, its image in $N'/N$ is again a finite $A$-module,
and hence a Banach module (Remark~\ref{R:noetherian}); let $N''$ be the inverse image of this module in $N'$, which is then also a Banach module.
Choose a finite free $A$-module $U$ and an $A$-linear morphism $\pi: U \to N''$
such that $U \stackrel{\pi}{\to} N'' \to N''/N$ is surjective.
Choose a set-theoretic section $s'''$ of the projection $\pi''': N'' \to N''/N$ of operator norm at most $t$ taking 0 to 0. By composing $1 - s''' \circ \pi''': N'' \to N$ with the previously chosen section $s: N \to M$, we obtain a set-theoretic map $s: N'' \to M$
of operator norm at most $ct$ extending $s$, which can be complemented to a bounded set-theoretic section $s \oplus s': N'' \to M \oplus U$ of the surjective map $f \oplus \pi$. (We cannot control the norm of $s'$, but as per Remark~\ref{R:Schwartz trick} this will have no effect on the sequel.)

We now emulate the proof of Lemma~\ref{L:Schwartz}.
For $n \in N''$, define $m_i \in M, u_i \in U, v_i \in V$ for $i=0,1,\dots$ recursively as follows. First set $n_0 := n$. Given $n_i$, put
\[
(m_i, u_i) := (s \oplus s')(n_i), \qquad
v_i := ((1 - s'' \circ \pi'') \circ g)(m_i), \qquad
n_{i+1} := (s'' \circ \pi'' \circ g)(m_i).
\]
We then have $\left|n_{i+1}\right| \leq t^{-1} \left| n_i \right|$, 
so the $n_i$ converge to 0; since all maps in the construction are bounded, the $m_i, u_i, v_i$ also converge to 0. By construction,
\[
(f+g)(m_i) + \pi(u_i) = n_i + g(m_i) = n_i + v_i - n_{i+1};
\]
if we sum this relation over $i$ and set $m := \sum_{i=0}^\infty m_i$, 
$u = \sum_{i=0}^\infty u_i$, $v = \sum_{v=0}^\infty v_i$, we have
\[
(f+g)(m) + \pi(u) = n + v.
\]
Moreover, $v \in N'$ belongs to $V$ because the latter is complete by Remark~\ref{R:noetherian}.
It follows that the map $U \oplus V \to \pi(U) \oplus V \to \coker(f+g: M \to N'')$ is surjective, and hence $\coker(f+g: M \to N'')$ is a finite $A$-module.
From the exact sequence
\[
0 \to \coker(f+g: M \to N) \to \coker(f+g: M \to N'') \to N''/N \to 0,
\]
we deduce the claim.
\end{proof}

The following consequence of Lemma~\ref{L:Schwartz2} is essentially \cite[Satz~2.5]{kiehl-finiteness}, except with weaker hypotheses (see Remark~\ref{R:Kiehl def}).
\begin{lemma}[after Cartan-Serre] \label{L:Cartan-Serre}
Let $f^\bullet: C_1^{\bullet} \to C_2^{\bullet}$ be a morphism of complexes in $\BMod_A$. 
Suppose that for some $i$, the following conditions hold.
\begin{enumerate}
\item[(a)]
The morphism $f^i$ is completely continuous.
\item[(b)]
The map $H^i(C_1) \to H^i(C_2)$ induced by $f^\bullet$ has cokernel which is a finite $A$-module.
\end{enumerate}
Then the group $H^i(C_2)$ is contained in a finite $A$-module. In particular, if $f^\bullet$ is a quasi-isomorphism, then $H^i(C_1)$ is contained in a finite $A$-module.
\end{lemma}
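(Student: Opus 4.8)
The plan is to apply Lemma~\ref{L:Schwartz2} to a quadruple of maps assembled from the degree-$i$ cocycles of $C_1^\bullet$ and $C_2^\bullet$, rigged so that $\coker(f+g)$ is exactly $H^i(C_2)$, while $\coker(f)$ is the finite module furnished by hypothesis~(b) and $h\circ g$ inherits complete continuity from $f^i$ via Remark~\ref{R:composition}.

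Concretely, I would write $Z_j$ for the degree-$i$ cocycles of $C_j^\bullet$ (i.e.\ the kernel of $C_j^i \to C_j^{i+1}$) and $B_j \subseteq Z_j$ for the degree-$i$ coboundaries, for $j=1,2$; each $Z_j$ is a closed $A$-submodule of the Banach module $C_j^i$, hence a Banach module in the subspace norm, and $H^i(C_j) = Z_j/B_j$. The chain-map identity forces $f^i(Z_1) \subseteq Z_2$. I then set
\[
M := Z_1 \oplus C_2^{i-1}, \qquad N := Z_2, \qquad N' := C_2^i,
\]
with $h\colon N \hookrightarrow N'$ the inclusion (a strict monomorphism, being the inclusion of a closed submodule with its subspace topology), and define bounded $A$-linear maps
\[
f\colon M \to N,\quad (x,y)\mapsto f^i(x) + d(y), \qquad\qquad g\colon M \to N,\quad (x,y)\mapsto -f^i(x),
\]
where $d\colon C_2^{i-1}\to C_2^i$ is the differential (so $f$ indeed lands in $Z_2$, using $d^2=0$ and $f^i(Z_1)\subseteq Z_2$).

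Next I would verify the three hypotheses of Lemma~\ref{L:Schwartz2}. The image of $f$ is $f^i(Z_1) + B_2$ inside $Z_2$, so $\coker(f) = Z_2/(f^i(Z_1)+B_2) = \coker\bigl(H^i(C_1)\to H^i(C_2)\bigr)$, which is a finite $A$-module by~(b). The composite $h\circ g$ is $-f^i$ precomposed with the projection $M \to Z_1$ followed by the inclusion $Z_1 \hookrightarrow C_1^i$, both morphisms in $\BMod_A$; since $f^i$ is completely continuous by~(a), Remark~\ref{R:composition} shows $h\circ g$ is completely continuous. As $h$ is a strict inclusion, Lemma~\ref{L:Schwartz2} applies and $\coker(f+g)$ is contained in a finite $A$-module. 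But $(f+g)(x,y) = d(y)$, so $\coker(f+g) = Z_2/B_2 = H^i(C_2)$, which is the first assertion. For the final sentence: if $f^\bullet$ is a quasi-isomorphism then $H^i(C_1)\to H^i(C_2)$ is an isomorphism, so~(b) holds with vanishing cokernel, and then $H^i(C_2)$ — hence the isomorphic $H^i(C_1)$ — lies in a finite $A$-module.

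The only real subtlety, and the reason Lemma~\ref{L:Schwartz} alone does not suffice, is that complete continuity of $f^i\colon C_1^i \to C_2^i$ does not obviously pass to the restriction $Z_1 \to Z_2$, because the inclusion $Z_2 \hookrightarrow C_2^i$ need not split in $\BMod_A$ (cf.\ the last paragraph of Remark~\ref{R:composition}). Routing the complete-continuity hypothesis through the auxiliary strict inclusion $h$ is precisely what Lemma~\ref{L:Schwartz2} was designed to tolerate, so once the bookkeeping above is in place there is no further obstacle.
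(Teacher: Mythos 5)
Your proof is correct and is essentially identical to the paper's own argument: the paper applies Lemma~\ref{L:Schwartz2} to exactly the same maps $f(a,b)=f^i(a)+d_2^{i-1}(b)$ and $g(a,b)=-f^i(a)$ on $Z^i_1\oplus C_2^{i-1}\to Z^i_2$, with $h$ the inclusion $Z^i_2\hookrightarrow C_2^i$, and identifies $\coker(f+g)$ with $H^i(C_2)$. Your closing remark about why the strict inclusion $h$ is needed matches the motivation given in Remark~\ref{R:composition} and Remark~\ref{R:Schwartz trick}.
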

\begin{proof}
Put $Z^i_j := \ker(d^i_j: C^i_j \to C^{i+1}_j)$.
By (a) and (b) and Remark~\ref{R:composition}, the map
\[
Z^i_1 \oplus C_2^{i-1} \to Z^i_2, \qquad 
(a,b) \mapsto -f^i(a)
\]
is the composition of a completely continuous morphism with a strict inclusion,
while the map
\[
Z^i_1 \oplus C_2^{i-1} \to Z^i_2, \qquad 
(a,b) \mapsto f^i(a) + d_2^{i-1}(b)
\]
has cokernel which is a finite $A$-module.
We may identify $H^i(C_2)$ with the cokernel of the map
\[
Z^i_1 \oplus C_2^{i-1} \to Z^i_2, \qquad 
(a,b) \mapsto d_2^{i-1}(b),
\]
and by Lemma~\ref{L:Schwartz2} this cokernel is contained in a finite $A$-module.
This proves the claim.
\end{proof}

\begin{remark} \label{R:Cartan-Serre use case}
Our initial applications of Lemma~\ref{L:Cartan-Serre} will occur in the following context: we have a bounded homomorphism $R \to S$ of Banach algebras over $A$ which is completely continuous in $\BMod_A$ and a complex $C_1^\bullet$ of finite Banach modules over $R$ such that each module $C_2^\bullet = C_1^\bullet \otimes_R S$ is complete and the induced map
$C_1^\bullet \to C_2^\bullet$ is a quasi-isomorphism.
In this setting, 
condition (a) of Lemma~\ref{L:Cartan-Serre} will be satisfied thanks to
Remark~\ref{R:fingen cc}.

Our core application of Lemma~\ref{L:Cartan-Serre} will be in a somewhat more complicated setting. See Remark~\ref{R:Cartan-Serre use case2}.
\end{remark}

\begin{remark} \label{R:Kiehl def}
We say that the morphism $f: M \to N$ in $\BMod_A$ is \emph{completely continuous in the sense of Kiehl} (i.e., \emph{vollst\"andig stetig} in the sense of \cite[Definition~1.1]{kiehl-finiteness})
if it can be uniformly approximated by a sequence of morphisms $f_i: M \to N$ whose images are finite $A$-modules. This implies that $f$ is completely continuous in the sense of
Definition~\ref{D:completely continuous} by taking the submodules of $N$ to be the images of the $f_i$. The converse implication is not clear in general, but it does hold when $A$ is a nonarchimedean field:
the projection $\pi_i: N \to N/N_i$ admits an $A$-linear section $s$ whose operator norm is bounded uniformly in $i$ (e.g., because \cite[Lemma~1.3.7]{kedlaya-course} allows the operator norm to be made arbitrarily close to 1), and the sequence of maps
$f_i = (1 - s \circ \pi_i) \circ f$ then has the desired effect.
It also holds if $M$ is \emph{topologically projective}, i.e., it is a direct summand of the completion of a free $A$-module with respect to the supremum norm; in this case, one may construct the maps $f_i$ by assigning images to topological generators. The latter observation has the net effect of minimizing any substantive difference between our definition and Kiehl's definition, and also explains why various intermediate results in \cite{kiehl-finiteness} require precomposition with a surjection from a topologically free module.

However, there is no exact analogue in \cite{kiehl-finiteness} of Lemma~\ref{L:Schwartz2}.
Instead, Kiehl formulates in \cite[Definition~1.1]{kiehl-finiteness} a stronger version of the complete continuity condition, defining what it means for a morphism $f$ to be \emph{strictly completely continuous} (\emph{streng vollst\"andig stetig}) by imposing a certain uniformity condition on the maps $f_i$. The purpose of this stronger condition is to make it possible to deduce strict complete continuity after postcomposition with a strict inclusion. Unfortunately, Kiehl is only able to achieve this implication in case $A$ is an affinoid algebra over a nonarchimedean field \cite[Satz~1.4]{kiehl-finiteness},
because the argument requires a very strong noetherian property which is only verified in this case \cite[Satz~5.1]{kiehl-finiteness}: for $A_0$ a ring of definition of $A$,
every $A_0$-submodule of every finite $A_0$-module is \emph{almost finitely generated} in the sense of almost ring theory. While it is reasonable to expect this for other strongly noetherian Banach rings, such as the coordinate rings of affinoid subsets of Fargues-Fontaine curves \cite{kedlaya-noetherian}, it is immaterial for our purposes: if $A$ is noetherian, then the conclusion of Lemma~\ref{L:Cartan-Serre} promotes to the statement that $H^i(C_2)$ is a finite $A$-module.
Moreover, in our application of Lemma~\ref{L:Cartan-Serre} to the proof of Proposition~\ref{P:direct image finiteness}, the base ring $A$ will not be noetherian;
this means that we will have to use extra structure on cohomology, plus some deep finiteness results from \cite{part2}, in order to promote the conclusion (that cohomology groups are contained in finite $A$-modules) to stronger finite generation assertions.

In any case, we leave it as an easy exercise to recover the main theorem of \cite{kiehl-finiteness}, with an arguably simpler proof, using the lemmas stated above.
\end{remark}

\section{Totalizations}\label{S:totalization}

As much of this paper is concerned with matters of homological algebra, it is best to review some basic formalism in order to fix notation and conventions, especially with regard to converting multidimensional complexes into single complexes;
this allows us to state a crucial formal promotion of Lemma~\ref{L:Cartan-Serre}.
Throughout \S\ref{S:totalization}, fix an exact additive category $\calA$.

\begin{defn}
Let $C^+(\calA)$ be the category of bounded below complexes in $\calA$ with cohomological indexing, so that for $C \in C^+(\calA)$, the differential $d_C^i$ maps from $C^i$ to $C^{i+1}$; recall that a morphism $f^\bullet: C_1 \to C_2$ in $C^+(\calA)$
consists of the vertical arrows in a commutative diagram of the form
\[
\xymatrix{
\cdots \ar[r] & C_1^{i-1} \ar^{d_{C_1}^{i-1}}[r] \ar^{f^{i-1}}[d] & C_1^i \ar^{d_{C_1}^i}[r] \ar^{f^i}[d] & \cdots \\
\cdots \ar[r] & C_2^{i-1} \ar^{d_{C_2}^{i-1}}[r]  & C_2^i \ar^{d_{C_2}^i}[r] & \cdots.
}
\]
Such a morphism is \emph{null-homotopic} if there exist maps $h^i: C_1^i \to C_2^{i-1}$ such that
\begin{equation} \label{eq:homotopy}
f^i = d_{C_2}^{i-1} \circ h^i + h^{i+1} \circ d_{C_1}^i;
\end{equation}
this implies that $f$ induces zero maps $h^\bullet(C_1) \to h^\bullet(C_2)$ of cohomology groups. (Note that \eqref{eq:homotopy} alone implies that $f$ is a morphism in $C^+(\calA)$, i.e., that the diagram commutes.)
One checks that the morphisms which are null-homotopic form an ideal under composition; quotienting the morphism spaces by this ideal yields the \emph{homotopy category} $K^+(\calA)$.
\end{defn}

\begin{defn} \label{D:mapping cone}
For $f: C_1 \to C_2$ a morphism in $C^+(\calA)$, the \emph{mapping cone} is the object $\Cone(f) \in C^+(\calA)$ such that $\Cone(f)^i = C_1^i \oplus C_2^{i-1}$
and
\[
d_{\Cone(f)}^i(x, y) = (d_{C_1}^i(x), d_{C_2}^{i-1}(y) + (-1)^i f^i(x));
\]
there are obvious morphisms $C_2[-1] \to \Cone(f) \to C_1$ which compose to zero (and close up into a \emph{distinguished triangle}). The class of $\Cone(f)$ in $K^+(\calA)$ is  functorially determined by the class of $f$ in $K^+(\calA)$.

Suppose that $g: C_0 \to C_1$ is a second morphism in $C^+(\calA)$ such that the composition $C_0 \to C_1 \to C_2$ is null-homotopic as witnessed by $h^i: C_0^i \to C_2^{i-1}$. We may then construct a morphism $C_0 \to \Cone(f)$ in $C^+(\calA)$ 
mapping $C_0^i$ to $C_1^i \oplus C_2^{i-1}$ via $g^i \oplus - h^i$.
The class of $C_0 \to \Cone(f)$ in $K^+(\calA)$ is functorially determined by the class of $g$ in $K^+(\calA)$. 
\end{defn}

\begin{remark} \label{R:mapping cone func}
The second observation in Definition~\ref{D:mapping cone} may be restated as follows.
Let
\[
\xymatrix{
C_1 \ar^{f}[r] \ar^{e_1}[d] & C_2 \ar^{e_2}[d] \\
D_1 \ar^g[r] & D_2
}
\]
be a diagram in $C^+(\calA)$ which commutes in $K^+(\calA)$, and choose a homotopy 
$h^\bullet: C_1^\bullet \to D_2^{\bullet - 1}$ witnessing the vanishing of $e_2 \circ f - g \circ e_1$ in $K^+(\calA)$. Then 
the induced morphism $\Cone(f) \to \Cone(g)$ can be represented by a morphism of complexes whose component $C_1^i \oplus C_2^{i-1} \to D_1^i \oplus D_2^{i-1}$ acts as
\[
(a,b) \mapsto (e_1^i(a), e_2^{i-1}(b)+(-1)^{i+1}h^i(a)).
\]
In particular, this map depends on $h$.

This observation has the following consequence in our setting. Take $\calA = \BMod_A$ for $A$ a Banach ring. If $e_1, e_2$ consist of completely continuous morphism, it does not follow that the resulting map $\Cone(f) \to \Cone(g)$ has the same property unless we can ensure that the homotopy $h$ is itself completely continuous. The easiest way to achieve this is to ensure that in fact $h=0$, which is to say the original diagram commutes already in $C^+(\calA)$; fortunately, this is what will ultimately happen in the case of interest.
\end{remark}

\begin{defn}\label{D: totalization}
Let $C_{\bullet}:=C_0 \to \cdots \to C_n$ be a finite sequence in $C^+(\calA)$ that represents a complex in $K^+(\calA)$; that is, the compositions $C_{i-1}\to C_{i}\to C_{i+1}$ are null-homotopic for all $i$. We now repeat the following operation: take the last two terms $C_{n-1} \to C_n$ and replace them by the single term $\Cone(C_{n-1} \to C_n)$. After $n$ steps, we end up with an element of $C^+(\calA)$ whose class in $K^+(\calA)$ is uniquely determined by the image of $C_{\bullet}$ in the category of bounded complexes in $K^+(\calA)$. We call this element the \emph{totalization} of $C_{\bullet}$, and denote it by $\Tot(C_{\bullet})$.
\end{defn}

\begin{remark}\label{R:double-totalization}
In the case that the sequence $C_{\bullet}$ is indeed a complex in $C^+(\calA)$ (i.e., the compositions $C_{i-1} \to C_i \to C_{i+1}$ are actually zero, not merely null-homotopic),  the totalization of $C_{\bullet}$ is represented by the total complex of the double complex $C^{i,j}:=C_j^i$.  
\end{remark}

\begin{remark}\label{R:morphism-totalization}
For $i = 1, 2$, let $C_{i, \bullet} := C_{i,0} \to \cdots \to C_{i,n}$ be a sequence in $C^+(\calA)$ representing a complex in $K^+(\calA)$. Let $f_{\bullet}: C_{1, \bullet} \to C_{2, \bullet}$ be a family of morphisms in $C^+(\calA)$ such that for each $j$, the diagram
\[
\xymatrix{
C_{1,j-1} \ar^{f_{j-1}}[d] \ar[r] & C_{1,j} \ar^{f_j}[d] \ar[r] & C_{1,j+1} \ar^{f_{j+1}}[d] \\
C_{2,j-1} \ar[r] & C_{2,j} \ar[r] & C_{2,j+1}
}
\]
commutes in $C^+(\calA)$, not only in $K^+(\calA)$. Moreover, suppose that one can choose homotopies witnessing that each row is a complex in $K^+(\calA)$ which themselves form commutative diagrams with the $f_j$. It is straightforward to see that the family $f_\bullet$ naturally induces a morphism $\Tot(f_\bullet):\Tot(C_{1,\bullet})\to\Tot(C_{2,\bullet})$ in $C^+(\calA)$.
\end{remark}

Unfortunately, it is unclear how to formulate an analogue of Lemma~\ref{L:Cartan-Serre} which up to full homotopy equivalence. Instead, we give a more rigid statement which provides just enough flexibility for our purposes.
\begin{lemma} \label{L:Cartan-Serre homotopy}
Let $A$ be a Banach ring and 
let $n$ be a nonnegative integer.
For $i = 1, 2$, let $C_{i, \bullet} := C_{i,0} \to \cdots \to C_{i,n}$ be a sequence in $C^+(\BMod_A)$ representing a complex in $K^+(\BMod_A)$. Let $f_{\bullet}: C_{1, \bullet} \to C_{2, \bullet}$ be a family of morphisms in $C^+(\BMod_A)$.
For each $0\leq j\leq n$, let $\alpha_{i,j}: D_{i,j} \to C_{i,j}$,
$\beta_{i,j}: C_{i,j} \to D_{i,j}$ be morphisms in $C^+(\BMod_A)$ which are inverses to each other in $K^+(\BMod_A)$.
Suppose that the following conditions are satisfied.
\begin{enumerate}
\item[(a)]
For each $j$, the diagram
\[
\xymatrix{
C_{1,j-1} \ar^{f_{j-1}}[d] \ar[r] & C_{1,j} \ar^{f_j}[d] \ar[r] & C_{1,j+1} \ar^{f_{j+1}}[d] \\
C_{2,j-1} \ar[r] & C_{2,j} \ar[r] & C_{2,j+1}
}
\]
commutes in $C^+(\BMod_A)$. Moreover, one can choose homotopies witnessing that each row is a complex in $K^+(\BMod_A)$ which themselves form commutative diagrams with the $f_j$.
\item[(b)]
For each $j$, there exists a morphism $g_j: D_{1,j} \to D_{2,j}$ in $C^+(\BMod_A)$
consisting of completely continuous morphisms in $\BMod_A$ such that the diagram
\[
\xymatrix{
D_{1,j} \ar^{g_j}[d] \ar^{\alpha_{1,j}}[r] & C_{1,j} \ar^{f_j}[d] \ar^{\beta_{1,j}}[r] & D_{1,j} \ar^{g_j}[d] \ar^{\alpha_{1,j}}[r] & C_{1,j} \ar^{f_j}[d] \\
D_{2,j} \ar^{\alpha_{1,j}}[r] & C_{2,j} \ar^{\beta_{2,j}}[r] & D_{2,j}\ar^{\alpha_{1,j}}[r] & C_{2,j}
}
\]
commutes in $C^+(\BMod_A)$. Moreover, one can choose homotopies witnessing that each row is a complex in $K^+(\BMod_A)$ which themselves form commutative diagrams with the $f_j$ and the $g_j$.
\item[(c)]
The morphism $\Tot(f_\bullet): \Tot(C_{1,\bullet}) \to \Tot(C_{2,\bullet})$
is a quasi-isomorphism.
\end{enumerate}
Then each cohomology group $h^i(\Tot(C_{1,\bullet}) \cong h^i(\Tot(C_{2,\bullet}))$ is contained in some finite $A$-module.
\end{lemma}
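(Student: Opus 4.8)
The plan is to reduce the statement to Lemma~\ref{L:Cartan-Serre}. The obstruction to applying that lemma directly to $\Tot(f_\bullet)$ is that the $f_j$, and hence $\Tot(f_\bullet)$ in each degree, are not assumed to be completely continuous --- only the $g_j$ are --- so one must reroute $\Tot(f_\bullet)$, up to homotopy, through the $g_j$, while keeping enough rigidity that no spurious homotopy-correction terms intervene (in the sense flagged by Remark~\ref{R:mapping cone func}). Concretely: by condition~(a) and Remark~\ref{R:morphism-totalization}, the family $f_\bullet$ induces a morphism $\Tot(f_\bullet)\colon \Tot(C_{1,\bullet}) \to \Tot(C_{2,\bullet})$ in $C^+(\BMod_A)$, which is a quasi-isomorphism by condition~(c). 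Using condition~(b), I would then assemble the objects $D_{i,j}$ (with transition maps induced from those of $C_{i,\bullet}$ via the $\alpha$'s and $\beta$'s) into sequences $D_{i,\bullet}$ representing complexes in $K^+(\BMod_A)$, choosing all the relevant nullhomotopies and homotopy-inverse data compatibly with $f_\bullet$, $g_\bullet$, $\alpha_\bullet$, $\beta_\bullet$, and then form the totalized morphisms $\Tot(\alpha_{i,\bullet})\colon \Tot(D_{i,\bullet}) \to \Tot(C_{i,\bullet})$, $\Tot(\beta_{i,\bullet})\colon \Tot(C_{i,\bullet}) \to \Tot(D_{i,\bullet})$, and $\Tot(g_\bullet)\colon \Tot(D_{1,\bullet}) \to \Tot(D_{2,\bullet})$. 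Because the diagrams in condition~(b) commute in $C^+(\BMod_A)$ and the homotopies are compatible, an iterated application of Remark~\ref{R:mapping cone func} with zero homotopy should show: each $\Tot(\alpha_{i,\bullet})$ is a homotopy equivalence with quasi-inverse $\Tot(\beta_{i,\bullet})$ (in particular a quasi-isomorphism); $\Tot(g_\bullet)$ in each degree is, up to signs, a finite direct sum of the completely continuous morphisms $g_j$ in the relevant degrees, hence itself completely continuous; and $\Tot(f_\bullet)\circ\Tot(\alpha_{1,\bullet}) = \Tot(\alpha_{2,\bullet})\circ\Tot(g_\bullet)$ as morphisms of complexes.

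Granting this, set $G := \Tot(f_\bullet)\circ\Tot(\alpha_{1,\bullet}) = \Tot(\alpha_{2,\bullet})\circ\Tot(g_\bullet)\colon \Tot(D_{1,\bullet}) \to \Tot(C_{2,\bullet})$. As a composite of two quasi-isomorphisms, $G$ is a quasi-isomorphism, so the map it induces on $H^i$ is an isomorphism, whose cokernel is trivially a finite $A$-module; and from the second description of $G$ together with Remark~\ref{R:composition} (postcomposition of a completely continuous morphism with a bounded one is completely continuous), $G^i$ is completely continuous. Thus hypotheses~(a) and~(b) of Lemma~\ref{L:Cartan-Serre} are met for $G$ in degree $i$, and its conclusion gives that $H^i(\Tot(C_{2,\bullet}))$ is contained in a finite $A$-module; since $G$ and $\Tot(\alpha_{1,\bullet})$ are quasi-isomorphisms, the same holds for $H^i(\Tot(C_{1,\bullet})) \cong H^i(\Tot(D_{1,\bullet})) \cong H^i(\Tot(C_{2,\bullet}))$.

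I expect the main obstacle to be this first step: one must run the iterated-cone construction of the various totalizations while tracking the homotopies, so as to verify the strict identity $\Tot(f_\bullet)\circ\Tot(\alpha_{1,\bullet}) = \Tot(\alpha_{2,\bullet})\circ\Tot(g_\bullet)$, the homotopy-equivalence property of the $\Tot(\alpha_{i,\bullet})$, and the ``diagonal'' description of $\Tot(g_\bullet)$ (so that its degreewise components really are finite direct sums of the $g_j$, and hence completely continuous). This is precisely where the rigidity imposed by hypotheses~(a) and~(b) --- commutativity in $C^+(\BMod_A)$ with compatible homotopies, not merely in $K^+(\BMod_A)$ --- is indispensable; once it is in hand, the Cartan--Serre argument above is routine.
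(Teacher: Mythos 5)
Your overall strategy --- replace the totalizations $\Tot(C_{i,\bullet})$ by homotopy-equivalent complexes built out of the $D_{i,j}$, so that the comparison morphism has completely continuous components, and then feed the result into Lemma~\ref{L:Cartan-Serre} --- is exactly the paper's strategy, and your endgame (the composite $G$ is degreewise completely continuous by Remark~\ref{R:composition} and is a quasi-isomorphism, so Lemma~\ref{L:Cartan-Serre} applies) is fine. The gap sits precisely at the step you flag as the main obstacle, and it is not mere bookkeeping. If the transition maps of $D_{i,\bullet}$ are taken to be $\beta_{i,j+1}\circ t_{i,j}\circ\alpha_{i,j}$ (with $t_{i,j}\colon C_{i,j}\to C_{i,j+1}$ the given transition map), then the square comparing this to $t_{i,j}$ via the vertical maps $\alpha_{i,j}$, $\alpha_{i,j+1}$ does \emph{not} commute in $C^+(\BMod_A)$: the two composites differ by $(\alpha_{i,j+1}\circ\beta_{i,j+1}-\mathrm{id})\circ t_{i,j}\circ\alpha_{i,j}$, which is only null-homotopic, via the uncontrolled homotopy witnessing $\alpha_{i,j+1}\circ\beta_{i,j+1}\simeq\mathrm{id}$ (and symmetrically for $\beta$ in the other direction). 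So ``iterated application of Remark~\ref{R:mapping cone func} with zero homotopy'' is not available for $\Tot(\alpha_{i,\bullet})$; that morphism, if it can be built at all, must carry nonzero correction terms, and the asserted strict identity $\Tot(f_\bullet)\circ\Tot(\alpha_{1,\bullet})=\Tot(\alpha_{2,\bullet})\circ\Tot(g_\bullet)$ does not hold as stated. The same homotopies also enter the differentials of $\Tot(D_{i,\bullet})$ itself (they are needed to witness that $D_{i,\bullet}$ is a complex in $K^+(\BMod_A)$), and their compatibility with the $g_j$ is not directly supplied by the hypotheses.

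The paper's proof circumvents this by never forming $\Tot(D_{i,\bullet})$ with those transition maps. It constructs, by descending induction on $j$, complexes $\mathrm{T}(D)_{i,j}$ as iterated cones in which the map $D_{i,j-1}\to\mathrm{T}(D)_{i,j}$ is a composite of genuine morphisms of complexes ($\alpha_{i,j-1}$, then the structure map into $\Tot(C_{i,j-1}\to\cdots\to C_{i,n})$, then the previously built quasi-isomorphism $T_{i,j}$); hence the differentials of $\mathrm{T}(D)_{i,0}$ involve no homotopy corrections. All corrections arising from $\alpha\circ\beta\simeq\mathrm{id}$ are pushed into the quasi-isomorphisms $T_{i,j}\colon\Tot(C_{i,j}\to\cdots\to C_{i,n})\to\mathrm{T}(D)_{i,j}$, which are built via Remark~\ref{R:mapping cone func} with nonzero homotopies --- harmless there, since those maps need not be completely continuous --- while the completely continuous comparison $\mathrm{T}(D)_{1,0}\to\mathrm{T}(D)_{2,0}$ is assembled from the $g_j$ alone using the strictly commuting squares of hypothesis~(b). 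If you reorganize your construction along these lines, in particular requiring your key square only to commute in $K^+(\BMod_A)$ so that $G$ is homotopic (rather than equal) to a quasi-isomorphism, the remainder of your argument goes through unchanged.
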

\begin{proof}
For $0\leq j\leq n$, we will construct by descending induction a quasi-isomorphism
\[
T_{i,j}:\Tot(C_{i,j}\to\cdots \to C_{i,n})\to \mathrm{T}(D)_{i,j}
\]
in $C^+(\BMod_A)$ as follows. For $j=n$, put $\mathrm{T}(D)_{i,n}=D_{i,n}$ and $T_{i,n}=\beta_{i,n}$. Suppose we have constructed $T_{i,j}$. Consider the diagram
\[
\xymatrix{
C_{i,j-1} \ar[r] \ar^{\beta_{i,j-1}}[d] & \Tot(C_{i,j}\to\cdots \to C_{i,n})\ar[d]^{T_{i,j}} \\
D_{i,j-1} \ar[r] & \mathrm{T}(D)_{i,j}
}
\] 
where $D_{i,j-1}\to\mathrm{T}(D)_{i,j}$ is given by the composition 
\[
D_{i,j-1} \stackrel{\alpha_{i,j-1}}\longrightarrow C_{i,j-1}\stackrel{T_{i,j}}\longrightarrow\Tot(C_{i,j}\to\cdots \to C_{i,n})\to\mathrm{T}(D)_{i,j}.
\]
Since $\alpha_{i,j-1}$ and $\beta_{i,j-1}$ are inverse to each other in $K^+(\BMod_A)$, this diagram is commutative in $K^+(\BMod_A)$. We set 
\[
\mathrm{T}(D)_{i,j-1}=\Cone(D_{i,j-1}\to\mathrm{T}(D)_{i,j})
\]
and $T_{i,j-1}$ to be the morphism 
\[
\Tot(C_{i,j-1}\to\cdots \to C_{i,n})=\Cone(C_{i,j-1} \to \Tot(C_{i,j}\to\cdots \to C_{i,n}))\to\mathrm{T}(D)_{i,j-1}.
\]
Since $\beta_{i,j-1}$ and $T_{i,j}$ are quasi-isomorphisms, $T_{i,j-1}$ is a quasi-isomorphism as well. Note that by
(b), the family $g_\bullet$ induces a morphism $\mathrm{T}(D)_{1,j}\to \mathrm{T}(D)_{2,j}$ making the diagram
\[
\xymatrix{
\mathrm{T}(D)_{1,j}\ar[r]\ar[d] & \Tot(C_{1,j}\to\cdots \to C_{1,n})\ar[d] \\
\mathrm{T}(D)_{2,j}\ar[r] & \Tot(C_{2,j}\to\cdots \to C_{2,n})
}
\]
commutative in $C^+(\BMod_A)$. By (c), we thus get a quasi-isomorphism $\mathrm{T}(D)_{1,0}\to \mathrm{T}(D)_{2,0}$, to which we may directly apply Lemma~\ref{L:Cartan-Serre}.
\end{proof}

\begin{remark} \label{R:Cartan-Serre use case2}
Lemma~\ref{L:Cartan-Serre homotopy} is required because the Proposition~\ref{P:direct image finiteness} involves a more complicated arrangement than the one described in Remark~\ref{R:Cartan-Serre use case}: we cannot represent the desired comparison of cohomology using a single completely continuous morphism in $C^+(\BMod_A)$.
Rather, what arises most naturally is a sequence of (pairs of) complexes, each derived as
Remark~\ref{R:Cartan-Serre use case} for a different ring homomorphism, which fit together in $K^+(\BMod_A)$ but not in $C^+(\BMod_A)$. Moreover, the comparison maps between the individual complexes are not quasi-isomorphisms; this only occurs for the total complexes.
The commutativity conditions will arise from the construction of the comparison maps as certain base extensions.
\end{remark}

\section{Cohomology of \texorpdfstring{$(\varphi, \Gamma)$}{(phi, Gamma)}-modules}
\label{subsec:ordinary finiteness}

As an example of the preceding discussion,  we rederive some existing finiteness results for the cohomology of $(\varphi, \Gamma)$-modules in classical (nonrelative) $p$-adic Hodge theory.

\begin{defn}
Let $\bA_{\QQ_p}$ be the ring of formal sums $\sum_{n \in \ZZ} c_n \pi^n$ with $c_n \in \ZZ_p$ such that $c_n \to 0$ as $n \to -\infty$. For $\gamma \in \ZZ_p^\times$, this ring admits commuting endomorphisms $\varphi, \gamma$ given by the formulas
\begin{equation} \label{eq:phi gamma}
\varphi\left(\sum_n c_n \pi^n\right) = \sum_n c_n ((1+\pi)^p-1)^n, \qquad
\gamma\left(\sum_n c_n \pi^n\right) = \sum_n c_n ((1+\pi)^\gamma-1)^n.
\end{equation}
A \emph{$(\varphi, \Gamma)$-module} over $\bA_{\QQ_p}$ is a finite projective module
over $\bA_{\QQ_p}$ equipped with commuting semilinear actions of $\varphi$ and $\Gamma$;
if we allow the module to be finitely generated but not necessarily projective, we obtain a  \emph{generalized $(\varphi, \Gamma)$-module} over $\bA_{\QQ_p}$.
We define the cohomology groups $H^i_{\varphi, \Gamma}(M)$ of a generalized $(\varphi, \Gamma)$-module $M$ to be the 
total cohomology of the double complex 
\[
C_{\varphi, \Gamma}(M): 0 \to C^{\bullet}_{\cont}(\Gamma, M) \stackrel{\varphi-1}{\to} 
C^{\bullet}_{\cont}(\Gamma, M) \to 0,
\]
where $C^{\bullet}_{\cont}(\Gamma, M)$ is the complex of continuous group cochains.
\end{defn}

\begin{lemma} \label{L:cohomology mod p}
Let $R$ be a $p$-adically separated and complete ring. Let $\calA$ be an abelian subcategory of the category of $p$-adically separated and complete $R$-modules such that
for any $M\in\calA$ which is finitely generated as an $R$-module, $\Tor_1^R(M, R/pR)$ is a finitely generated $R/pR$-module. Now suppose that
$C: 0 \to C^0 \to \cdots \to C^n \to 0$
is a bounded complex in $\calA$ such that the $R/pR$-modules $H^i(C^\bullet \otimes_R R/pR)$
are finitely generated.
Then the $R$-modules $H^i(C^\bullet)$ are themselves finitely generated.
\end{lemma}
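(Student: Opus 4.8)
The plan is to run a standard $p$-adic approximation argument, the key point being to control torsion via the hypothesis on $\Tor_1^R(-, R/pR)$. First I would reduce to showing that each cohomology module $H^i := H^i(C^\bullet)$ is finitely generated by exhibiting a finite set of generators whose images span $H^i \otimes_R R/pR$ and then invoking a topological Nakayama-type lemma: since every object of $\calA$ is $p$-adically separated and complete, if $H^i$ is $p$-adically separated and complete and $H^i/pH^i$ is finitely generated over $R/pR$, then $H^i$ is finitely generated over $R$. So the real work is to (i) establish that $H^i$ lies in $\calA$, hence is $p$-adically separated and complete, and (ii) bound $H^i/pH^i$ in terms of the given data $H^i(C^\bullet \otimes_R R/pR)$.

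For (i): since $\calA$ is an abelian subcategory of $p$-adically separated and complete $R$-modules, the kernels, cokernels, and images of the differentials $d^i$ all lie in $\calA$, and therefore so does each subquotient $H^i(C^\bullet)$. For (ii): consider the short exact sequences of complexes and the universal coefficient / change-of-rings spectral sequence (or equivalently the hypercohomology spectral sequence for $- \otimes_R^{\mathbf L} R/pR$ applied to $C^\bullet$). Concretely, from
\[
0 \to H^i(C^\bullet)/p \to H^i(C^\bullet \otimes_R R/pR) \to H^{i+1}(C^\bullet)[p] \to 0
\]
we read off that $H^i(C^\bullet)/p H^i(C^\bullet)$ is a submodule of the finitely generated $R/pR$-module $H^i(C^\bullet \otimes_R R/pR)$. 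But $R/pR$ need not be noetherian, so a submodule of a finitely generated module need not be finitely generated; this is precisely where the $\Tor_1$ hypothesis enters. The exact sequence
\[
0 \to H^i(C^\bullet) \otimes_R R/pR \to H^i(C^\bullet \otimes_R R/pR) \to \Tor_1^R(H^{i+1}(C^\bullet), R/pR) \to 0
\]
identifies the cokernel of $H^i(C^\bullet)/p \hookrightarrow H^i(C^\bullet \otimes_R R/pR)$ with $\Tor_1^R(H^{i+1}(C^\bullet), R/pR)$, so this group too is a subquotient of a finitely generated $R/pR$-module.

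The argument is therefore by descending induction on $i$, starting from the top where $C^{n+1} = 0$ forces $H^{n+1}(C^\bullet) = 0$, so that $H^n(C^\bullet)/p$ equals the finitely generated module $H^n(C^\bullet \otimes_R R/pR)$ and the topological Nakayama lemma gives that $H^n(C^\bullet)$ is finitely generated over $R$. Inductively, once $H^{i+1}(C^\bullet)$ is known to be a finitely generated object of $\calA$, the hypothesis on $\calA$ gives that $\Tor_1^R(H^{i+1}(C^\bullet), R/pR)$ is finitely generated over $R/pR$; combined with the two exact sequences above, $H^i(C^\bullet)/p$ is then an extension of (a submodule of) one finitely generated $R/pR$-module by another, hence—using that $R/pR$-submodules appearing here are finitely generated because they are kernels/images of maps between finitely generated modules over... no: the cleaner route is that $H^i(C^\bullet)/p$ sits in the displayed exact sequence as the kernel of a map from a finitely generated module whose image (the cokernel being $\Tor_1$) is finitely generated, and an extension of finitely generated modules is finitely generated, so $H^i(C^\bullet)/p$ is finitely generated over $R/pR$. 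Then the topological Nakayama lemma applies again.

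The main obstacle is the non-noetherianity of $R$: one cannot freely pass to submodules, so the whole structure of the proof is dictated by making sure that every $R/pR$-module one encounters is reached as a quotient or an extension (never merely a sub) of something already known to be finitely generated, which is exactly what the two universal-coefficient exact sequences and the $\Tor_1$ hypothesis arrange. A secondary point to get right is the precise form of the topological Nakayama lemma for $p$-adically separated and complete modules and the verification that $H^i(C^\bullet)$, built as a subquotient inside $\calA$, is genuinely $p$-adically separated and complete (this is where membership in the abelian subcategory $\calA$ is used, rather than just being an $R$-module).
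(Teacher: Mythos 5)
Your high-level strategy---descending induction, a topological Nakayama argument using $p$-adic separatedness and completeness of objects of $\calA$, and the $\Tor_1$ hypothesis to control the passage from $H^i(C^\bullet \otimes_R R/pR)$ to $H^i(C^\bullet)/p$---is the right one and is essentially the paper's. But the two exact sequences on which your argument rests are not available at this level of generality. The Bockstein sequence $0 \to H^i(C^\bullet)/p \to H^i(C^\bullet \otimes_R R/pR) \to H^{i+1}(C^\bullet)[p] \to 0$ requires $p$ to act injectively on each term $C^i$ (so that $0 \to C^\bullet \to C^\bullet \to C^\bullet/p \to 0$ is exact), and the universal-coefficient sequence $0 \to H^i(C^\bullet) \otimes_R R/pR \to H^i(C^\bullet \otimes_R R/pR) \to \Tor_1^R(H^{i+1}(C^\bullet), R/pR) \to 0$ requires the $C^i$ to be flat (or at least $\Tor$-independent of $R/pR$), so that the underived tensor product computes $C^\bullet \otimes_R^{\mathbf{L}} R/pR$. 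Objects of $\calA$ are merely $p$-adically separated and complete; they can have $p$-torsion and need not be flat. Concretely, take $R = \Zp$ and $C^\bullet\colon \Zp \to \Zp/p$ the reduction map in degrees $0,1$: then $H^0(C^\bullet) = p\Zp$, so $H^0(C^\bullet)/p \cong \Fp$, while $H^0(C^\bullet \otimes_R \Fp) = \ker(\Fp \xrightarrow{\sim} \Fp) = 0$, so the map $H^0(C^\bullet)/p \to H^0(C^\bullet \otimes_R R/pR)$ is not even injective. Outside of top degree (where right-exactness of $\otimes$ gives $H^n(C^\bullet \otimes_R R/pR) = H^n(C^\bullet) \otimes_R R/pR$ with no hypotheses), you cannot use these sequences.

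There is a second, independent gap: even granting your sequences, your deduction that $H^i(C^\bullet)/p$ is finitely generated is exactly the forbidden move. In $0 \to H^i(C^\bullet)/p \to H^i(C^\bullet \otimes_R R/pR) \to \Tor_1^R(H^{i+1}(C^\bullet), R/pR) \to 0$, the module you want is the \emph{kernel} of a surjection between finitely generated $R/pR$-modules; over a non-noetherian ring this does not force finite generation (consider $0 \to I \to R/pR \to (R/pR)/I \to 0$ with $I$ not finitely generated). The lemma ``an extension of finitely generated modules is finitely generated'' yields the middle term of an extension, never the sub. The paper's proof is engineered precisely to avoid both issues: it applies the successive-approximation argument only in top degree, where $H^n(C^\bullet)/p = H^n(C^\bullet \otimes_R R/pR)$ holds by right-exactness alone (it lifts generators to a finite submodule $S \subseteq C^n$ and shows $C^{n-1} \oplus S \to C^n$ is surjective using completeness of the source and separatedness of the target); it then truncates the complex at $Z^{n-1} = \ker(C^{n-1} \to C^n)$, turning $H^{n-1}$ into the new top cohomology, and uses the $\Tor_1^R(H^n(C^\bullet), R/pR)$ term---finitely generated by the hypothesis on $\calA$, now that $H^n(C^\bullet)$ is known to be a finitely generated object of $\calA$---to show that the truncated complex still has finitely generated cohomology mod $p$. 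Every module whose finite generation is needed there is reached as a quotient or an extension of modules already known to be finitely generated, which is the discipline your argument needs but does not maintain.
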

\begin{proof}
We proceed by induction on $n$. 
Since base extension of modules is right exact, we have $H^n(C^\bullet \otimes_{R} R/p^m R) = H^n(C^\bullet) \otimes_{R} R/p^m R$; in particular, 
for $m_1 \leq m_2$, we have
\[
H^n(C^\bullet \otimes_{R} R/p^{m_2} R) \otimes_{R/p^{m_2}R} R/p^{m_1}R \cong
H^n(C^\bullet \otimes_{R} R/p^{m_1} R).
\]
By hypothesis, $H^n(C^\bullet \otimes_{R} R/p R)$ is a finitely generated $R/pR$-module; 
we may thus choose a finitely generated $R$-submodule $S$ of $C^n$ which surjects onto $H^n(C^\bullet \otimes_{R} R/p R)$. Now
\[
C^{n-1} \oplus S \to C^n
\]
is a map between $p$-adically separated and complete $R$-modules which is surjective modulo $p$, hence it is surjective; that is, $S \to H^n(C^\bullet)$ is surjective, so $H^n(C^\bullet)$ is finitely generated.

Now put $C^{\prime n-1} = \ker(C^{n-1} \to C^n)$; the truncated complex 
\[
C': 0 \to C^0 \to \cdots \to C^{n-2} \to C^{\prime n-1} \to 0
\]
is again a complex in $\calA$.
The $R/pR$-modules $H^i(C^{\prime \bullet} \otimes_R R/pR)$ are again finitely generated: namely, this
is obvious except for the final group
\[
H^{n-1}(C^{\prime \bullet} \otimes_R R/pR)
= 
\coker(C^{n-2}/pC^{n-2} \to C^{\prime n-1}/pC^{\prime n-1}),
\]
which fits into a right exact sequence
\[
\Tor_1^R(H^n(C^{\bullet}), R/pR)\to H^{n-1}(C^{\prime \bullet} \otimes_R R/pR)\to H^{n-1}(C^{\bullet} \otimes_R R/pR)\to0.
\]
By assumption we have that $\Tor_1^R(H^n(C^{\bullet}), R/pR)$ is a finitely generated $R/pR$-module. This yields that $H^{n-1}(C^{\prime \bullet} \otimes_R R/pR)$ is a finitely generated $R/pR$-module as well. 
We may thus apply the induction hypothesis to $C'$ to complete the proof.
\end{proof}

The following result
was proved for $p > 2$ by Herr \cite[Th\'eor\`eme~B]{herr1}
and in full by the second author \cite[Theorem~3.3]{liu-herr};
here, we derive it as an application of the Cartan-Serre method. 
\begin{theorem}[Herr, Liu] \label{T:integral ordinary phi-Gamma finiteness}
Let $M$ be a generalized $(\varphi, \Gamma)$-module over $\bA_{\QQ_p}$.
Then the groups $H^i_{\varphi,\Gamma}(M)$ are finite $\Zp$-modules for $i=0,1,2$ and zero for $i>2$.
\end{theorem}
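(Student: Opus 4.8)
\emph{Strategy.} The plan is to reduce the statement to an explicit three–term complex, to use Lemma~\ref{L:cohomology mod p} to pass from $\bA_{\QQ_p}$ to its residue field, and then to obtain the remaining finiteness from the Cartan--Serre machinery of \S\ref{subsec:finiteness}, with the completely continuous morphisms arising (as in Example~\ref{exa:Tate completely continuous}) from restriction between Tate–algebra–type rings. First I would dispose of the $\Gamma$–direction: since $\Gamma = \ZZ_p^\times$ is procyclic modulo a finite subgroup of order prime to $p$ (with an extra $\ZZ/2\ZZ$ needing separate, standard care when $p=2$), the continuous cochain complex $C^\bullet_\cont(\Gamma, M)$ is functorially quasi-isomorphic to a two–term complex $[M_\Delta \xrightarrow{\gamma-1} M_\Delta]$, where $M_\Delta$ is the prime-to-$p$ coinvariant quotient, again a finite $\bA_{\QQ_p}$-module. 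Totalizing with the $\varphi-1$ direction exhibits $C_{\varphi,\Gamma}(M)$ as quasi-isomorphic to an explicit ``Herr complex'' $K^\bullet(M)=[M_\Delta \to M_\Delta^{\oplus 2} \to M_\Delta]$ concentrated in degrees $0,1,2$. This already gives $H^i_{\varphi,\Gamma}(M)=0$ for $i>2$, and reduces the theorem to finiteness of $H^0,H^1,H^2$ of $K^\bullet(M)$.

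\emph{Reduction to the residue field.} The ring $\bA_{\QQ_p}$ is a complete discrete valuation ring with uniformizer $p$ and residue field $\bE_{\QQ_p}:=\bA_{\QQ_p}/p\bA_{\QQ_p}\cong\Fp(\!(\pi)\!)$, and $\varphi,\Gamma$ are $\ZZ_p$-linear, so $K^\bullet(M)$ is a bounded complex of $p$-adically complete $\ZZ_p$-modules whose reduction modulo $p$ is the Herr complex $K^\bullet(M/pM)$ of the associated generalized $(\varphi,\Gamma)$-module over $\bE_{\QQ_p}$. I would then invoke Lemma~\ref{L:cohomology mod p} with $R=\ZZ_p$ and $\calA$ the category of $p$-adically separated and complete $\ZZ_p$-modules: the hypothesis on $\Tor_1^{\ZZ_p}(-,\Fp)$ is automatic since $\Tor_1^{\ZZ_p}(N,\Fp)=N[p]$, so the lemma reduces finite generation over $\ZZ_p$ of the groups $H^i(K^\bullet(M))=H^i_{\varphi,\Gamma}(M)$ to finite-dimensionality over $\Fp$ of the groups $H^i(K^\bullet(M/pM))=H^i_{\varphi,\Gamma}(M/pM)$. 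In other words, it suffices to prove the theorem for generalized $(\varphi,\Gamma)$-modules over $\bE_{\QQ_p}$, with ``finite $\ZZ_p$-module'' replaced by ``finite-dimensional $\Fp$-vector space''.

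\emph{The Cartan--Serre step.} For a generalized $(\varphi,\Gamma)$-module $M$ over $\bE_{\QQ_p}$ I would replace $\varphi-1$ in $K^\bullet(M)$ by $\psi-1$, where $\psi$ is Colmez's canonical $\Gamma$-equivariant left inverse of $\varphi$ coming from the decomposition of $\bE_{\QQ_p}$ as a finite free module over its Frobenius image; the resulting $\psi$-Herr complex is canonically quasi-isomorphic to $K^\bullet(M)$. The point of this substitution is that $\psi$ is \emph{completely continuous} in the sense of Definition~\ref{D:completely continuous}: after descending $M$ to an overconvergent (bounded, hence annulus-type) model, $\psi$ maps the unit ball at a given radius into a relatively compact set, which is precisely the radius-shrinking phenomenon of Example~\ref{exa:Tate completely continuous}. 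Packaging the $\psi$-Herr complex as the target of a completely continuous quasi-isomorphism — equivalently, realizing it (including the $\gamma-1$ layer) as a Čech-type complex for a cover by two strictly contained affinoids — and feeding this into Lemma~\ref{L:Cartan-Serre} (via Lemma~\ref{L:Schwartz2}) shows that each $H^i$ is contained in a finite module over the base, hence, the base being noetherian (Remark~\ref{R:Kiehl def}), is itself finite over it; tracking coefficients gives finiteness over $\Fp$ in the residue-field case, and then over $\ZZ_p$ in general by the previous step.

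\emph{Main obstacle.} The substantive work is entirely in the last step. One must (i) produce the overconvergent model and verify that $\psi$ is genuinely completely continuous there — the naive $p$-adic (resp. $\pi$-adic) norm is useless, since $\psi$ is at best norm-nonincreasing for it, so one is forced onto weighted Gauss norms and an annulus-type coordinate ring, and in the non-étale case one must first pass through a slope filtration to reduce to isoclinic modules; and (ii) arrange the comparison so that the \emph{strictness} of the relevant inclusions and the homotopy-compatibility conditions that enter Lemma~\ref{L:Schwartz2} and Lemma~\ref{L:Cartan-Serre homotopy} actually hold, and so that it is compatible with the $\Gamma$-direction and with reduction modulo $p$ used in the dévissage. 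Once these ingredients are in place, the rest of the argument is the formal bookkeeping sketched above.
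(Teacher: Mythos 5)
Your skeleton (kill $i>2$ by making $\Gamma$ procyclic modulo a finite group, reduce mod $p$ via Lemma~\ref{L:cohomology mod p}, then run Cartan--Serre on the resulting complex over $\bA_{\QQ_p}/(p) \cong \FF_p((\pi))$) is exactly the paper's, but the Cartan--Serre step itself is set up quite differently, and your version has a genuine gap. The paper never introduces $\psi$: it keeps $\varphi-1$, equips $\FF_p((\pi))$ with the two-parameter family of annulus norms $\left|\,\cdot\,\right|_{s,r}$, observes that $\varphi-1$ is bounded between suitably chosen norms, and takes as its completely continuous quasi-isomorphism the \emph{identity map} of the Herr complex viewed as a restriction $C_{\varphi,\Gamma}(M)_{s',r'} \to C_{\varphi,\Gamma}(M)_{s,r}$ with $[s,r]$ interior to $[s',r']$ --- i.e.\ exactly the situation of Example~\ref{exa:Tate completely continuous} and Remark~\ref{R:Cartan-Serre use case}. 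Your route instead requires two nontrivial inputs that you flag but do not supply, and at least one of them is not merely hard but misdirected. First, the quasi-isomorphism between the $\varphi$- and $\psi$-Herr complexes amounts to bijectivity of $1-\varphi$ on $M^{\psi=0}$ for a \emph{generalized} $(\varphi,\Gamma)$-module; proving that is comparable in difficulty to the theorem itself and is precisely the kind of delicate algebra the Cartan--Serre method is designed to avoid. Second, the claimed complete continuity of $\psi$ does not reduce to the ``radius-shrinking phenomenon'': on annulus rings $\psi$ is bounded as a map $\calO([s,r]) \to \calO([s^p,r^p])$, and since $r^p < r$ for $r<1$ the target annulus is not an inner subdomain of the source, so one cannot close $\psi$ up into an endomorphism of a single annulus ring by composing with a completely continuous restriction; the standard contracting property of $\psi$ used by Herr and Colmez lives on integral lattices and is not complete continuity in the sense of Definition~\ref{D:completely continuous}. (Your mention of slope filtrations is also out of place here: over $\FF_p((\pi))$ there is no slope theory to invoke; that issue belongs to Theorem~\ref{T:ordinary phi-Gamma finiteness} over the Robba ring.)

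A second, smaller but real, gap: after reducing mod $p$ you apply Lemma~\ref{L:Cartan-Serre} over the base $\FF_p$ and then appeal to noetherianity. But $\FF_p$ is not a Banach ring in the sense of this paper (it has no topologically nilpotent unit), so Lemma~\ref{L:Cartan-Serre} does not apply as stated; the paper has to base-extend to $\FF_p((z))$, apply the lemma there, and project onto the coefficient of $z^0$. If you repair the Cartan--Serre step by abandoning $\psi$ in favour of the two-norm trick on $\varphi-1$, and add the $\FF_p((z))$ device, your argument becomes the paper's proof.
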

\begin{proof}
Write $\Gamma$ as the product of a finite subgroup $\Delta$ and a subgroup $\Gamma'$ admitting a topological generator $\gamma$. Then
$C^{\bullet}_{\cont}(\Gamma, M)$ is quasi-isomorphic to the complex
\[
0 \to M^\Delta \stackrel{\gamma-1}{\to} M^\Delta \to 0
\]
so its cohomology vanishes outside of degrees 0 and 1; this implies that
$H^i_{\varphi,\Gamma}(M) = 0$ for $i > 2$.

To prove that the groups $H^i_{\varphi,\Gamma}(M)$ are finite $\Zp$-modules for $i=0,1,2$, by Lemma~\ref{L:cohomology mod p} we may reduce to the case where $M$ is killed by $p$,
and hence finite free over $\bA_{\QQ_p}/(p) \cong \QQ_p((\pi))$.
Fix a basis of $M$. For $0 < s \leq r < 1$, note that $\QQ_p((\pi))$ is complete with respect to the norm $\left| \bullet \right|_{s,r}$ given by
\[
\left| c_n \pi^n \right|_{s,r} = \max\{\max\{r^n, s^n\}: n \in \ZZ, c_n \neq 0\},
\]
so $M$ is complete with respect to the supremum norm $\left| \bullet \right|_{s,r,M}$.
For any $s,r$, the morphisms in $C^{\bullet}_{\cont}(\Gamma, M)$ are bounded with respect to $\left| \bullet \right|_{s,r,M}$; if $s \leq r^p$, the the morphism $\varphi-1$ is bounded with respect to $\left| \bullet \right|_{s,r^p,M}$ on the source and
$\left| \bullet \right|_{s,r,M}$ on the target.
Let $C_{\varphi, \Gamma}(M)_{s,r}$ be the double complex $C_{\varphi, \Gamma}(M)$ equipped with norms in this manner.

Now choose $r,s,r',s' \in (0,1)$ with $s \leq r^p$, $s' < s$, and $r < r'$.
Then
\[
C_{\phi, \Gamma}(M)_{s',r'} \to C_{\phi, \Gamma}(M)_{s,r}
\]
is a morphism of double complexes which is a quasi-isomorphism (since it is the identity map on underlying modules) in which each individual map is completely continuous.
We would then be in the situation where the desired finiteness would follow from
Lemma~\ref{L:Cartan-Serre}, except that the base ring $\FF_p$ does not contain a topologically nilpotent unit. However, we may get around this issue using the method of
\cite[Lemma~8.9]{kedlaya-reified}: extend base from $\FF_p$ to $\FF_p((z))$, apply
Lemma~\ref{L:Cartan-Serre} (keeping in mind Remark~\ref{R:Cartan-Serre use case}), then project onto the coefficient of $z^0$. We thus recover the desired finiteness.
\end{proof}

\begin{defn}
Let $\bC_{\QQ_p}$ be the union of the rings of rigid analytic functions on the discs
$c < \left| \pi \right| < 1$ over all $c \in (0,1)$ (commonly known as the \emph{Robba ring} over $\QQ_p$); this ring carries a natural locally convex topology as the inductive limit of Fr\'echet spaces. The equations \eqref{eq:phi gamma} again define commuting operators $\varphi, \gamma$ on $\bC_{\QQ_p}$ for $\gamma \in \Gamma$.
(The notation is in the style of \cite{part1, part2}; another common label is $\bB^{\dagger}_{\rig,\QQ_p}$.) 

A \emph{$(\varphi, \Gamma)$-module} over $\bC_{\QQ_p}$ is a finite projective module
over $\bC_{\QQ_p}$ equipped with commuting semilinear actions of $\varphi$ and $\Gamma$,
with the action of $\Gamma$ being continuous;
if we allow the module to be finitely presented but not necessarily projective, we obtain a \emph{generalized $(\varphi, \Gamma)$-module} over $\bC_{\QQ_p}$.
We define cohomology groups as before.
\end{defn}

The following statement is due to the second author \cite{liu-herr}, but again we prefer to prove it as an illustration of the Cartan-Serre method.

\begin{theorem}[Liu] \label{T:ordinary phi-Gamma finiteness}
Let $M$ be a generalized $(\varphi, \Gamma)$-module over $\bC_{\QQ_p}$.
Then the groups $H^i_{\varphi,\Gamma}(M)$ are finite $\Qp$-modules for $i=0,1,2$ and zero for $i>2$.
\end{theorem}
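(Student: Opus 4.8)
The plan is to mimic the proof of Theorem~\ref{T:integral ordinary phi-Gamma finiteness}, replacing the two-radii norms on $\QQ_p((\pi))$ by a family of Fréchet norms on the Robba ring. First I would reduce the vanishing $H^i_{\varphi,\Gamma}(M)=0$ for $i>2$ exactly as before: write $\Gamma = \Delta \times \Gamma'$ with $\Gamma'$ topologically generated by $\gamma$, so that $C^\bullet_{\cont}(\Gamma,M)$ is quasi-isomorphic to $[M^\Delta \stackrel{\gamma-1}{\to} M^\Delta]$, concentrated in degrees $0,1$; totalizing with $\varphi-1$ then gives a complex concentrated in degrees $0,1,2$. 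This step is purely formal and identical to the integral case, so the content is the finiteness for $i=0,1,2$.

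For the finiteness, the first move is a dévissage to the projective case. Unlike the integral situation (where one reduces mod $p$), here I would use that a generalized $(\varphi,\Gamma)$-module over $\bC_{\QQ_p}$ is finitely presented, so it sits in an exact sequence $0 \to M' \to F \to M \to 0$ with $F$ a $(\varphi,\Gamma)$-module (finite projective) and $M'$ again finitely presented; one needs that the class of finitely presented modules over the Robba ring is stable under taking kernels of maps between finite projectives, i.e.\ coherence-type input for $\bC_{\QQ_p}$, which is available from \cite{part1, part2}. The long exact cohomology sequence then reduces finiteness for $M$ to finiteness for finite projective $(\varphi,\Gamma)$-modules (and a Noetherian induction/bookkeeping argument since the cohomology degrees are bounded). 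So assume now $M$ is finite projective over $\bC_{\QQ_p}$; after choosing a presentation $M \oplus M^\perp \cong \bC_{\QQ_p}^{\oplus m}$ we may even assume $M$ is finite free, at the cost of working with a direct summand, which causes no trouble for the Cartan--Serre machinery.

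Now the heart of the argument. Recall $\bC_{\QQ_p} = \bigcup_{c \in (0,1)} \calR^{[c,1)}$, where $\calR^{[c,1)}$ is the ring of analytic functions on the half-open annulus $c \le |\pi| < 1$, itself a Fréchet space for the family of sup-norms $|\cdot|_s$ on the circles $|\pi|=s$ for $s \in [c,1)$. For a fixed basis of $M$ and each such $s$, equip $M$ with the supremum norm $|\cdot|_{s,M}$; the $\Gamma$-cochain differentials are bounded for each $|\cdot|_{s,M}$, and if $s \le r^p$ then $\varphi-1$ is bounded from $(M, |\cdot|_{s,M})$ to $(M, |\cdot|_{s^{1/p},M})$ (the radial shift induced by $\varphi$). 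The key comparison is: for $c \le c' < 1$, the restriction map $\calR^{[c,1)} \to \calR^{[c',1)}$, and more precisely the inclusion of the Banach completions $\calR^{[c,c'']} \hookrightarrow \calR^{[c',c''']}$ of annuli with $c < c' \le c''' < c''$, is completely continuous (this is Example~\ref{exa:Tate completely continuous}, in its two-sided-annulus form, after identifying $\calR^{[c,c'']}$ with a Tate algebra $A\{s/T, T/r\}$ over $A = \QQ_p$). Since $\QQ_p$ is a nonarchimedean field, $\QQ_p$ contains a topologically nilpotent unit $p$, so Lemma~\ref{L:Cartan-Serre} applies directly (no need for the $\FF_p((z))$ trick used in the integral case). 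Thus I would: pick nested radii $c < c' < c''' < c''$ with the $\varphi$-compatibility $\cdot \le (\cdot)^p$ built in, form the double complex $C_{\varphi,\Gamma}(M)$ over $\calR^{[c',c''']}$ and over $\calR^{[c,c'']}$ with the norms above, observe the comparison morphism is a quasi-isomorphism (identity on underlying modules, since these are genuinely the same Robba-ring cohomology after passing to the colimit — here one must check that the colimit over $c$ does not change cohomology, using that $M$ descends to some $\calR^{[c_0,1)}$ and that $\varphi$ and the $\Gamma$-action are defined there), and that each constituent map is completely continuous by the above. Lemma~\ref{L:Cartan-Serre} (via Remark~\ref{R:Cartan-Serre use case}) then gives that each $H^i_{\varphi,\Gamma}(M)$ is finitely generated over $\QQ_p$, hence finite-dimensional.

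The main obstacle I expect is \emph{not} the completely-continuous estimates — those are routine given Example~\ref{exa:Tate completely continuous} — but rather making the passage to the Fréchet/colimit setting clean: one must verify that the cohomology computed on some fixed annulus $\calR^{[c,1)}$ (or its Banach truncation $\calR^{[c,c'']}$) agrees, in the relevant degrees, with the cohomology of the colimit $\bC_{\QQ_p}$-complex, which amounts to a compatibility of the $\varphi$-action with shrinking $c$ and an overconvergence/analytic-continuation statement for cocycles. A secondary obstacle is the dévissage from generalized to projective modules: it relies on coherence of $\bC_{\QQ_p}$ (finite presentation stable under kernels), which I would cite from \cite{part1} rather than reprove. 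Once both are in hand, the Cartan--Serre input is as above and slightly \emph{simpler} than in Theorem~\ref{T:integral ordinary phi-Gamma finiteness} because no auxiliary variable is needed.
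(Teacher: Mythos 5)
Your treatment of the heart of the theorem is the paper's argument: restrict $M$ to a closed annulus model, use the completely continuous inclusions of Example~\ref{exa:Tate completely continuous} for two nested annuli, and apply Lemma~\ref{L:Cartan-Serre} via Remark~\ref{R:Cartan-Serre use case}. You also correctly identify the one piece of external input needed, namely that the $(\varphi,\Gamma)$-cohomology over $\bC_{\QQ_p}$ agrees with the cohomology of the truncated complex $C^\bullet_{\cont}(\Gamma, M_{[c,d]}) \stackrel{\varphi-1}{\to} C^\bullet_{\cont}(\Gamma, M_{[c^{1/p},d]})$ over closed annuli; the paper cites \cite[Theorem~5.7.11]{part2} for exactly this, and you are right that no $\FF_p((z))$ trick is needed since $\QQ_p$ already contains a topologically nilpotent unit.

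The genuine problem is your preliminary d\'evissage to the projective case. You assert an exact sequence $0 \to M' \to F \to M \to 0$ with $F$ a finite \emph{projective} $(\varphi,\Gamma)$-module, but coherence of $\bC_{\QQ_p}$ only produces a module-theoretic surjection $\bC_{\QQ_p}^{\oplus n} \to M$; to make it equivariant you must equip the free module with a $\varphi$-action via the matrix of $\varphi$ on a generating set, and there is no reason that matrix is invertible, i.e.\ no reason the resulting $F$ satisfies $\varphi^* F \cong F$. So the existence of an equivariant projective cover is an unproved (and nontrivial) claim, and the descending induction built on it does not get off the ground. Fortunately the reduction is also unnecessary: the Cartan--Serre input (Remark~\ref{R:fingen cc}) only requires the source to be a finite Banach module over the base ring of the comparison, and over the noetherian affinoid algebra $\calO(A[c,d])$ every finitely generated module is automatically a Banach module (Remark~\ref{R:noetherian}). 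The paper therefore runs the annulus argument directly on the generalized (finitely presented) module $M_{[c,d]}$ with no projectivity hypothesis; deleting your d\'evissage paragraph and doing the same repairs the proof. (Your further reduction to finite free via a complement $M^\perp$ has the same defect, since the complement carries no $(\varphi,\Gamma)$-structure, but it too is not needed.)
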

\begin{proof}
As in the proof of Theorem~\ref{T:integral ordinary phi-Gamma finiteness}, we see that
$H^i_{\varphi,\Gamma}(M) = 0$ for $i > 2$. To prove that the groups $H^i_{\varphi,\Gamma}(M)$ are finite $\Qp$-modules for $i=0,1,2$, 
for any interval $I$,
let $A(I)$ denote the rigid analytic annulus $\left| \pi \right| \in I$ over $\QQ_p$,
omitting the parentheses when $I$ is written as an explicit interval with endpoints.
For some $c_1 > 0$, we may 
then realize $M$ as a module $M_{[c_1,1)}$ over $\calO(A[c_1,1))$ equipped with an action of $\Gamma$ and a compatible isomorphism 
\[
\varphi^* M_{[c_1,1)} \cong M_{[c_1,1)} \otimes_{\calO(A[c_1,1))} \calO(A[c_1^{1/p},1)).
\]
For $d_1 \in [c_1^{1/p},1)$, put 
\[
M_{[c_1,d_1]} = M_{[c_1,1)} \otimes_{\calO(A[c_1,1))} \calO(A[c_1,d_1]);
\]
by \cite[Theorem~5.7.11]{part2}, the cohomology of 
$C_{\varphi, \Gamma}(M)$
is also computed by the double complex
\[
0 \to C^\bullet_{\cont}(\Gamma, M_{[c_1,d_1]}) \stackrel{\varphi-1}{\to} C^\bullet_{\cont}(\Gamma, M_{[c_1^{1/p},d_1]}) \to 0.
\]
We may thus deduce the claim by computing the cohomology using two intervals $[c_1, d_1], [c_2, d_2]$ with one contained in the interior of the other, then applying
Lemma~\ref{L:Cartan-Serre} (keeping in mind Remark~\ref{R:Cartan-Serre use case}).
\end{proof}

\begin{remark}
Both Theorem~\ref{T:integral ordinary phi-Gamma finiteness}
and Theorem~\ref{T:ordinary phi-Gamma finiteness} promote immediately to
$(\varphi^a, \Gamma)$-modules for any positive integer $a$: if $M$ is such an object, then
$M \oplus \varphi^* M \oplus \cdots \oplus (\varphi^{a-1})^* M$ carries an action of $\varphi$ and its $(\varphi, \Gamma)$-cohomology coincides with the $(\varphi^a, \Gamma)$-cohomology of $M$. Similar considerations will hold later in the paper; to simplify the exposition, we make all statements exclusively for $(\varphi, \Gamma)$-modules.
\end{remark}

\begin{remark}
Theorem~\ref{T:integral ordinary phi-Gamma finiteness}
and Theorem~\ref{T:ordinary phi-Gamma finiteness} also promote immediately to theorems
concerning $(\varphi, \Gamma)$-modules for which the base field $\QQ_p$ has been replaced by a finite extension (this being the level of generality considered in \cite{herr1},
\cite{liu-herr}, and elsewhere).
\end{remark}

\begin{remark}
The argument of Theorem~\ref{T:ordinary phi-Gamma finiteness} can also be used to obtain the finiteness theorem for cohomology of arithmetic families of $(\varphi, \Gamma)$-modules given by the first author with Pottharst and Xiao in \cite{kpx}. The proof therein is more subtle; it involves reducing to the result of \cite{liu-herr} via an intricate d\'evissage argument and a duality computation.
\end{remark}

\section{Relative \texorpdfstring{$(\varphi, \Gamma)$}{(phi, Gamma)}-modules}

We continue by summarizing the main points of the theory of relative $(\varphi, \Gamma)$-modules on rigid analytic spaces, as developed in \cite{part1, part2}.

\begin{defn}
For $X$ an adic space, let $X_{\proet}$ denote the \emph{pro-\'etale site} in the sense of \cite[\S 3]{scholze2}, \cite[\S 9.1]{part1}; that is, a basic pro-\'etale open in $X$ is a tower of finite \'etale surjective covers over an \'etale open. (We do not consider the finer pro-\'etale topology used in \cite{scholze-berkeley}.)
If $p$ is topologically nilpotent on $X$, then the pro-\'etale topology is generated by
towers of finite \'etale covers of affinoids for which the completed direct limit of the associated rings is a perfectoid ring \cite[Lemma~3.3.26]{part2}; such pro-\'etale opens are called \emph{perfectoid subdomains} of $X$ (or better, of $X_{\proet}$).
\end{defn}

We recall some basic facts about pseudocoherent modules from \cite{part2}.
\begin{defn} \label{D:pseudocoherent}
A module $M$ over a ring $R$ is \emph{pseudocoherent} if it admits a projective resolution (possibly of infinite length) by finite projective modules. For example, any finitely generated module over a noetherian ring is pseudocoherent, as is any finitely presented module over a coherent ring.
A pseudocoherent module over a Banach ring is complete for its natural topology \cite[Corollary~1.2.9]{part2}.
Pseudocoherent modules form a stack over stably uniform adic Banach rings for the analytic topology and the \'etale topology \cite[Theorem~2.5.6, Corollary~2.5.7]{part2}
and over perfectoid adic Banach rings 
for the pro-\'etale topology \cite[Corollary~3.4.9]{part2}.

A module $M$ over $R$ is \emph{pseudoflat} if for every $R$-module $N$ admitting a partial projective resolution $P_2 \to P_1 \to P_0 \to M \to 0$ with $P_0, P_1, P_2$ finite projective, we have $\Tor_1^R(M,N) = 0$; this implies that tensoring with $M$ is an exact functor from pseudocoherent $R$-modules to $R$-modules (or to pseudocoherent $M$-modules if $M$ is itself an $R$-algebra).
For example, every flat module is pseudoflat, and conversely if $R$ is coherent (because omitting the condition of finiteness of $P_2$ leads back to the definition of a flat module). While rational localizations of affinoid algebras are flat, rational localizations of arbitrary adic Banach rings are only known to be pseudoflat
\cite[Theorem~2.4.8]{part2}.
\end{defn}

\begin{defn}
For any adic space $X$ on which $p$ is topologically nilpotent, define the $p$-adic period sheaves $\tilde{\bA}_X$ and $\tilde{\bC}_X$ on the pro-\'etale site $X_{\proet}$ as in \cite[Definition~9.3.3]{part1}; by construction, both admit a \emph{Frobenius endomorphism} $\varphi$. 
For real numbers $r,s$ such that $0 < s \leq r$, we also have a sheaf
$\tilde{\bC}^{[s,r]}_X$ on $X_{\proet}$ defined as in \cite[Definition~9.3.3]{part1}.
We have an inclusion $\tilde{\bC}^{[s,r]}_X \to \tilde{\bC}^{[s',r']}_X$ whenever
$[s',r'] \subseteq [s,r]$, and a Frobenius endomorphism
$\varphi: \tilde{\bC}^{[s,r]}_X \to \tilde{\bC}^{[s/p,r/p]}_X$.
Pseudocoherent $\tilde{\bA}_X$-modules and pseudocoherent $\tilde{\bC}^{[s,r]}_X$-modules form stacks over perfectoid adic Banach rings for the pro-\'etale topology
\cite[Theorem~4.3.3]{part2}.
\end{defn}

\begin{remark}
As a quick summary, we recall that for $Y$ a perfectoid subdomain of $X$, $\tilde{\bA}_X$ evaluates to the ring of Witt vectors over the ring of sections of the tilted completed structure sheaf on $Y$,
while $\tilde{\bC}^{[s,r]}_X$ evaluates to an ``extended Robba ring'' derived from the Witt vectors by restricting to elements satisfying a growth condition, inverting $p$, then completing for a suitable Banach norm. The evaluation of $\tilde{\bC}_X$ is obtained from the evaluations of the $\tilde{\bC}^{[s,r]}_X$ by first taking an inverse limit as $s \to 0^+$, then a direct limit as $r \to 0^+$.
\end{remark}

\begin{remark} \label{R:pseudoflat}
For $Y$ a perfectoid subdomain of a perfectoid space $X$,
the morphism $\tilde{\bA}_{\tilde{X}} \to \tilde{\bA}_{\tilde{X}'}$
(resp. $\tilde{\bC}^{[s,r]}_{\tilde{X}} \to \tilde{\bC}^{[s,r]}_{\tilde{X}'}$)
is pseudoflat \cite[Proposition~4.2.3]{part2}.
\end{remark}

\begin{lemma} \label{L:base extension cohomology Robba}
Let $(A,A^+)$ be a perfectoid adic Banach ring. Let $\Spa(B,B^+)$ be a perfectoid subdomain of $\Spa(A,A^+)$. Let $C^\bullet$ be a complex of Banach modules over $\tilde{\bC}^{[s,r]}_A$.
Suppose that $C$ is a perfect complex of $\tilde{\bC}^{[s,r]}_A$-modules with pseudocoherent cohomology
and that $C \widehat{\otimes}_{\tilde{\bC}^{[s,r]}_A} \tilde{\bC}^{[s,r]}_B$ is a perfect complex of $\tilde{\bC}^{[s,r]}_B$-modules with pseudocoherent cohomology. Then the natural morphisms 
\[
h^\bullet(C) \otimes_{\tilde{\bC}^{[s,r]}_A} \tilde{\bC}^{[s,r]}_B \to h^\bullet(C \widehat{\otimes}_{\tilde{\bC}^{[s,r]}_A} \tilde{\bC}^{[s,r]}_B)
\]
are isomorphisms.
\end{lemma}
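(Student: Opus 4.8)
The plan is to reduce the statement to the exactness of base change along $\tilde{\bC}^{[s,r]}_A \to \tilde{\bC}^{[s,r]}_B$ on pseudocoherent cohomology. Since $C$ is a perfect complex, choose a quasi-isomorphism $P^\bullet \to C$ with $P^\bullet$ a bounded complex of finite projective $\tilde{\bC}^{[s,r]}_A$-modules. For such a $P^\bullet$ the completed tensor product $P^\bullet \widehat{\otimes}_{\tilde{\bC}^{[s,r]}_A} \tilde{\bC}^{[s,r]}_B$ agrees termwise with the algebraic tensor product, each term being a finite projective $\tilde{\bC}^{[s,r]}_B$-module and hence already complete; moreover, $P^\bullet$ being a bounded complex of flat modules, this complex computes the derived base change $C \otimes^{L}_{\tilde{\bC}^{[s,r]}_A} \tilde{\bC}^{[s,r]}_B$. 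Using the hypothesis that $C \widehat{\otimes}_{\tilde{\bC}^{[s,r]}_A} \tilde{\bC}^{[s,r]}_B$ is itself perfect with pseudocoherent cohomology (together with the formalism of base change for perfect complexes from \cite{part2}), one identifies $C \widehat{\otimes}_{\tilde{\bC}^{[s,r]}_A} \tilde{\bC}^{[s,r]}_B$ up to quasi-isomorphism with $P^\bullet \otimes_{\tilde{\bC}^{[s,r]}_A} \tilde{\bC}^{[s,r]}_B$.

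Granting this reduction, the heart of the argument is the vanishing
\[
\Tor^{\tilde{\bC}^{[s,r]}_A}_i\!\left(h^p(C), \tilde{\bC}^{[s,r]}_B\right) = 0 \qquad (i \geq 1,\ \text{all } p).
\]
Indeed, the morphism $\tilde{\bC}^{[s,r]}_A \to \tilde{\bC}^{[s,r]}_B$ is pseudoflat by Remark~\ref{R:pseudoflat} (i.e., \cite[Proposition~4.2.3]{part2}), so by Definition~\ref{D:pseudocoherent} the functor $- \otimes_{\tilde{\bC}^{[s,r]}_A} \tilde{\bC}^{[s,r]}_B$ is exact on the abelian category of pseudocoherent $\tilde{\bC}^{[s,r]}_A$-modules. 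Since each $h^p(C)$ is pseudocoherent by hypothesis, one may resolve it by finite projective modules; the syzygies of such a resolution are again pseudocoherent, so the resolution remains exact after $- \otimes_{\tilde{\bC}^{[s,r]}_A} \tilde{\bC}^{[s,r]}_B$, whence the claimed vanishing by dimension shifting.

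Finally, feed this vanishing into the hyper-$\Tor$ (base change) spectral sequence
\[
E_2^{p,q} = \Tor^{\tilde{\bC}^{[s,r]}_A}_{-q}\!\left(h^p(C), \tilde{\bC}^{[s,r]}_B\right) \;\Rightarrow\; h^{p+q}\!\left(P^\bullet \otimes_{\tilde{\bC}^{[s,r]}_A} \tilde{\bC}^{[s,r]}_B\right)
\]
attached to the bounded complex of flats $P^\bullet$. By the vanishing it is concentrated in the row $q=0$, so it degenerates and its edge map $h^p(C) \otimes_{\tilde{\bC}^{[s,r]}_A} \tilde{\bC}^{[s,r]}_B \to h^p\!\left(C \widehat{\otimes}_{\tilde{\bC}^{[s,r]}_A} \tilde{\bC}^{[s,r]}_B\right)$ is an isomorphism; a routine check identifies this edge map with the natural morphism in the statement. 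I expect the main obstacle to be the bookkeeping of the first paragraph: one must be sure that the \emph{literal} completed tensor product appearing in the statement genuinely computes the derived base change, i.e.\ is insensitive to the choice of finite-projective representative. This is precisely why the hypothesis that $C \widehat{\otimes}_{\tilde{\bC}^{[s,r]}_A} \tilde{\bC}^{[s,r]}_B$ be perfect with pseudocoherent cohomology is imposed; once both complexes have been replaced by their finite-projective representatives, the vanishing of higher $\Tor$ and the spectral sequence argument are formal.
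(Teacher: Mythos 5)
Your second and third paragraphs are sound: since $\Spa(B,B^+)$ is by hypothesis a perfectoid subdomain of the perfectoid $\Spa(A,A^+)$, Remark~\ref{R:pseudoflat} applies directly, pseudoflatness plus dimension shifting gives the vanishing of higher $\Tor$ against pseudocoherent modules, and the hyper-$\Tor$ spectral sequence then degenerates as you say. The genuine gap is in your first paragraph, and it is not mere bookkeeping. The completed tensor product $-\widehat{\otimes}_{\tilde{\bC}^{[s,r]}_A} \tilde{\bC}^{[s,r]}_B$ is not a derived functor and does not preserve quasi-isomorphisms of complexes of Banach modules: a strict short exact sequence of Banach modules remains right exact after completed base change but can lose injectivity on the left, so the acyclicity of $\Cone(P^\bullet \to C)$ does not make $P^\bullet \otimes_{\tilde{\bC}^{[s,r]}_A} \tilde{\bC}^{[s,r]}_B \to C \widehat{\otimes}_{\tilde{\bC}^{[s,r]}_A} \tilde{\bC}^{[s,r]}_B$ a quasi-isomorphism. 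Nor does the hypothesis that $C \widehat{\otimes}_{\tilde{\bC}^{[s,r]}_A} \tilde{\bC}^{[s,r]}_B$ is perfect with pseudocoherent cohomology close this: knowing the target is perfect says nothing about whether the particular comparison map from $P^\bullet \otimes_{\tilde{\bC}^{[s,r]}_A} \tilde{\bC}^{[s,r]}_B$ induces isomorphisms on cohomology. Identifying the literal completed tensor product with the derived base change is exactly the analytic content of the lemma, and your argument defers it to an unspecified ``formalism.''

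The paper supplies precisely this identification by a different device: it takes the completed tensor product over $\QQ_p$ with the $p$-cyclotomic extension $L$, after which $\tilde{\bC}^{[s,r]}_A$ and $\tilde{\bC}^{[s,r]}_B$ become perfectoid and the map between them becomes a perfectoid subdomain; in that setting the base-change and glueing machinery for pseudocoherent complexes over perfectoid rings from \cite{part2} applies (there, algebraic and completed base change of pseudocoherent objects agree because pseudocoherent modules are complete and form a stack), and one descends back along a splitting $L \to \QQ_p$. If you wish to keep your spectral-sequence skeleton, you would still need to route your first paragraph through this (or an equivalent) reduction; as written, the key analytic step is asserted rather than proved.
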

\begin{proof}
Upon taking the completed tensor product
over $\QQ_p$ with the $p$-cyclotomic extension $L$, the rings
$\tilde{\bC}^{[s,r]}_A$, $\tilde{\bC}^{[s,r]}_B$  become perfectoid \cite[Proposition~4.1.13]{part2} and the map between them becomes a perfectoid subdomain 
\cite[Lemma~4.2.2]{part2}. We may then apply Remark~\ref{R:pseudoflat}, then use a splitting $L \to \QQ_p$ of the inclusion to recover the desired isomorphisms.
\end{proof}

\begin{defn}
A \emph{$(\varphi, \Gamma)$-module} (resp.\ a \emph{pseudocoherent $(\varphi, \Gamma)$-module})
over $\tilde{\bA}_X$ or $\tilde{\bC}_X$ on $X$ is a sheaf $\calF$ of modules which is locally represented by a finite projective (resp.\ pseudocoherent) module over the corresponding ring, together with an isomorphism $\varphi^* \calF \to \calF$. We sometimes refer to a $(\varphi, \Gamma)$-module also as a \emph{projective $(\varphi, \Gamma)$-module} for emphasis.
Denote by $\CPhi_X$ the category of pseudocoherent $(\varphi, \Gamma)$-modules over $\tilde{\bC}_X$.
\end{defn}

\begin{remark}
In this context, the symbol $\Gamma$ in the term \emph{$(\varphi, \Gamma)$-module} is but a skeuomorph: the former role of $\Gamma$, as a provider of descent data, is now assumed by the pro-\'etale topology.
\end{remark}

\begin{remark} \label{R:coherent type A}
The module structure of pseudocoherent $(\varphi, \Gamma)$-modules over $\tilde{\bA}_X$ is quite simple: the only possible torsion is $p$-power torsion, and a $p$-torsion-free object is projective \cite[Lemma~4.5.4]{part2} (see Theorem~\ref{T:finitely generated to pseudocoherent type A} for an even stronger statement).
In particular, without risk of confusion, we may refer to these objects also as \emph{coherent} $(\varphi, \Gamma)$-modules over $\tilde{\bA}_X$. The situation in type $\tilde{\bC}$ is a bit subtler; see Remark~\ref{R:coherent type C}.
\end{remark}

\begin{remark} \label{R:type C interval}
In case $s \leq r/p$, the category $\CPhi_X$ is equivalent to the category of
pseudocoherent $\tilde{\bC}^{[s,r]}_X$-modules $\calF_{[s,r]}$ equipped with isomorphisms
\[
\varphi^* \calF_{[s,r]} \otimes_{\tilde{\bC}^{[s/p,r/p]}_X} \tilde{\bC}^{[s,r/p]}_X
\cong
\calF_{[s,r]} \otimes_{\tilde{\bC}^{[s,r]}_X} \tilde{\bC}^{[s,r/p]}_X;
\]
see \cite[Theorem~4.6.10]{part2}. 
(We sometimes refer to these objects as \emph{pseudocoherent $(\varphi, \Gamma)$-modules}
over $\tilde{\bC}^{[s,r]}_X$.)
In particular, for any perfectoid subdomain $Y$ of $X$,
any pseudocoherent $(\varphi, \Gamma)$-module over $X$ is represented by a pseudocoherent $\tilde{\bC}_X(Y)$-module.
Moreover, if $\calF \in \CPhi_X$ corresponds to $\calF_{[s,r]}$ in this fashion, then the complexes
\[
0 \to \calF \stackrel{\varphi-1}{\to} \calF \to 0,
\qquad
0 \to \calF_{[s,r]} \stackrel{\varphi-1}{\to} \calF_{[s,r/p]} \to 0
\]
have the same cohomology sheaves \cite[Theorem~4.6.9]{part2}.
\end{remark}

\begin{defn}
By a \emph{$\ZZ_p$-local system} or \emph{$\QQ_p$-local system} over an adic space $X$,
we will mean a finite free module over the corresponding locally constant sheaf
$\underline{\ZZ_p}$ or $\underline{\QQ_p}$ (defining locally constant sheaves in the sense of \cite[Definition~1.4.10]{part1}).
\end{defn}

\begin{theorem} \label{T:Artin-Schreier}
The functor $T \mapsto \calF = T \otimes_{\underline{\ZZ_p}} \tilde{\bA}_X$ defines an equivalence of categories between locally finite $\underline{\ZZ_p}$-modules on $X_{\proet}$
and pseudocoherent $(\varphi, \Gamma)$-modules
over $\tilde{\bA}_X$, with the one-sided inverse being $\calF \mapsto \calF^{\varphi}$; in particular, $\ZZ_p$-local systems correspond to projective $(\varphi, \Gamma)$-modules.
Moreover, the pro-\'etale cohomology of $T$ is computed by the $(\varphi, \Gamma)$-hypercohomology of $\calF$.
\end{theorem}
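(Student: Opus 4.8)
The plan is to reduce both assertions to local computations over perfectoid subdomains and then globalize by descent. For the equivalence of categories, I would first observe that both sides are stacks on $X_{\proet}$: pseudocoherent $(\varphi,\Gamma)$-modules over $\tilde{\bA}_X$ form a stack over perfectoid adic Banach rings for the pro-\'etale topology (by the results recalled after Definition~\ref{D:pseudocoherent}), and locally finite $\underline{\ZZ_p}$-modules glue trivially. Since perfectoid subdomains generate $X_{\proet}$, it suffices to exhibit, on each perfectoid subdomain $Y = \Spa(A,A^+)$, a natural equivalence compatible with restriction. There $\tilde{\bA}_X(Y) = W(\tilde R)$ for a perfectoid ring $\tilde R$ of characteristic $p$, and the required statement is the classification, carried out in \cite{part1, part2}, of pseudocoherent $\varphi$-modules over $W(\tilde R)$: the functor $T \mapsto T \otimes_{\ZZ_p} W(\tilde R)$ is an equivalence onto such modules with quasi-inverse $M \mapsto M^{\varphi}$, restricting to an equivalence of finite free $\ZZ_p$-modules with finite projective $\varphi$-modules (cf.\ Remark~\ref{R:coherent type A} for the $p$-torsion structure that forces a torsion-free object to be projective). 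Passing to a cofinal system of finite \'etale covers of $Y$ (which are pro-\'etale and compute $\underline{\ZZ_p}$) upgrades this $\ZZ_p$-module statement to the corresponding statement about locally constant sheaves, and descent along the two stacks then matches the global objects.

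Full faithfulness of $T \mapsto \calF$ and the identity $(\calF_T)^{\varphi} = T$ both follow formally from the sheaf-level identity $\tilde{\bA}_X^{\varphi} = \underline{\ZZ_p}$ together with the local freeness of $T$ over $\underline{\ZZ_p}$, which lets one commute the formation of $\varphi$-invariants past $-\otimes_{\underline{\ZZ_p}} \tilde{\bA}_X$; this is why $\calF \mapsto \calF^{\varphi}$ is at least a one-sided inverse, and the essential surjectivity just established promotes it to a quasi-inverse. The refinement to $\ZZ_p$-local systems is then immediate: $\calF$ is projective exactly when the corresponding $T$ is finite free.

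For the cohomology claim, I would use the Artin--Schreier exact sequence of pro-\'etale sheaves
\[
0 \to \underline{\ZZ_p} \to \tilde{\bA}_X \xrightarrow{\varphi-1} \tilde{\bA}_X \to 0,
\]
established in \cite{part1} (compare \cite{scholze2} in the geometric setting): left-exactness is the identity $\tilde{\bA}_X^{\varphi} = \underline{\ZZ_p}$, and surjectivity of $\varphi-1$ holds because any Artin--Schreier equation over a section of $\tilde{\bA}_X$ has a solution after a finite \'etale, hence pro-\'etale, cover. Tensoring over $\underline{\ZZ_p}$ with the locally free sheaf $T$ preserves exactness and yields $0 \to T \to \calF \xrightarrow{\varphi-1} \calF \to 0$; thus $T$ is quasi-isomorphic to the two-term complex $[\calF \xrightarrow{\varphi-1} \calF]$ placed in degrees $0$ and $1$, and applying pro-\'etale cohomology (via the hypercohomology spectral sequence) identifies $H^i_{\proet}(X,T)$ with the $(\varphi,\Gamma)$-hypercohomology of $\calF$. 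The main obstacle is the essential surjectivity step — showing that an arbitrary pseudocoherent $(\varphi,\Gamma)$-module over $\tilde{\bA}_X$ is locally the extension of scalars of a locally finite $\underline{\ZZ_p}$-module; this is exactly where one must invoke the deep machinery of \cite{part1, part2} (almost purity, pro-\'etale descent of pseudocoherent modules, and \'etale-ness of $\varphi$-modules over $\tilde{\bA}$), whereas the sheaf identity for $\varphi$-invariants and the hypercohomology bookkeeping are formal once that input is in hand.
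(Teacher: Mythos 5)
Your proposal is correct and follows the same route as the source the paper relies on: the paper's ``proof'' of Theorem~\ref{T:Artin-Schreier} is simply the citation to \cite[Corollary~4.5.8]{part2}, and your sketch (pro-\'etale descent of pseudocoherent modules, the local classification of $\varphi$-modules over $W(\tilde R)$ for $\tilde R$ perfectoid, the identity $\tilde{\bA}_X^{\varphi}=\underline{\ZZ_p}$, and the Artin--Schreier sequence tensored with $T$) is a faithful outline of the argument that citation encapsulates. The only nitpick is that you invoke local \emph{freeness} of $T$ when tensoring the Artin--Schreier sequence, whereas the theorem allows arbitrary locally finite $\underline{\ZZ_p}$-modules; exactness still holds there because $\tilde{\bA}_X$ is $p$-torsion-free, hence flat over $\underline{\ZZ_p}$.
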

\begin{proof}
See \cite[Corollary~4.5.8]{part2}.
\end{proof}

\begin{theorem} \label{T:Artin-Schreier rational}
The functor $V \mapsto \calF = V \otimes_{\underline{\QQ_p}} \tilde{\bC}_X$ defines a fully faithful functor from $\QQ_p$-local systems on $X$ to $(\varphi, \Gamma)$-modules
over $\tilde{\bC}_X$, with the one-sided inverse being $\calF \mapsto \calF^{\varphi}$.
Moreover, the pro-\'etale cohomology of $V$ is computed by the $(\varphi, \Gamma)$-hypercohomology of $\calF$.
\end{theorem}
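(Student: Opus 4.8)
The plan is to deduce Theorem~\ref{T:Artin-Schreier rational} from the $\ZZ_p$-version (Theorem~\ref{T:Artin-Schreier}) by an isogeny argument, working locally on $X_{\proet}$ and then checking compatibility with restriction. First I would recall that a $\QQ_p$-local system $V$ on $X$ is, by definition, a finite free $\underline{\QQ_p}$-module; locally on $X_{\proet}$ (indeed, pro-\'etale locally it suffices to pass to a single perfectoid subdomain, or even work $\underline{\QQ_p}$-locally) there is a $\ZZ_p$-lattice, i.e.\ a $\ZZ_p$-local system $T$ with $T \otimes_{\underline{\ZZ_p}} \underline{\QQ_p} \cong V$. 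Applying Theorem~\ref{T:Artin-Schreier} to $T$ yields a projective $(\varphi,\Gamma)$-module $\calF_T = T \otimes_{\underline{\ZZ_p}} \tilde{\bA}_X$ over $\tilde{\bA}_X$. I would then define $\calF = \calF_T \otimes_{\tilde{\bA}_X} \tilde{\bC}_X$ and observe that, because $\tilde{\bC}_X$ is an $\tilde{\bA}_X[p^{-1}]$-algebra, inverting $p$ absorbs the choice of lattice: any two lattices $T, T'$ satisfy $p^N T \subseteq T' \subseteq p^{-N} T$ for some $N$, and the induced maps become isomorphisms after $\otimes \tilde{\bC}_X$, giving $\calF \cong V \otimes_{\underline{\QQ_p}} \tilde{\bC}_X$ functorially. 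Gluing these local pieces produces the $(\varphi,\Gamma)$-module over $\tilde{\bC}_X$ globally; the gluing is legitimate because pseudocoherent $\tilde{\bC}_X$-modules (a fortiori projective ones) form a stack for the pro-\'etale topology over perfectoid Banach rings, as recorded in the discussion of Definition~\ref{D:pseudocoherent} and in Remark~\ref{R:type C interval}.

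Next I would treat full faithfulness. Since $\tilde{\bC}_X$ is flat (pseudoflat suffices for the relevant $\Hom$-computation) over $\underline{\QQ_p}$ and $\tilde{\bC}_X^{\varphi} = \underline{\QQ_p}$ (which is the content of the Artin--Schreier sequence $0 \to \underline{\QQ_p} \to \tilde{\bC}_X \xrightarrow{\varphi-1} \tilde{\bC}_X \to 0$, compare the sequence displayed in the introduction), one has for $V, V'$ two $\QQ_p$-local systems
\[
\Hom_{\CPhi_X^{\proj}}(V \otimes \tilde{\bC}_X, V' \otimes \tilde{\bC}_X) = \bigl(\Hom_{\underline{\QQ_p}}(V, V') \otimes_{\underline{\QQ_p}} \tilde{\bC}_X\bigr)^{\varphi = 1} = \Hom_{\underline{\QQ_p}}(V, V'),
\]
taking global sections over $X_{\proet}$; this also exhibits $\calF \mapsto \calF^{\varphi}$ as the one-sided inverse on the essential image. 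Concretely, $\calF^{\varphi} = \ker(\varphi - 1 \colon \calF \to \calF)$, and applying the Artin--Schreier sequence with coefficients twisted by $\calF_T$ over $\tilde{\bA}_X$ first (where $\calF_T^{\varphi} = T$ by Theorem~\ref{T:Artin-Schreier}), then inverting $p$, gives $\calF^{\varphi} = T[p^{-1}] = V$.

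Finally, for the cohomology statement I would invoke Theorem~\ref{T:Artin-Schreier}: the pro-\'etale cohomology of $T$ is the $(\varphi,\Gamma)$-hypercohomology of $\calF_T$, i.e.\ the hypercohomology of $\calF_T \xrightarrow{\varphi-1} \calF_T$. Tensoring this two-term complex with the flat (pseudoflat) map $\tilde{\bA}_X \to \tilde{\bC}_X$ and using that $\calF = \calF_T \otimes \tilde{\bC}_X$, one gets that the $(\varphi,\Gamma)$-hypercohomology of $\calF$ computes $R\Gamma(X_{\proet}, T) \otimes_{\ZZ_p} \QQ_p = R\Gamma(X_{\proet}, V)$; here I would use that tensoring with $\QQ_p$ commutes with the relevant cohomology, which for $\QQ_p$-local systems is part of their definition as sheaves on the pro-\'etale site, or alternatively note that on each perfectoid subdomain the relevant rings are flat over $\ZZ_p$. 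The main obstacle, and the one point needing care, is the gluing step: a $\QQ_p$-local system need not admit a \emph{global} $\ZZ_p$-lattice (this is exactly the phenomenon flagged in the introduction, e.g.\ the Tate curve), so the construction of $\calF$ is genuinely local and one must verify that the locally-constructed $(\varphi,\Gamma)$-modules over $\tilde{\bC}_X$ descend; this is where the stack property of pseudocoherent $\tilde{\bC}_X$-modules for the pro-\'etale topology (and the equivalence of Remark~\ref{R:type C interval} reducing to $\tilde{\bC}_X^{[s,r]}$-modules on perfectoid subdomains) does the essential work, together with the fact that the descent data are canonical because they come from the $p$-isogeny identification of lattices.
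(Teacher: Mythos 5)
The paper does not give an argument here: it simply cites \cite[Theorem~4.5.11]{part2}. Your proposal supplies an actual argument whose overall strategy (local $\ZZ_p$-lattices plus the Artin--Schreier sequence for $\tilde{\bC}_X$) is the right one, but it rests at three separate points on a claim that is false: that $\tilde{\bC}_X$ is an $\tilde{\bA}_X[p^{-1}]$-algebra, so that one can pass from the type-$\bA$ statements to the type-$\bC$ statements by ``tensoring along $\tilde{\bA}_X \to \tilde{\bC}_X$ and inverting $p$.'' There is no natural ring homomorphism $\tilde{\bA}_X \to \tilde{\bC}_X$. As recalled in the paper, $\tilde{\bA}_X$ evaluates to the full ring of Witt vectors of the tilted structure sheaf, whereas $\tilde{\bC}^{[s,r]}_X$ is built only from Witt vectors satisfying a growth (overconvergence) condition; already in the classical case of a point, $W(\FF_p((t))^{\mathrm{perf}\wedge})$ does not embed into the extended Robba ring --- only the overconvergent subring $\tilde{\bA}^{\dagger}$ does. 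Consequently, your definition $\calF = \calF_T \otimes_{\tilde{\bA}_X} \tilde{\bC}_X$, your computation of $\calF^{\varphi}$ by ``applying the $\tilde{\bA}_X$-sequence and then inverting $p$,'' and your cohomology comparison by ``tensoring the two-term complex with $\tilde{\bA}_X \to \tilde{\bC}_X$'' are all, as written, meaningless operations.

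Each step can be repaired by working directly with $\underline{\QQ_p} \to \tilde{\bC}_X$, but then the essential input is no longer Theorem~\ref{T:Artin-Schreier}; it is the exactness of $0 \to \underline{\QQ_p} \to \tilde{\bC}_X \xrightarrow{\varphi-1} \tilde{\bC}_X \to 0$ as pro-\'etale sheaves (in particular $\tilde{\bC}_X^{\varphi=1} = \underline{\QQ_p}$), which is a genuinely separate theorem about the extended Robba ring, not a formal consequence of the Witt-vector statement. Granting that input: $\calF = V \otimes_{\underline{\QQ_p}} \tilde{\bC}_X$ is already a globally defined sheaf, and the local lattice only serves to verify local finite projectivity, so the descent/gluing step you flag as the ``main obstacle'' is not actually needed; full faithfulness follows from $(W \otimes_{\underline{\QQ_p}} \tilde{\bC}_X)^{\varphi=1} = W$ for $W = \mathcal{H}om(V,V')$ exactly as in your display; and the cohomology statement follows by tensoring the Artin--Schreier sequence with the locally free sheaf $V$ over $\underline{\QQ_p}$ (not over $\tilde{\bA}_X$) and taking hypercohomology of the resulting resolution $\calF \xrightarrow{\varphi-1} \calF$ of $V$. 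With those substitutions your argument becomes correct and matches the proof of the cited result in \cite{part2}.
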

\begin{proof}
See \cite[Theorem~4.5.11]{part2}.
\end{proof}

\begin{theorem} \label{T:finitely generated to pseudocoherent type A}
Let $X$ be an affinoid space over $K$.
Let $\psi$ be a finite \'etale tower over $X$ whose total space $Y$ is perfectoid. 
Let $\calF$ be a sheaf of $\tilde{\bA}_X$-modules. Then $\calF$ is pseudocoherent if and only if its
 restriction to $Y$ is represented by a finitely generated $\tilde{\bA}(Y)$-module.
\end{theorem}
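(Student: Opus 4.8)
The plan is to reduce the pseudocoherence of $\calF$ to a statement about the finitely generated $\tilde{\bA}(Y)$-module obtained by pulling back along $\psi$, using the descent machinery already in place. First I would recall that, by construction, the site $Y_{\proet}$ (or rather the perfectoid subdomain $Y$ of $X_{\proet}$) gives a pro-\'etale cover of $X$ on which the period sheaf $\tilde{\bA}_X$ evaluates to a ring of Witt vectors over a perfectoid tilt. The ``only if'' direction is essentially immediate: if $\calF$ is pseudocoherent over $\tilde{\bA}_X$, then its restriction to $Y$ is a pseudocoherent $\tilde{\bA}(Y)$-module by functoriality of the evaluation, and a pseudocoherent module admitting a resolution by \emph{finite} projectives is in particular finitely generated. (Here one uses that $\tilde{\bA}(Y)$ is a Banach ring of the type considered in \cite{part1,part2}, so pseudocoherent modules are complete for the natural topology by \cite[Corollary~1.2.9]{part2}.)

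For the ``if'' direction, the strategy is descent along the pro-\'etale cover $Y \to X$. Suppose the restriction of $\calF$ to $Y$ is represented by a finitely generated $\tilde{\bA}(Y)$-module $M$. The key input is Remark~\ref{R:coherent type A}, which tells us that the module theory of sheaves of $\tilde{\bA}_X$-modules is very rigid: the only torsion is $p$-power torsion, and a $p$-torsion-free finitely generated object is automatically projective by \cite[Lemma~4.5.4]{part2}. So I would first handle the $p$-torsion-free case: there $M$ is finitely generated and $p$-torsion-free over $\tilde{\bA}(Y)$, hence finite projective, hence (since pseudocoherent modules form a stack over perfectoid adic Banach rings for the pro-\'etale topology, \cite[Corollary~3.4.9]{part2}, and finite projective modules descend) $\calF$ descends to a finite projective — in particular pseudocoherent — $\tilde{\bA}_X$-module. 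For the general case, one d\'evisses by the $p$-adic filtration: write $\calF$ as an extension involving $\calF[p^\infty]$ and $\calF/\calF[p^\infty]$, observe that modulo $p$ one lands in a situation over a perfectoid ring of characteristic $p$ where finitely generated modules over the relevant sheaf ring are better behaved, and use the stack property again to descend pseudocoherence of each graded piece back down from $Y$ to $X$. Assembling the extension then gives pseudocoherence of $\calF$.

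The main obstacle I anticipate is the bookkeeping required to pass from ``finitely generated after restriction to the single perfectoid subdomain $Y$'' to ``pseudocoherent as a sheaf on all of $X_{\proet}$'': a priori descent of pseudocoherence requires compatible data on overlaps $Y \times_X Y$, and one must check that the single hypothesis on $Y$ propagates — this is where the strong structural results of \cite{part2} on the category of $(\varphi,\Gamma)$-modules over $\tilde{\bA}_X$ (and the fact that $Y \to X$ is a finite \'etale tower, so the overlaps are again towers with perfectoid total space) do the real work. The $p$-torsion part is the delicate piece, since there one cannot simply invoke projectivity; I expect to reduce it, after base change modulo $p$, to a finiteness statement over perfectoid rings in characteristic $p$ where finitely generated and pseudocoherent coincide up to the stack property, and then lift. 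A secondary subtlety is verifying that ``finitely generated'' on $Y$ really is the right hypothesis — i.e. that one does not need finite presentation — which is exactly the content guaranteed by the rigidity in Remark~\ref{R:coherent type A} and is why the theorem is stated as an ``if and only if.''
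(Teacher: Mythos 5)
First, a point of comparison: the paper does not actually prove this statement — its entire ``proof'' is the citation \cite[Proposition~8.3.5]{part2} — so the question is whether your sketch would stand on its own. Your ``only if'' direction is fine. The ``if'' direction, however, has a genuine gap: you lean on Remark~\ref{R:coherent type A} and \cite[Lemma~4.5.4]{part2}, but those results concern pseudocoherent \emph{$(\varphi,\Gamma)$-modules}, i.e., sheaves equipped with an isomorphism $\varphi^* \calF \to \calF$. The theorem you are proving makes no Frobenius hypothesis on $\calF$ — it is about arbitrary sheaves of $\tilde{\bA}_X$-modules — and the dichotomy ``only $p$-power torsion; finitely generated and $p$-torsion-free implies projective'' fails without the Frobenius structure. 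Over $\tilde{\bA}(Y)$, which is a ring of Witt vectors over the perfect tilt, a finitely generated $p$-torsion-free module (e.g.\ a two-generator ideal) need not be projective; it is precisely the semilinear $\varphi$-isomorphism that rigidifies the module theory in Remark~\ref{R:coherent type A}, in analogy with finitely generated $\ZZ_p$-modules via Theorem~\ref{T:Artin-Schreier}. So your reduction of the torsion-free case to ``finite projective, hence descends'' does not get off the ground.

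Second, even setting that aside, the real content of the implication — that mere finite generation of the single module representing $\calF$ on $Y$ forces \emph{pseudocoherence}, i.e., finite presentation together with finitely generated syzygies at every stage of a resolution — is never engaged. Your d\'evissage step ``reduce mod $p$, where finitely generated modules are better behaved'' is exactly where the theorem is nontrivial: over a perfect uniform Banach ring of characteristic $p$, finitely generated does not imply finitely presented, and what makes the statement true in \cite[Proposition~8.3.5]{part2} is the extra structure coming from $\calF$ being a sheaf on $X_{\proet}$ (equivalently, the descent datum over $Y \times_X Y$, the residual action of the Galois group of the tower). Your proposal names this as ``where the real work happens'' but supplies no argument for it, so as written it restates the difficulty rather than resolving it. A self-contained proof would have to reproduce the analysis of finitely generated modules over the period rings of the tower carried out in \cite[\S 8.3]{part2}; otherwise one should simply cite Proposition~8.3.5 there, as the paper does.
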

\begin{proof}
See \cite[Proposition~8.3.5]{part2}. 
\end{proof}

\begin{theorem} \label{T:finitely generated to pseudocoherent}
Let $X$ be an affinoid space over $K$.
Let $\psi$ be a finite \'etale tower over $X$ whose total space $Y$ is perfectoid. 
Let $\calF$ be a sheaf of $\tilde{\bC}^{[s,r]}_X$-modules.
Then $\calF$ is pseudocoherent if and only if its restriction to $Y$ is represented by a finitely generated $\tilde{\bC}^{[s,r]}(Y)$-module.
\end{theorem}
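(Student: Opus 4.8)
The plan is to establish this as the type-$\tilde{\bC}^{[s,r]}$ counterpart of Theorem~\ref{T:finitely generated to pseudocoherent type A}, following the argument of \cite[Proposition~8.3.5]{part2} with $\tilde{\bA}$ systematically replaced by $\tilde{\bC}^{[s,r]}$. For the forward implication, pseudocoherence of $\calF$ is by definition a pro-\'etale-local condition; since $Y$ is a pro-\'etale cover of $X$ and pseudocoherent $\tilde{\bC}^{[s,r]}$-modules form a stack over perfectoid adic Banach rings for the pro-\'etale topology \cite[Theorem~4.3.3]{part2}, the restriction $\calF|_Y$ is represented by a pseudocoherent $\tilde{\bC}^{[s,r]}(Y)$-module, which is in particular finitely generated (the zeroth term of a projective resolution is a finite projective module surjecting onto it).

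The reverse implication is the substantive one. I would start from a finitely generated $\tilde{\bC}^{[s,r]}(Y)$-module $M$ representing $\calF|_Y$, together with the descent datum along $Y \to X$ inherited from $\calF$; by the stack property \cite[Theorem~4.3.3]{part2} it then suffices to promote ``finitely generated'' to ``pseudocoherent'' for $M$, since a pseudocoherent $M$ makes $\calF|_Y$ a pseudocoherent sheaf, hence $\calF$ a pseudocoherent sheaf by descent. This promotion is \emph{not} formal, because $\tilde{\bC}^{[s,r]}(Y)$ is not known to be coherent; the essential input must be that $Y$ arises as a \emph{finite \'etale tower over the affinoid} $X$. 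The plan is to build a resolution of $M$ by finite free $\tilde{\bC}^{[s,r]}(Y)$-modules by descending such a resolution one stage of the tower $\psi$ at a time from the affinoid base, where the relevant coordinate rings satisfy a strong noetherian property (so that successive syzygies remain finitely generated), then passing to the completed colimit over the tower and checking that exactness is preserved. To handle the module/sheaf bookkeeping and to verify the required exactness and pseudoflatness, I would apply $-\,\widehat{\otimes}_{\QQ_p} L$ with $L$ the $p$-cyclotomic extension, so that $\tilde{\bC}^{[s,r]}_Y$ becomes perfectoid \cite[Proposition~4.1.13]{part2} and the tower maps become compositions of perfectoid subdomains \cite[Lemma~4.2.2]{part2}; then the pseudoflatness of Remark~\ref{R:pseudoflat} (that is, \cite[Proposition~4.2.3]{part2}) applies, and one descends along a splitting $L \to \QQ_p$ exactly as in the proof of Lemma~\ref{L:base extension cohomology Robba}.

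The hard part will be this perfectoid-descent step: propagating the finite free resolution down the tower while controlling the ``almost mathematics'' error terms that perfectoid descent introduces, and ensuring that the resolution so obtained is compatible with the pro-\'etale topology, so that it sheafifies to an honest presentation of $\calF$ by finite projective $\tilde{\bC}^{[s,r]}_X$-modules. This is exactly the technical core of \cite[Proposition~8.3.5]{part2} in type $\tilde{\bA}$, and the expectation is that the perfectoid-descent machinery of \cite{part2} transfers to type $\tilde{\bC}^{[s,r]}$ with only notational changes.
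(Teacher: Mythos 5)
The paper does not actually prove this statement: its entire proof is the citation to \cite[Proposition~8.9.2]{part2}, so there is no internal argument to measure yours against, and your proposal ultimately defers to the same external machinery. Your overall shape is right. The forward direction is indeed formal: by the stack property \cite[Theorem~4.3.3]{part2}, a pseudocoherent sheaf is represented on the perfectoid cover $Y$ by a pseudocoherent, hence finitely generated, module. And you correctly locate all the content in the reverse direction, with the essential input being that $M$ carries the descent data of the tower $\psi$ and is not merely an abstract finitely generated module (an arbitrary finitely generated $\tilde{\bC}^{[s,r]}(Y)$-module has no reason to be pseudocoherent).

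Where your sketch drifts from the actual argument, and where the genuine gap lies, is in the mechanism for the hard step. You propose to propagate finite free resolutions up the tower stage by stage and pass to a completed colimit while ``controlling almost mathematics error terms.'' The engine of the cited proof is different: one uses the descent data (for a Galois, e.g.\ restricted toric, tower this is the semilinear $\Gamma$-action) to descend the finitely generated module $M$ itself from $\tilde{\bC}^{[s,r]}(Y)$ down to the imperfect period ring $\bC^{[s,r]}_{\psi}$ of the tower --- this is the decompleting-tower formalism whose type-$\bC$ incarnation is \cite[Theorem~5.9.4]{part2}, the analogue of \cite[Theorem~5.8.16]{part2} used in type $\bA$ --- where finite generation upgrades to pseudocoherence for free because those rings are coherent (indeed noetherian in the relevant cases, as exploited elsewhere in the paper), and one then base-extends back to $\tilde{\bC}^{[s,r]}(Y)$, which preserves pseudocoherence by pseudoflatness (Remark~\ref{R:pseudoflat}, applied after the cyclotomic twist exactly as in Lemma~\ref{L:base extension cohomology Robba}). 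Your closing assertion that ``the perfectoid-descent machinery transfers to type $\tilde{\bC}^{[s,r]}$ with only notational changes'' is precisely the point that cannot be waved through: the type-$\bC$ decompletion theorems require separate estimates from the type-$\bA$ ones (which is why \cite{part2} treats them in a distinct section), so a complete proof must either carry out that analysis or cite it, as the paper does.
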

\begin{proof}
See \cite[Proposition~8.9.2]{part2}. 
\end{proof}

\begin{theorem} \label{T:pseudocoherent noetherian}
Let $X$ be an affinoid space over $K$.
\begin{enumerate}
\item[(a)]
The category $\CPhi_X$ of pseudocoherent $(\varphi, \Gamma)$-modules over $\tilde{\bC}_X$ is an abelian subcategory of the category of sheaves of $\tilde{\bC}_X$-modules on $X_{\proet}$ (that is, the formation of kernels and cokernels commutes with the embedding).
\item[(b)]
If $X$ is quasicompact, then $\CPhi_X$ satisfies the ascending chain condition: given any sequence
$\calF_0 \to \calF_1 \to \cdots$ of epimorphisms in $\CPhi_X$, there exists $i_0 \geq 0$ such that for all $i \geq i_0$, the map $\calF_i \to \calF_{i+1}$ is an isomorphism.
\item[(c)]
For $i \geq 0$, the bifunctors $\Ext^i$ and $\Tor_i$ take $\CPhi_X \times \CPhi_X$ into $\CPhi_X$.
\item[(d)]
For any morphism $f: Y \to X$ of rigid spaces over $K$, for $i \geq 0$, the functor $L_i f_{\proet}^*$ takes $\CPhi_X$ into $\CPhi_Y$.
\end{enumerate}
\end{theorem}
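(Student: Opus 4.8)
The plan is to work locally on $X_{\proet}$, reducing each of (a)--(d) to a statement about finitely generated modules over the Robba-type Banach rings attached to perfectoid subdomains. By Remark~\ref{R:type C interval} we may replace $\tilde{\bC}_X$ by an interval version $\tilde{\bC}^{[s,r]}_X$ with $s \leq r/p$, and for a perfectoid subdomain $Y$ lying over an affinoid piece of $X$ we model a pseudocoherent $(\varphi,\Gamma)$-module, via Theorem~\ref{T:finitely generated to pseudocoherent}, as a finitely generated $\tilde{\bC}^{[s,r]}_X(Y)$-module carrying a Frobenius structure together with the pro-\'etale descent datum. The local statements are then reassembled using that pseudocoherent $\tilde{\bC}^{[s,r]}_X$-modules form a stack over perfectoid adic Banach rings for the pro-\'etale topology (\cite[Theorem~4.3.3]{part2}). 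The two ingredients on which everything rests are: (i) the foundational fact from \cite{part2} that over the rings $\tilde{\bC}^{[s,r]}_X(Y)$ the pseudocoherent modules behave like finitely presented modules over a coherent ring --- they form an abelian category closed under kernels, cokernels, extensions, $\Tor_i$, $\Ext^i$, and base change along arbitrary ring maps (hence also under passing to cohomology of complexes of pseudocoherent modules), with the inclusion into all modules exact; and (ii) that the Frobenius pullback $\varphi^*$ is exact, being base extension along the isomorphism $\varphi\colon \tilde{\bC}^{[s,r]}_X \to \tilde{\bC}^{[s/p,r/p]}_X$, so that it commutes with all the operations in (i) and transports the isomorphism $\varphi^*(-) \cong (-)$ along them.

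Granting (i) and (ii), parts (a), (c), (d) are formal. For (a): the kernel and cokernel sheaves of a morphism in $\CPhi_X$ are, over each perfectoid subdomain, the kernel and cokernel of a map of pseudocoherent $\tilde{\bC}^{[s,r]}$-modules, hence pseudocoherent by (i); (ii) equips them with a Frobenius structure; and the stack property shows these glue to objects of $\CPhi_X$ which at the same time compute the kernel and cokernel in the category of all $\tilde{\bC}_X$-modules, so the inclusion is exact. For (c): resolving one argument by finite projective modules (possible since it is pseudocoherent), $\Tor_i$ and $\Ext^i$ are computed as the (co)homology of a complex whose terms are tensor products, respectively $\Hom$'s, of a finite projective module with a pseudocoherent module --- hence themselves pseudocoherent --- so (i) makes the (co)homology pseudocoherent, (ii) supplies the Frobenius structure, and the stack property glues. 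For (d): over each perfectoid subdomain $L_i f^*_{\proet}\calF$ is $\Tor_i$ of a pseudocoherent module against the base-change ring $\tilde{\bC}^{[s,r]}_Y(V)$, again pseudocoherent by the same computation, with Frobenius structure inherited from compatibility of $\varphi$ with base change; glue.

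Part (b) is the substantive point, and I expect it to be the main obstacle. It uses (a): given epimorphisms $\calF_0 \twoheadrightarrow \calF_1 \twoheadrightarrow \cdots$ in $\CPhi_X$, the kernels $\calG_i := \ker(\calF_0 \to \calF_i)$ form, by (a), an increasing chain of pseudocoherent sub-$(\varphi,\Gamma)$-modules of $\calF_0$, and the claim is that it stabilizes. One first reduces to $X$ affinoid: a chain that stabilizes after restriction to each member of a finite affinoid cover stabilizes globally, since being an isomorphism of sheaves is local. The plan is then a d\'evissage. Introduce the generic rank of an object of $\CPhi_X$, its rank at the generic points of the relative Fargues--Fontaine curve $X_{\FFC}$ over $X$ (equivalently, at the minimal primes of $\tilde{\bC}^{[s,r]}_X(Y)$); this is a non-negative integer which is non-increasing along epimorphisms, hence eventually constant, and past the point where it stabilizes each $\calF_i \to \calF_{i+1}$ has torsion kernel. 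One is then reduced to showing that the full subcategory of torsion objects of $\CPhi_X$ (with $X$ affinoid) satisfies the ascending chain condition. Here the decisive input is geometric: $\varphi$-invariance forces the support of a torsion object --- a priori a proper closed subspace of $X_{\FFC}$ --- to be finite over $X$ (a $\varphi$-stable proper closed subset is ``zero-dimensional'' in the curve direction), so such an object is the pushforward of a coherent sheaf from a closed subspace finite over $X$; since $\calO_X$ is noetherian ($X$ being affinoid over $K$), the ascending chain condition follows. Equivalently one may invoke the structural finiteness results of \cite{part2} for torsion pseudocoherent $(\varphi,\Gamma)$-modules directly. I regard the crux as making ``generic rank'' and ``torsion'' rigorous in the pseudocoherent relative setting and carrying through the support argument; the remainder is formal bookkeeping on top of the coherence-type properties of the period rings and exactness of $\varphi^*$.
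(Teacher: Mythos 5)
The paper does not actually prove this theorem: the proof reads ``See [part2, Theorem~8.10.6, Theorem~8.10.7]'', so there is no in-text argument to compare yours against. Judged on its own terms, your proposal has a genuine gap at its foundation, namely your ingredient (i). You assert as a ``foundational fact'' that over the rings $\tilde{\bC}^{[s,r]}_X(Y)$ pseudocoherent modules behave like finitely presented modules over a coherent ring --- abelian, closed under kernels, cokernels, $\Tor_i$, $\Ext^i$, and arbitrary base change. This is not available and is essentially the statement to be proved. The rings $\tilde{\bC}^{[s,r]}$ of a perfectoid space are enormous non-noetherian Banach rings that are not known (or expected) to be coherent, and over a non-coherent ring the kernel of a map of pseudocoherent modules need not be pseudocoherent, nor is the cohomology of a complex of pseudocoherent modules pseudocoherent. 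What makes Theorem~\ref{T:pseudocoherent noetherian} true is precisely the structure you treat as a formal afterthought: the $\Gamma$-action, i.e., the condition of being a pro-\'etale sheaf. As Remark~\ref{R:coherent type C} indicates, the real argument shows that a pseudocoherent module \emph{with} $\Gamma$-action becomes finite projective after inverting a single element $t$, and the residual $\theta$-local (torsion) part is controlled by showing its Fitting ideals descend to the noetherian ring $\calO_X$, whence noetherian induction and resolution of singularities. None of this is general module theory over the period ring, so your reduction of (a), (c), (d) to (i) assumes the hard content of the theorem. (Exactness of $\varphi^*$, your (ii), is correct but is the easy half.)

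Your sketch of (b) is closer in spirit to what is actually needed --- generic projectivity plus a d\'evissage to torsion objects whose support is constrained by $\varphi$-stability --- but it too leans on (a) to form the kernels $\calG_i$, and the claim that a $\varphi$-stable torsion support must be finite over $X$ glosses over the $\theta$-local case, where the support is the infinite $\varphi$-orbit of the divisor $t_\theta=0$ and the requisite finiteness over $X$ is exactly what the Fitting-ideal descent is for. The architecture is reasonable, but the load-bearing inputs are the ones you have postulated rather than proved.
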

\begin{proof}
See \cite[Theorem~8.10.6, Theorem~8.10.7]{part2}.
\end{proof}

\begin{remark} \label{R:coherent type C}
In light of Theorem~\ref{T:pseudocoherent noetherian}, when $X$ is a rigid space over a field rather than a more general adic space, it is natural to refer to objects of $\CPhi_X$ also as \emph{coherent $(\varphi, \Gamma)$-modules} over $\tilde{\bC}_X$.
The proof of Theorem~\ref{T:pseudocoherent noetherian} yields some additional information about the module structure of these objects. This is more easily described in terms
of the corresponding modules over $\tilde{\bC}^{[s,r]}_X$: any such module admitting a $\Gamma$-action (i.e., which is a sheaf for the pro-\'etale topology)
becomes finite projective after inverting some element $t$ of $\tilde{\bC}^{[s,r]}_L$ for $L$ a perfectoid extension of $K$.

In case $1 \in [s,r]$, one candidate for such an element is an element $t_\theta$ generating the kernel of the canonical map
$\theta: \tilde{\bC}^{[s,r]}_L \to L$. In case $K$ is discretely valued, 
the element $t$ can always be taken to be a product of images of $t_\theta$ under various powers of $\varphi$; by contrast, when $K$ is perfectoid, we have many more options for $t$. However, most of the module-theoretic complexity arises from $t_\theta$, in the following sense: if $t$ is irreducible and coprime to $t_\theta$, then any $t$-torsion coherent $\Gamma$-module over $\tilde{\bC}^{[s,r]}_X$ is finite projective
over $\tilde{\bC}^{[s,r]}_X/(t)$ \cite[Corollary~8.8.10]{part2}.

By contrast, if $t = t_\theta$, then $t$-torsion coherent $\Gamma$-modules over $\tilde{\bC}^{[s,r]}_X$ need not be finite projective over $\tilde{\bC}^{[s,r]}_X/(t) \cong \widehat{\calO}_X$; for instance, the pullback of any coherent $\calO_X$-module occurs in this category. However, one can at least say that these modules have Fitting ideals which descend to $\calO_X$ (see \cite[\S 8.3]{part2}), so one can establish the desired module-theoretic properties using noetherian induction and resolution of singularities.
\end{remark}

\begin{defn} \label{D:co-theta-projective}
We say that a pseudocoherent $(\varphi, \Gamma)$-module $\calF$ over $\tilde{\bC}^{[s,r]}_X$ is \emph{$\theta$-local} (resp.\ \emph{co-$\theta$-projective})
if it is annihilated by (resp.\ becomes projective after inverting) some product
of images of $t_\theta$ under various powers of $\varphi$, and similarly for an object of $\CPhi_X$.

By Remark~\ref{R:coherent type C}, if $K$ is discretely valued, then every object of $\CPhi_X$ is co-$\theta$-projective; moreover, for general $K$, most module-theoretic difficulties are concentrated in the subcategory of $\theta$-local objects. For example,
by \cite[Corollary~8.8.10]{part2} (as applied in Remark~\ref{R:coherent type C}), in the quotient of $\CPhi_X$ by the Serre subcategory of  $\theta$-local objects, every object has projective dimension at most 1; in other words,
any torsion-free object of $\CPhi_X$ is co-$\theta$-projective.
By similar arguments, one sees that the essential images of $\Ext^i$ for $i>1$,  $\Tor_i$ for $i>1$, and $L_i f^*_{\proet}$ for $i>0$ all consist of $\theta$-local objects
\cite[Theorem~8.10.6, Theorem~8.10.7]{part2}.
\end{defn}

\begin{remark}
Although we will not use this result here, we note that the ascending chain condition also holds for the localization of $\CPhi_X$ at a point of $X$. See 
\cite[Theorem~8.10.9]{part2}.
\end{remark}

The following example shows that one cannot hope to extend
Theorem~\ref{T:pseudocoherent noetherian} to arbitrary higher direct images along a general morphism of rigid spaces.
\begin{example} \label{exa:bad closed unit disc}
Put $X = \Spa(K,K^\circ)$, $Y = \Spa(K\{T,T^{-1}\}, K\{T,T^{-1}\}^{\circ})$, and
$\calF = \tilde{\bC}_Y$. Then the higher direct images of $\calF$ along $Y_{\proet} \to X_{\proet}$ are not coherent $(\varphi, \Gamma)$-modules over $\tilde{\bC}_X$. This amounts to the fact that for $R$ a Banach algebra over $\Qp$ containing a topologically nilpotent unit $\pi$, if we equip $S = R\{T,T^{-1}\}$ with the action of $\Zp^\times$ sending $T$ to $(1+\pi)T$, then for any $\gamma \in \Zp^\times$ of infinite order, the action of $\gamma-1$ on $S$ is not strict. A closely related fact is that the de Rham cohomology of the closed unit disc over a $p$-adic field is not finite-dimensional: integration of a power series preserves the radius of convergence but not the behavior at the boundary.
\end{example}

A related example is the following.

\begin{example} \label{exa:bad open annulus}
Let $Y$ be an open annulus over $X$.
Then the failure of finiteness described in Example~\ref{exa:bad closed unit disc} does not arise, but there are additional subtleties. For instance, for each character of $\Zp^\times$, we may define a $\Gamma$-module over $\tilde{\bC}_Y$ which is free of rank 1 with the action of
$\Zp^\times$ on the generator being given by the chosen character. When this character is exponentiation by a $p$-adic Liouville number, we expect the $\Gamma$-cohomology not to be finite-dimensional, again by analogy with a corresponding pathology in the theory of $p$-adic differential equations (see \cite[Chapter~13]{kedlaya-course} for further discussion).

However, note that this example does not admit an action of $\varphi$. One may thus still hope to prove coherence of higher direct images of coherent $(\varphi, \Gamma)$-modules along $Y_{\proet} \to X_{\proet}$. This might be likened to the fact that finiteness results for the rigid cohomology of overconvergent isocrystals on varieties of characteristic $p$ are generally only known in the presence of Frobenius structures
(e.g., see \cite{kedlaya-rigid-finiteness}). While this makes such a result plausible, we will only prove such a theorem for smooth proper morphisms.
\end{example}

\section{Properness for rigid analytic varieties}
\label{subsec:properness}

We next recall Kiehl's definition of properness for rigid analytic varieties,
and its interaction with completely continuous morphisms.

\begin{defn} \label{D:inner morphism}
Let $f: (B,B^+) \to (B', B^{\prime +})$ be an affinoid localization of adic Banach algebras over an adic Banach ring $(A,A^+)$.
We say that $f$ is \emph{inner (relative to $(A,A^+)$)} if there exists a strict surjection $A\{S\} \to B$ for some (possibly infinite set) $S$ such that each element of $S$ maps to a topologically nilpotent element of $B'$. Since this condition does not depend on the plus subrings, we will also say that $B \to B'$ is inner relative to $A$.
\end{defn}

\begin{lemma} \label{L:transfer inner}
Let $(A,A^+) \to (A',A^{\prime +}) \to (A'', A^{\prime \prime +})$ be morphisms of perfectoid adic Bananch algebras, and apply the perfectoid correspondence 
\cite[Theorem~3.3.8]{part2} to obtain morphisms $(R, R^+) \to (R', R^{\prime +}) \to (R'', R^{\prime \prime +})$ of perfect uniform adic Banach algebras.
If $(R', R^{\prime +}) \to (R'', R^{\prime \prime +})$ is an inner rational localization relative to $(R, R^+)$,
then $(A', A^{\prime +}) \to (A'', A^{\prime \prime +})$ is an inner rational localization relative to $(A, A^+)$.
\end{lemma}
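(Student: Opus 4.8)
The plan is to transport, across the perfectoid correspondence, the explicit data that exhibits innerness on the tilted side. Two assertions must be verified for $(A',A^{\prime+})\to(A'',A^{\prime\prime+})$: that it is a rational localization, and that it is inner relative to $(A,A^+)$. The first is formal: the perfectoid correspondence \cite[Theorem~3.3.8]{part2} is an equivalence of categories that preserves and reflects rational localizations, so a rational localization $(R',R^{\prime+})\to(R'',R^{\prime\prime+})$ automatically untilts to one $(A',A^{\prime+})\to(A'',A^{\prime\prime+})$. For the second, by Definition~\ref{D:inner morphism} it suffices to produce a strict surjection $A\{\tilde S\}\to A'$ of adic Banach $A$-algebras carrying each coordinate into a topologically nilpotent element of $A''$.

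Begin with a strict surjection $\phi\colon R\{S\}\to R'$ exhibiting innerness on the tilted side, so that each $\phi(s)$ maps to a topologically nilpotent element of $R''$. First I would arrange $\lVert\phi\rVert\le1$: since $A$ is perfectoid, $R$ carries a topologically nilpotent unit $\varpi$, and replacing each $\phi(s)$ by $\varpi^{n}\phi(s)$ for a fixed sufficiently large $n$ alters neither the closed $A$-subalgebra of $R'$ topologically generated by the images nor the topological nilpotence of the images in $R''$ (because $\varpi$ is a unit), so we may assume the coordinates map to elements of norm $\le1$. Since $R'$ is perfect, $\phi$ then extends to a bounded map $\widehat\phi\colon R\{S^{1/p^\infty}\}\to R'$ sending the coordinate $s^{1/p^k}$ to the unique $p^{k}$-th root of $\phi(s)$ in $R'$; this $\widehat\phi$ is surjective, being an extension of $\phi$, hence a strict surjection by the open mapping theorem, and each coordinate still maps to a topologically nilpotent element of $R''$, being a $p^{k}$-th root there of the topologically nilpotent element $\phi(s)$.

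Now $R\{S^{1/p^\infty}\}$ is a perfect uniform Banach $R$-algebra whose untilt over $A$ is the perfectoid polydisc $A\{S^{1/p^\infty}\}$, in which the coordinate $s^{1/p^k}$ is the image of the tilted coordinate under the multiplicative section $(\cdot)^{\sharp}$ (the standard computation of the tilt of a perfectoid polydisc). Feeding $\widehat\phi$ to the perfectoid correspondence produces a strict surjection $\Phi\colon A\{S^{1/p^\infty}\}\to A'$ — here I invoke that the correspondence preserves strict surjections, equivalently that it matches up closed immersions of perfectoid spaces with their tilts — and $\Phi$ sends the coordinate $s^{1/p^k}$ to $\widehat\phi(s^{1/p^k})^{\sharp}$. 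By naturality of $(\cdot)^{\sharp}$, the image of this element in $A''$ is the $\sharp$ of the image of $\widehat\phi(s^{1/p^k})$ in $R''$; since the latter is topologically nilpotent, since $(\cdot)^{\sharp}$ is an isometry for the spectral seminorms, and since topological nilpotence is precisely the condition that the spectral seminorm be $<1$, we conclude that $\Phi(s^{1/p^k})$ is topologically nilpotent in $A''$. Finally, precompose $\Phi$ with the evident strict surjection $A\{\tilde S\}\to A\{S^{1/p^\infty}\}$, for $\tilde S=S\times\ZZ_{\ge0}$, which presents the perfectoid polydisc as the completed quotient of the Tate algebra by the relations $X_{s,k+1}^{p}=X_{s,k}$ (and is strict by the open mapping theorem) and sends the coordinate indexed by $(s,k)$ to $s^{1/p^k}$. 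The composite $A\{\tilde S\}\to A'$ is the strict surjection required by Definition~\ref{D:inner morphism}.

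The step I expect to be the real crux is the assertion that the perfectoid correspondence preserves strict surjections; everything else is bookkeeping with norms and with the open mapping theorem, which I would suppress, the reductions to $\lVert\phi\rVert\le1$ and to unit polydiscs being harmless since topological nilpotence does not depend on the chosen norm. This is also the reason the argument is routed through the perfection $R\{S^{1/p^\infty}\}$ rather than $R\{S\}$ itself: only the former is perfectoid, hence can be fed to the correspondence, and one cannot instead simply apply $(\cdot)^{\sharp}$ to $\phi$ since $(\cdot)^{\sharp}$ is multiplicative but not additive. (One can also avoid invoking preservation of surjections as a black box, at the cost of a longer argument: reduce modulo $\varpi$, where the correspondence furnishes an isomorphism of residue rings, check surjectivity there directly, and lift back using $\varpi$-adic completeness.)
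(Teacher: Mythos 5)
Your argument is correct, but it takes a more roundabout route than the paper's on the key step. The paper does not untilt the morphism $\phi$ at all: it simply defines a map $A\{S\} \to A'$ on the \emph{same} index set $S$ by sending $s \mapsto \theta([f(s)])$, i.e., it sharps only the topological generators and then invokes the universal property of the Tate algebra (a bounded $A$-algebra map out of $A\{S\}$ is freely determined by power-bounded images of the coordinates). Your worry that ``one cannot simply apply $(\cdot)^{\sharp}$ to $\phi$ since $\sharp$ is multiplicative but not additive'' therefore slightly misses the point — additivity is never needed, because the map is built from the generators, not transported as a morphism. What both proofs ultimately rest on is the same fact: that the resulting map onto $A'$ is a strict surjection, which one checks modulo a pseudouniformizer, where tilting induces an isomorphism of quotient rings compatible with Teichm\"uller lifts, and then lifts by completeness and the open mapping theorem — exactly the alternative you sketch in your closing parenthesis. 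Your detour through the completed perfection $R\{S^{1/p^\infty}\}$, its untilt $A\{S^{1/p^\infty}\}$, and the re-presentation of the latter as a quotient of $A\{\tilde S\}$ is sound (the normalization by $\varpi^n$ and the preservation of topological nilpotence under $p$-power roots and under $\sharp$ are all fine, since the spectral seminorm is preserved), and it has the virtue of isolating the black-box input (``tilting preserves strict surjections'') explicitly; but it costs you an enlarged index set and the auxiliary computation of the tilt of the perfectoid polydisc, none of which the direct construction requires. For the rational-localization part, you and the paper invoke the same preservation statement (the paper cites \cite[Theorem~3.3.18]{part2}).
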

\begin{proof}
By \cite[Theorem~3.3.18]{part2}, $(A', A^{\prime +}) \to (A'', A^{\prime \prime +})$ is also a rational localization.
Choose a strict surjection $f: R\{S\} \to R'$ such that each element of $S$ maps to a topologically nilpotent element of $R''$.
We then obtain a strict surjection $A\{S\} \to A'$ sending $s$ to $\theta([f(s)])$;
hence $(A', A^{\prime +}) \to (A'', A^{\prime \prime +})$ is inner.
\end{proof}

\begin{remark}
We do not know if the converse to Lemma~\ref{L:transfer inner} holds. An argument to prove this would likely encounter difficulties similar to those described in \cite[Remark~3.6.12]{part1}.
\end{remark}

\begin{hypothesis} \label{H:properness}
For the remainder of \S\ref{subsec:properness}, let $(A,A^+)$ be an adic affinoid algebra over $K$ (a nonarchimedean field of mixed characteristics).
\end{hypothesis}

\begin{remark} \label{R:inner morphism}
Let $f: (B, B^+) \to (B', B^{\prime +})$ be an affinoid localization of adic affinoid algebras over $(A,A^+)$.
In light of Hypothesis~\ref{H:properness},
the condition that $f$ is inner can be reformulated in the following ways.
\begin{enumerate}
\item[(a)]
The Banach ring $B$ is topologically generated over $A$ by power-bounded elements which become topologically nilpotent in $B'$.
\item[(b)]
For some $n \geq 0$, there exists a strict $A$-linear surjection $A\{T_1,\dots,T_n\} \to B$ which extends to a morphism $A\{T_1/r,\dots,T_n/r\} \to B'$ for some $r \in (0,1)$.
\item[(c)]
For the spectral seminorms on $A,B,B'$, the image of $\kappa_B$ in $\kappa_{B'}$ is a finite $\kappa_A$-algebra.
\end{enumerate}
\end{remark}

\begin{example}
For $r_i, s_i \in p^\QQ$ with $s_i < r_i$, the morphism
\[
A\{T_1/r_i,\dots,T_n/r_n\} \to A\{T_1/s_1, \dots, T_n/s_n\}
\]
is inner relative to $A$.
\end{example}

\begin{lemma} \label{L:inner completely continuous}
Let $f: (B,B^+) \to (B', B^{\prime +})$ be an inner affinoid localization of adic affinoid algebras over $(A, A^+)$.
Then the underlying morphism $B \to B'$ of Banach spaces over $A$ is completely continuous.
\end{lemma}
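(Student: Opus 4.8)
The plan is to reduce immediately to the Tate-algebra inclusions of Example~\ref{exa:Tate completely continuous}, using the reformulation of innerness provided by Remark~\ref{R:inner morphism}(b), and then to transport complete continuity across a bounded extension map and a strict surjection by means of Remark~\ref{R:composition}. First note that $A$ is a legitimate base ring for the discussion of \S\ref{subsec:finiteness}: since $K$ is a nonarchimedean field of mixed characteristic, $p$ is a topologically nilpotent unit in $K$, hence in $A$, so $A$ is an adic Banach ring in the required sense; moreover $f \colon B \to B'$, being a morphism of adic affinoid $A$-algebras, is an $A$-linear bounded map, i.e.\ a morphism in $\BMod_A$.

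By Remark~\ref{R:inner morphism}(b) (which is in force here thanks to Hypothesis~\ref{H:properness}), there are an integer $n \geq 0$, a real number $r \in (0,1)$, a strict $A$-linear surjection $\pi \colon A\{T_1,\dots,T_n\} \to B$, and a bounded $A$-algebra homomorphism $\psi \colon A\{T_1/r,\dots,T_n/r\} \to B'$ fitting into a square
\[
\xymatrix{
A\{T_1,\dots,T_n\} \ar^{\iota}[r] \ar^{\pi}[d] & A\{T_1/r,\dots,T_n/r\} \ar^{\psi}[d] \\
B \ar^{f}[r] & B'
}
\]
which commutes in $\BMod_A$, where $\iota$ is the natural inclusion (available because $r < 1$).

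Applying Example~\ref{exa:Tate completely continuous} with all $r_i = 1$ and all $r'_i = r < 1$, the inclusion $\iota$ is strictly completely continuous, in particular completely continuous. By Remark~\ref{R:composition}, the postcomposition $\psi \circ \iota$ of $\iota$ with the bounded morphism $\psi$ is again completely continuous. By commutativity of the square, $\psi \circ \iota = f \circ \pi$, so $f \circ \pi$ is completely continuous; since $\pi$ is a (strict) surjection, the converse clause of Remark~\ref{R:composition} shows that $f$ itself is completely continuous, which is the desired conclusion.

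The argument is essentially routine; the only points requiring attention are the two-directional use of Remark~\ref{R:composition} — once to push complete continuity forward along the bounded extension $\psi$, and once to cancel the surjection $\pi$ — and the passage from the bare definition of innerness (which a priori allows an infinite generating set) to the finite-variable presentation of Remark~\ref{R:inner morphism}(b), for which the affinoid hypothesis on $A$ over $K$ is exactly what is invoked.
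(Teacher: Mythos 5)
Your proof is correct and follows essentially the same route as the paper, which simply delegates to Kiehl's Satz~2.5: present $B$ as a quotient of a Tate algebra via Remark~\ref{R:inner morphism}(b), invoke complete continuity of the restriction map between polydiscs of different radii (Example~\ref{exa:Tate completely continuous}), and transport the property along the bounded map to $B'$ and back across the strict surjection using Remark~\ref{R:composition}.
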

\begin{proof}
See the proof of \cite[Satz~2.5]{kiehl-finiteness}. 
\end{proof}

\begin{lemma} \label{L:finite etale inner}
Let $f: (B,B^+) \to (B',B^{\prime +})$ be an inner affinoid localization of affinoid algebras over $(A,A^+)$.
Let $g: (C,C^+) \to (C',C^{\prime +})$ be the base extension of $f$ along an integral morphism $B \to C$ of affinoid algebras over $A$.
Then $g$ is also inner.
\end{lemma}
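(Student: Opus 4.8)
The plan is to verify innerness of $g$ by means of the reduction-theoretic criterion of Remark~\ref{R:inner morphism}(c), which is the most convenient of the equivalent formulations for base changes along finite morphisms: one must check that, with respect to the spectral seminorms, the image of $\kappa_C$ in $\kappa_{C'}$ is a finite $\kappa_A$-algebra.

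The first step is to reduce to the case where $B \to C$ is module-finite, using that an integral morphism of affinoid algebras over $K$ is automatically finite; then $C' = C \widehat{\otimes}_B B'$ is a finite $B'$-module. Fix power-bounded elements $c_1, \dots, c_m$ of $C$ generating it as a $B$-module, so that their images generate $C'$ as a $B'$-module. Comparing the spectral seminorm on $C$ (respectively $C'$) with the quotient seminorm transported along a surjection of $B$-modules (respectively $B'$-modules) from a finite free module, one identifies $\kappa_C$ as generated, as an algebra over the image of $\kappa_B$, by the reductions $\bar{c}_1, \dots, \bar{c}_m$, and likewise $\kappa_{C'}$ as generated over the image of $\kappa_{B'}$ by the images of the $\bar{c}_i$.

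Now one brings in integrality. After multiplying the $c_i$ by a suitable power of a topologically nilpotent unit of $K$, each $c_i$ satisfies a monic relation $c_i^{n} + b_{n-1} c_i^{n-1} + \cdots + b_0 = 0$ with coefficients $b_j \in B^{\circ}$ (with $n$ and the $b_j$ depending on $i$); reducing this relation inside $\kappa_{C'}$, where the images of the $b_j$ lie in the image of $\kappa_B$, shows that the image of each $\bar{c}_i$ in $\kappa_{C'}$ is integral over the image of $\kappa_B$ in $\kappa_{C'}$. Since $f$ is inner, Remark~\ref{R:inner morphism}(c) gives that the image of $\kappa_B$ in $\kappa_{B'}$ is a finite $\kappa_A$-algebra, hence so is its further image in $\kappa_{C'}$. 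Consequently the image of $\kappa_C$ in $\kappa_{C'}$, being generated over a finite $\kappa_A$-algebra by finitely many elements each integral over it, is again a finite $\kappa_A$-algebra; by Remark~\ref{R:inner morphism}(c), this means $g$ is inner relative to $(A, A^+)$.

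The only genuinely delicate point is the comparison invoked in the second step: criterion (c) refers to spectral seminorms, while module-finiteness most directly controls the quotient seminorms coming from a finite presentation. One must verify that replacing the spectral seminorm on $C$ by such a quotient seminorm --- the two being equivalent up to bounded factors --- does not alter the reduction $\kappa_C$, or equivalently that the power-bounded subring $C^{\circ}$ is, up to bounded denominators drawn from $K$, integral over $B^{\circ}$, so that the monic relations used in the third step can indeed be chosen with power-bounded coefficients. Granting this, the remainder is formal; in particular there is no need to pass through the perfectoid transfer of Lemma~\ref{L:transfer inner}, which would not apply here in any case since $B \to C$ need not be a localization.
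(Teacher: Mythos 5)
Your proof is correct and follows the same route as the paper, which simply declares the lemma ``immediate from Remark~\ref{R:inner morphism}(c)''; you have supplied the details of exactly that reduction-theoretic argument (integrality of $B \to C$ passes to the reductions, so the image of $\kappa_C$ in $\kappa_{C'}$ is finite over the image of $\kappa_B$ in $\kappa_{B'}$, which is finite over $\kappa_A$ by innerness of $f$).
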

\begin{proof}
This is immediate from Remark~\ref{R:inner morphism}(c).
\end{proof}

\begin{defn} \label{D:proper}
Let $f: Y \to X$ be a morphism of rigid analytic spaces over $K$.
We say that $f$ is \emph{separated} if the diagonal morphism $f: Y \to Y \times_X Y$ is a closed immersion. We say that $f$ is \emph{proper} if $f$ is separated and additionally, for every affinoid subspace $U$ of $X$, there exist two finite coverings $\{V_i\}_{i=1}^n$, $\{W_i\}_{i=1}^n$ of $f^{-1}(U)$ by affinoid subspaces with the property that for $i=1,\dots,n$, $W_i$ is an affinoid subdomain of $V_i$ which is inner relative to $U$. (That is, for $A, B', B''$ the rings of global sections of $U,V_i,W_i$, the morphism $B' \to B''$ is an affinoid localization which is inner relative to $A$.)
\end{defn}

\begin{remark}
The definition of properness given in Definition~\ref{D:proper} is the original definition of Kiehl \cite{kiehl-finiteness}. Alternate characterizations are described in \cite{temkin-local, temkin-local2}; such characterizations can be used to check for instance that properness is stable under composition, and that the analytification of a proper algebraic variety is proper.
\end{remark}

\begin{remark} \label{R:annuli cover discs}
It will be useful in what follows to recall that for any adic Banach algebra $(A,A^+)$, the disc $\Spa(A\{T\}, A^+\{T\})$ is covered by the annuli
$\Spa(A\{T^{\pm}\}, A^+\{T^{\pm}\})$ and $\Spa(A^+\{(T-1)^{\pm}\}, A^+\{(T-1)^{\pm}\})$.
\end{remark}

\begin{lemma} \label{L:proper covering}
Let $(A,A^+)$ be an adic affinoid algebra over $K$, and let
$f: X \to \Spa(A,A^+)$ be a smooth proper morphism of relative dimension $n$.
Then for each $x \in X$ and each affinoid subdomain $x \in V_0$ of $X$, there exist
a rational localization $(A,A^+) \to (B,B^+)$ with $f(x) \in \Spa(B,B^+)$, some affinoid subdomains $x \in V_2 = \Spa(C_2,C_2^+) \subseteq V_1 = \Spa(C_1,C_1^+) \subseteq V_0$ of $X$, and a commutative diagram of the form
\begin{equation} \label{eq:proper covering}
\xymatrix{
B\{\frac{s_{1,1}}{T_1},\dots,\frac{s_{1,n}}{T_n},\frac{T_1}{r_{1,1}},\dots,\frac{T_n}{r_{1,n}}\} \ar[d]
\ar[r] & B\{\frac{s_{2,1}}{T_1},\dots,\frac{s_{2,n}}{T_n},\frac{T_1}{r_{2,1}},\dots,\frac{T_n}{r_{2,n}}\} \ar[d] \\
C_1 \ar[r] & C_2 \\
}
\end{equation}
for some $0 < s_{1,i} < s_{2,i} \leq r_{2,i} < r_{1,i}$ in which the vertical arrows are finite \'etale morphisms of $A$-algebras.
In particular, $(C_1,C_1^+) \to (C_2,C_2^+)$ is inner relative to $(B,B^+)$.
\end{lemma}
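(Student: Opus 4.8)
The plan is to use the local structure of smooth and of étale morphisms to reduce the statement to an elementary manipulation of rational subdomains of a relative polydisc, and then to read off innerness from Lemma~\ref{L:finite etale inner}; the argument uses only smoothness of $f$ and is local near $x$. First I would produce étale coordinates. Since $f$ is smooth of relative dimension $n$, the module of relative differentials is locally free of rank $n$ near $x$, so I may pick an affinoid neighborhood $U \subseteq V_0$ of $x$ and functions $t_1,\dots,t_n \in \calO(U)$ whose differentials trivialize it; by the Jacobian criterion the resulting $A$-morphism $U \to \mathbb{A}^n_{\Spa(A,A^+)}$ is étale at $x$, hence on all of $U$ after shrinking, and since $U$ is affinoid it factors through a relative polydisc $\mathbb{B}^n$. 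Replacing $t_i$ by $t_i+1$ whenever $t_i(x)=0$ (harmless, as $d(t_i+1)=dt_i$) and enlarging the polyradii $\rho_i$, I may arrange that the image $z$ of $x$ has $\tau_i := |T_i(z)| \in (0,\rho_i)$ for every $i$. By the standard local structure of étale morphisms of affinoid adic spaces, after one more shrinking of $U$ the map $U \to \mathbb{B}^n$ factors as a finite étale morphism $U \to W$ followed by a rational localization $W \hookrightarrow \mathbb{B}^n$, still with $z \in W$.

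The crux is to fit a ``box'' into $W$. I would argue that the rational subdomains $\Spa(B,B^+) \times_{\Spa(A,A^+)} \{\,s_i \le |T_i| \le r_i\ (1\le i\le n)\,\}$, with $\Spa(B,B^+)$ ranging over rational localizations of $\Spa(A,A^+)$ containing $f(x)$ and $s_i < \tau_i < r_i$, form a neighborhood basis of $z$ in $\mathbb{B}^n$: writing $W$ as the locus $|g_j| \le |h|$ for finitely many $g_j,h \in A\{T_1/\rho_1,\dots,T_n/\rho_n\}$ generating the unit ideal, one expands these as power series in the $T_i$ over $A$, restricts the $T_i$ to a polyannulus about $z$ on which all but finitely many monomials are negligible, and imposes the finitely many surviving inequalities among the leading coefficients as a rational condition on the base. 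Granting this, since $W$ is open I may choose a box $W_1 = \Spa(B\{s_{1,i}/T_i, T_i/r_{1,i}\},\dots) \subseteq W$ with $s_{1,i} < \tau_i < r_{1,i}$, and then a slightly smaller box $W_2 = \Spa(B\{s_{2,i}/T_i, T_i/r_{2,i}\},\dots)$ with $s_{1,i} < s_{2,i} < \tau_i < r_{2,i} < r_{1,i}$, so that $z \in W_2 \subseteq W_1 \subseteq W$. Setting $V_j := U \times_W W_j$ and $C_j := \calO(V_j)$, each $V_j \to W_j$ is finite étale (a base change of $U \to W$) and each $V_j \hookrightarrow U$ a rational localization, so $x \in V_2 \subseteq V_1 \subseteq U \subseteq V_0$, and the pullback square built from $U \to W$ and $W_2 \hookrightarrow W_1$ is exactly \eqref{eq:proper covering}, with top row $\calO(W_1) \to \calO(W_2)$ and vertical arrows finite étale. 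Finally, $\calO(W_1) \to \calO(W_2)$ is inner relative to $(B,B^+)$ directly from Definition~\ref{D:inner morphism}: it is a strict quotient of the Tate algebra over $B$ in $2n$ variables carrying those variables to the elements $s_{1,i}/T_i$ and $T_i/r_{1,i}$, each of which has supremum norm $s_{1,i}/s_{2,i} < 1$, respectively $r_{2,i}/r_{1,i} < 1$, on $W_2$ and is therefore topologically nilpotent there. Since $C_1 \to C_2$ is the base extension of $\calO(W_1) \to \calO(W_2)$ along the finite, hence integral, morphism $\calO(W_1) \to C_1$, Lemma~\ref{L:finite etale inner} yields that $(C_1,C_1^+) \to (C_2,C_2^+)$ is inner relative to $(B,B^+)$.

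I expect the only genuine difficulty to be the neighborhood-basis claim in the second paragraph — that products of rational localizations of the base with coordinate polyannuli are cofinal among the neighborhoods of $z$ in the relative polydisc; the appeals to the structure theory of smooth and étale morphisms, and the verification of innerness via Lemma~\ref{L:finite etale inner}, are routine.
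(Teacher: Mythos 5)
Your proposal has a genuine gap, and it is located exactly where you suspected: the neighborhood-basis claim in the second paragraph is false, and no repair is possible without using properness, which your argument explicitly discards (``the argument uses only smoothness of $f$''). The conclusion of the lemma forces $x$ to lie in the relative interior of $V_1$ over $\Spa(A,A^+)$: innerness of $(C_1,C_1^+) \to (C_2,C_2^+)$ relative to $(B,B^+)$ means precisely $V_2 \subseteq \mathrm{Int}(V_1/\Spa(B,B^+))$. By Huber's transitivity of interiors, if $V_1$ is an affinoid subdomain of $X$ then $\mathrm{Int}(V_1/\Spa(A,A^+)) \subseteq \mathrm{Int}(X/\Spa(A,A^+))$. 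So for $X = \Spa(K\{T\})$ the closed unit disc over $\Spa(K,K^\circ)$ (smooth but not proper) and $x$ the Gauss point, which does not lie in the open unit disc $\mathrm{Int}(X/K)$, the conclusion is simply false. Concretely, in your setup this manifests as follows: $W$ is (an open subset of) the image of an affinoid neighborhood $U$ of $x$, so the image point $z$ can sit on the relative boundary of $W$ over $\Spa(A,A^+)$ --- e.g.\ $\tau_i = |T_i(z)|$ can equal $\sup_U |t_i|$ --- and then \emph{no} box $\{s_i \le |T_i| \le r_i\}$ with $r_i > \tau_i$ is contained in $W$, so such boxes do not form a neighborhood basis of $z$. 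Your power-series expansion argument implicitly assumes you can take a polyannulus around $z$ that is strictly larger than $\{|T_i| = \tau_i\}$ while staying inside $W$, which is exactly what fails at boundary points.

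The paper's proof uses properness at the outset: Kiehl's definition produces affinoid subdomains $x \in V_2' \subseteq V_1'$ with $V_2'$ inner in $V_1'$ relative to $A$, which is what guarantees that $x$ has ``room'' beyond any small neighborhood. Even then, a second nontrivial step is needed that your sketch also elides: the \'etale coordinates are only constructed on a small neighborhood $V_2''$ of $x$, and one must \emph{extend} the finite \'etale chart outward to a strictly larger domain (outer radii $r_{1,i} > 1$). This is done by approximating the coordinate functions $f_i$ by functions that overconverge into $V_1'$ (using the inner inclusion) and checking that close approximations still define a finite \'etale morphism. If you want to salvage your argument, you must (i) invoke properness to place $x$ in the relative interior of a suitable $V_1'$, and (ii) replace ``fit a box into $W$'' by an extension/approximation argument for the \'etale chart, since the chart you construct a priori lives only on a neighborhood whose image need not contain any box around $z$ of outer radius exceeding $\tau_i$.
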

\begin{proof}
Since $X$ is proper, we can find affinoid subdomains $x \in V_2' \subseteq V_1'$ of $X$ such that $V_2'$ is inner in $V_1'$ relative to $A$.
Since $X$ is also smooth, we can find an affinoid subdomain $x \in V_2'' \subseteq V_2' \cap V_0$ such that $V_2 = \Spa(C''_2,C_2^{\prime \prime +})$
and $C_2''$ is finite \'etale over $B\{T_1,\dots,T_n\}$ for some rational localization $(A,A^+) \to (B,B^+)$. As in Remark~\ref{R:annuli cover discs}, we may ensure that $T_1,\dots,T_n$ do not vanish at $x$.

Let $f_1,\dots,f_n \in C_2''$ be the pullbacks of $T_1,\dots,T_n$.
Then any sufficiently close approximations of $f_1,\dots,f_n$ also define a finite \'etale morphism $B\{T_1,\dots,T_n\} \to C_2''$. In particular, these approximations may be chosen so that this morphism extends to a commutative diagram
\[
\xymatrix{
B\{\frac{T_1}{r_{1,1}},\dots,\frac{T_n}{r_{1,n}}\} \ar[d]
\ar[r] & B\{T_1,\dots,T_n\} \ar[d] \\
C_1'' \ar[r] & C_2'' 
}
\]
for some $r_{1,1},\dots,r_{1,n} > 1$ in such a way that 
$\Spa(C_2'', C_2^{\prime \prime +}) \subseteq \Spa(C_1'', C_1^{\prime \prime +})$
is an inner rational subdomain inclusion of affinoid subdomains of $X$
and $B\{T_1/r_{1,1},\dots,T_n/r_{1,n}\} \to C_1''$ is finite \'etale. We thus obtain a diagram of the desired form by taking $r_{2, i} = 1$ and choosing $s_{1,i}, s_{2,i}$ with $\left| T_i(x)\right| ^{-1} \leq s_{1,i} < s_{2,i} $.
\end{proof}

\section{Finiteness of relative \texorpdfstring{$\Gamma$}{Gamma}-cohomology}
\label{subsec:pro-etale finiteness}

We now begin our treatment of higher direct images by applying the method of Cartan-Serre to get a finiteness result. Due to the unavoidably local nature of the construction of toric towers, some care is required with the homological algebra setup
(which we already took in \S\ref{S:totalization}). Note that this approach is not enough by itself to establish coherence of higher direct images, as it does not immediately establish compatibility with base change;
see \S\ref{sec:base change}. 

\begin{prop} \label{P:direct image finiteness}
Suppose that $K$ contains all $p$-power roots of unity.
Let $f: Y \to X$ be a smooth proper morphism of relative dimension $n$ of rigid analytic spaces over $K$.
Let $\calF$ be a coherent $(\varphi, \Gamma)$-module over $\tilde{\bA}_Y$ (resp.\ $\tilde{\bC}_Y$).
Then there exists an open covering $\{U_j\}_j$ of $X$ such that for any index $j$
and any perfectoid subdomain $\tilde{X}$ of $U_{j,\proet}$, the groups
$H^i((Y \times_X \tilde{X})_{\proet}, \calF|_{Y \times_X \tilde{X}})$ are pseudocoherent modules over
$\tilde{\bA}_{\tilde{X}}$ (resp.\ $\tilde{\bC}_{\tilde{X}}$).
Moreover, these groups vanish for $i > 2n$, and their formation is compatible with passage from $\tilde{X}$ to a pro-finite \'etale cover.
\end{prop}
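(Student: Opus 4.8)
The plan is to reduce the global statement to a local computation via the toric-tower description of smooth proper morphisms (Lemma~\ref{L:proper covering}), and then to feed the resulting comparison of complexes into the Cartan-Serre machinery in the homotopical form of Lemma~\ref{L:Cartan-Serre homotopy}. First I would cover $X$ by affinoid subspaces $U_j$ over which $f$ admits, after passing to a rational localization $B$ of the base, the structure furnished by Lemma~\ref{L:proper covering}: a finite \'etale tower over a toric (polyannulus) base, so that $Y \times_X U_j$ is covered by finitely many such charts $V_{1} \supseteq V_{2}$ with $V_{2}$ inner in $V_{1}$ relative to the base. The \v{C}ech complex for such a covering, with terms given by the $(\varphi,\Gamma)$-complexes $0 \to \calF \xrightarrow{\varphi-1} \calF \to 0$ evaluated on the intersections (and, in type $\tilde{\bC}$, realized at a fixed interval $[s,r]$ as in Remark~\ref{R:type C interval}), computes the higher direct image, because \'etale (resp.\ pro-\'etale) cohomology of pseudocoherent modules and of $(\varphi,\Gamma)$-modules can be computed by such coverings (Theorem~\ref{T:Artin-Schreier}, Theorem~\ref{T:Artin-Schreier rational}, and the stack properties of pseudocoherent modules and $(\varphi,\Gamma)$-modules recalled in \S4).

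The heart of the argument is then to shrink each chart: replacing each $V_{k}$ by the inner subdomain $V_{k}'$ coming from slightly smaller radii $s_{k,i}, r_{k,i}$, one gets a second \v{C}ech complex and a restriction morphism from the large one to the small one. By Example~\ref{exa:Tate completely continuous} and Lemma~\ref{L:inner completely continuous}, together with Remark~\ref{R:fingen cc} and Lemma~\ref{L:finite etale inner} (which transports innerness through the finite \'etale tower), each component of this restriction morphism is a completely continuous morphism in $\BMod_A$ after inverting $p$, where $A = \tilde{\bA}_{\tilde X}$ or $\tilde{\bC}_{\tilde X}^{[s,r]}$. I would organize these complexes as a sequence $C_{i,0} \to \cdots \to C_{i,n}$ of cochain complexes indexed by the \v{C}ech degree (so that the totalization reconstitutes the \v{C}ech double complex, cf.\ Remark~\ref{R:double-totalization}), exactly the shape demanded by Lemma~\ref{L:Cartan-Serre homotopy}; the auxiliary objects $D_{i,j}$ are the complexes attached to the shrunken charts, with $\alpha,\beta$ the mutually quasi-inverse restriction and (homotopy) extension maps, and $g_j$ the completely continuous restriction morphisms. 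The base-change comparison that makes $\Tot(f_\bullet)$ a quasi-isomorphism is Lemma~\ref{L:base extension cohomology Robba} (and its $\tilde{\bA}$-analogue via Remark~\ref{R:pseudoflat}), applied to the passage from one set of radii to another on the same underlying adic space, so that the restriction is the identity on underlying modules. Crucially, because the restriction maps here are genuine \emph{base extensions} of rings rather than abstract homotopy equivalences, the commuting-square-with-compatible-homotopies hypotheses (a) and (b) of Lemma~\ref{L:Cartan-Serre homotopy} hold with the homotopies taken to be zero, which is exactly the scenario flagged in Remark~\ref{R:mapping cone func} and Remark~\ref{R:Cartan-Serre use case2}. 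Lemma~\ref{L:Cartan-Serre homotopy} then yields that each $H^i\bigl((Y\times_X\tilde X)_{\proet},\calF\bigr)$ is contained in a finite $A$-module.

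To upgrade ``contained in a finite $A$-module'' to ``pseudocoherent'', I would invoke the extra structure these cohomology groups carry: the $\Gamma$-action (from the pro-\'etale site) and, on passing back and forth over the cyclotomic extension, the finite-generation and pseudocoherence criteria of \cite{part2} recalled in Theorem~\ref{T:finitely generated to pseudocoherent type A} and Theorem~\ref{T:finitely generated to pseudocoherent} — namely, pseudocoherence over $\tilde{\bA}_{\tilde X}$ (resp.\ $\tilde{\bC}^{[s,r]}_{\tilde X}$) can be tested by finite generation after pulling back to a perfectoid pro-finite-\'etale cover, where the submodule-of-a-finite-module conclusion combined with completeness (Remark~\ref{R:noetherian}, Definition~\ref{D:pseudocoherent}) does promote to finite generation, hence pseudocoherence. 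The vanishing for $i>2n$ is immediate: the \v{C}ech complex for the toric covering has length $\le n$ in the geometric directions, and the $\varphi-1$ complex contributes one more degree of length $1$ at the two ends, while the $\Gamma$-direction has been absorbed into the pro-\'etale site as in the classical case (cf.\ Theorem~\ref{T:ordinary phi-Gamma finiteness}) — more precisely, one uses that $\tilde{\bC}$-cohomology in each chart is concentrated in degrees $0,1$ (Remark~\ref{R:type C interval}), so the total complex lives in degrees $0,\dots,2n$. Compatibility with passage to a pro-finite-\'etale cover of $\tilde X$ is built into the construction, since every object and morphism in sight has been defined by evaluating sheaves on the pro-\'etale site and the covering data descend. \textbf{The main obstacle} I anticipate is bookkeeping: arranging the local charts so that the \v{C}ech complexes genuinely assemble into the rigid $C_{i,\bullet}$-in-$C^+(\BMod_A)$ shape of Lemma~\ref{L:Cartan-Serre homotopy} with \emph{strictly} commuting squares and vanishing compatibility homotopies — that is, ensuring the shrink-the-radius maps are literally base extensions and not merely quasi-isomorphisms — and simultaneously keeping the finite \'etale tower innerness under control through Lemma~\ref{L:finite etale inner}, so that complete continuity survives the descent from the perfectoid cover back down to $\tilde X$.
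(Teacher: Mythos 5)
There is a genuine gap, and it sits at the heart of the argument: you assert that the restriction morphisms between the \v{C}ech complexes of the two coverings are completely continuous as morphisms of Banach modules over $\tilde{\bA}_{\tilde X}/(p)$ (resp.\ $\tilde{\bC}^{[s,r]}_{\tilde X}$), citing Example~\ref{exa:Tate completely continuous}, Lemma~\ref{L:inner completely continuous} and Remark~\ref{R:fingen cc}. But those \v{C}ech terms are modules over the \emph{perfectoid} period rings of the toric towers, and the map between these rings for an inner inclusion of charts is \emph{not} completely continuous: after adjoining all $p$-power roots of the coordinates and completing, the monomials $T^e$ with $e \in \ZZ[1/p]^n$ accumulate near any bounded exponent, so the operator norms to the quotients by finite submodules do not tend to $0$. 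The paper's proof exists in its elaborate form precisely to circumvent this: one must descend, via the decompletion theorems for restricted toric towers (\cite[Theorem~5.8.16, Theorem~5.9.4]{part2}), to pseudocoherent modules over the \emph{imperfect} period rings $\bA_{\psi}$, $\bC^{[s,r]}_{\psi}$, replace the perfectoid \v{C}ech complexes by the continuous $\Gamma$-cochain complexes $C^\bullet_{\cont}(\Gamma, M)$ over those rings (where the integral-exponent monomials do give the decay of Example~\ref{exa:Tate completely continuous}), and then reconcile the two models via the explicit splittings $\sigma, \sigma'$. In Lemma~\ref{L:Cartan-Serre homotopy} the pair $C_{i,j}$ versus $D_{i,j}$ is the perfectoid model versus the decompleted model, with $\alpha,\beta$ the inclusion and the monomial-projection splitting; it is not, as you have it, the large chart versus the shrunken chart (restriction from $V_1$ to $V_2$ is not a chart-by-chart homotopy equivalence, only a quasi-isomorphism of total complexes). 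Without the decompletion step, hypothesis (b) of Lemma~\ref{L:Cartan-Serre homotopy} cannot be verified and the method does not get off the ground.

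Two further points would also need repair. First, your promotion of ``contained in a finite $A$-module'' to pseudocoherence is too quick: over the non-noetherian rings $\tilde{\bA}_{\tilde X}$ and $\tilde{\bC}^{[s,r]}_{\tilde X}$ a submodule of a finite module need not be finitely generated, and the paper has to invoke \cite[Corollary~8.2.15]{part2} in type $\bA$ (after first reducing to $\calF$ killed by $p$ via Lemma~\ref{L:cohomology mod p}, since $\tilde{\bA}_{\tilde X}/(p^n)$ is a Banach ring only for $n=1$) and, in type $\bC$, a genuinely delicate three-step argument combining noetherian induction on the support, an analysis of $g$-power torsion in the closure of the coboundaries, and a base extension to $A\{T\}$. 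Second, the quasi-isomorphism of the total complexes $\Tot(f_\bullet)$ is not a consequence of Lemma~\ref{L:base extension cohomology Robba}; it holds because both coverings compute the same pro-\'etale cohomology of $\calF$ on $Y \times_X \tilde X$ via \cite[Theorem~5.7.11]{part2}.
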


Since the proof of Proposition~\ref{P:direct image finiteness} is rather involved, we break it up into a series of steps. We begin with some initial reductions.
(We distinguish the two phrasings in this statement, and all subsequent ones exhibiting a similar duality, as the \emph{type $\bA$ case} and the \emph{type $\bC$ case}.)

\begin{remark} \label{R:finiteness reduction}
In the type $\bC$ case of Proposition~\ref{P:direct image finiteness}, we may invoke 
Remark~\ref{R:type C interval} to replace $\calF$ with a pseudocoherent module
over $\tilde{\bC}^{[s,r]}_X$.
\end{remark}

We continue with a geometric reduction setting up a suitable framework for the Cartan-Serre method.
\begin{remark} \label{R:finiteness work locally}
By design, we are free to to work locally on $X$; 
that is, it suffices to show that for any fixed $x \in X$, we can replace $X$ with some neighborhood of $x$ for which the claim holds for the trivial open covering. This releases
the notation $U_i$ for another usage; see Definition~\ref{D:finiteness proper covering}.
\end{remark}

\begin{defn} \label{D:finiteness proper covering}
Using Lemma~\ref{L:proper covering} and Remark~\ref{R:finiteness work locally},
we may reduce to the case where $X = \Spa(A,A^+)$ is affinoid; $Y$ admits two coverings
$\{V_{1,i}\}_{i=1}^m, \{V_{2,i}\}_{i=1}^m$ with $V_{2,i} = \Spa(C_{2,i}, C_{2,i}^+) 
\subseteq V_{1,i} = \Spa(C_{1,i}, C_{1,i}^+)$; and for each $i$ there exists a commutative diagram
\begin{equation} \label{eq:proper covering2}
\xymatrix{
A\{\frac{s_{1,i,1}}{T_1},\dots,\frac{s_{1,i,n}}{T_n},\frac{T_1}{r_{1,i,1}},\dots,\frac{T_n}{r_{1,i,n}}\} \ar[d]
\ar[r] & A\{\frac{s_{2,i,1}}{T_1},\dots,\frac{s_{2,i,n}}{T_n},\frac{T_1}{r_{2,i,1}},\dots,\frac{T_n}{r_{2,i,n}}\} \ar[d] \\
C_{1,i} \ar[r] & C_{2,i}
}
\end{equation}
for some $0 < s_{1,i,j} < s_{2,i,j} \leq r_{2,i,j} < r_{1,i,j}$ in which the vertical arrows are finite \'etale morphisms of $A$-algebras.
We may further assume that for any nonempty subset $I \subseteq \{1,\dots,m\}$,
the spaces $V_{1,I} = \bigcap_{i \in I} V_{1,i}$, $V_{2,I} = \bigcap_{i \in I} V_{2,i}$
have the property that for each $i \in I$, $V_{1,I}$ is a rational subspace of $V_{1,i}$
and $V_{2,I}$ is a rational subspace of $V_{2,i}$.
\end{defn}

\begin{remark}
In the setting of Definition~\ref{D:finiteness proper covering}, 
we will prove that the claim holds on all of $X$.
To this end, we may assume without loss of generality that $\tilde{X} = \Spa(\tilde{A}, \tilde{A}^+)$
is the total space of a finite \'etale perfectoid tower $\psi_0$ over $X$
(otherwise, there is a nonfinite \'etale morphism at the bottom of the tower,
but we can pull back the data of Definition~\ref{D:finiteness proper covering}
along that \'etale morphism).
Note that the claimed compatibility with passage from $\tilde{X}$ to a pro-finite \'etale cover $\tilde{X}'$ follows from the fact that $\tilde{\bA}_{\tilde{X}'}/(p^n)$ (resp.\ $\tilde{\bC}^{[s,r]}_{\tilde{X}'}$)
is a topological direct sum of finite projective $\tilde{\bA}_{\tilde{X}}/(p^n)$-modules (resp.\ $\tilde{\bC}^{[s,r]}_{\tilde{X}}$-modules)
\cite[Remark~4.2.4]{part2}.
\end{remark}

We now build a particularly useful set of finite \'etale towers over subspaces of $Y$.
\begin{defn}
For $* = 1,2$, put
\[
(\tilde{C}_{1,i}, \tilde{C}_{1,i}^+) = (C_{1,i}, C_{1,i}^+) \widehat{\otimes}_{(A,A^+)} (\tilde{A}, \tilde{A}^+).
\]
Let $\psi_{*,i}$ denote the tower of finite \'etale coverings
$\{\Spa(\tilde{C}_{*,i,k},\tilde{C}_{*,i,k}^+) \to \Spa(\tilde{C}_{*,i}, \tilde{C}_{*,i}^+)\}_{k=0}^\infty$
in which
\[
\tilde{C}_{*,i,k} = \tilde{C}_{*,i}[T_1^{1/p^k}, \dots, T_n^{1/p^k}].
\]
Note that $\psi_{*,i}$ is a \emph{restricted toric tower} in the sense of
\cite[Definition~7.1.4]{part2}; according to \cite[Theorem~7.1.9]{part2},
all of the results of \cite[\S 5]{part2} apply to such towers.t
\end{defn}

We next describe the desired cohomology groups explicitly in terms of the specified towers.
\begin{defn} \label{D:compute complex}
For $* = 1,2$ and $I \subseteq \{1,\dots,m\}$ nonempty, let $\psi'_{*, I}$ 
be the tower
whose $k$-th term is the fiber product over $Y$ of the 
spaces $\Spa(\tilde{C}_{*,i,k},\tilde{C}_{*,i,k}^+)$ for all $i \in I$.
Let $\tilde{M}^{\prime,\bullet}_{*,I}$ be the \v{C}ech complex
for the sheaf $\calF$ and the cover $\psi'_{*,I}$; that is,
$\tilde{M}^{\prime,i}_{*,I}$ is the set of sections of $\calF$ on the $(i+1)$-fold
fiber product of the total space of $\psi'_{*,I}$.

For $s = 1,\dots,m$, we may interpolate the complexes $\tilde{M}^{\prime,\bullet}_{*,I}$ 
for $I \subseteq \{1,\dots,s\}$ into an $(s+1)$-dimensional complex
$\tilde{M}^{\prime,\bullet}_{*,\bullet,s}$,
taking $\tilde{M}^{\prime,\bullet}_{*,I}$ to be the zero complex for $I = \emptyset$.
We then have a natural morphism $\tilde{M}^{\prime,\bullet}_{1,\bullet,s} \to \tilde{M}^{\prime,\bullet}_{2,\bullet,s}$.
By \cite[Theorem~5.7.11]{part2}, the total complex $\tilde{M}^{\prime,\bullet}_{1,\bullet,s}$ 
(resp.\ $\tilde{M}^{\prime,\bullet}_{2,\bullet,s}$)
computes the cohomology of $\calF$ on the inverse image 
of $U_1 \cup \cdots \cup U_s$ (resp.\ $V_1 \cup \cdots \cup V_s$) in $\tilde{X}$.
In particular, for $s=m$, 
both total complexes compute the cohomology groups $H^i(\tilde{X}_{\proet}, \calF)$,
so the morphism between them is a quasi-isomorphism of total complexes.
\end{defn}

We next produce two alternate models for the single complex $\tilde{M}^{\prime,\bullet}_{*,I}$.

\begin{defn}
For $I \subseteq \{1,\dots,m\}$ nonempty, let $i = i(I)$ be the largest element of $M$.
Let $\psi_{1,I}$ (resp.\ $\psi_{2,I}$) be the tower obtained from $\psi_{1,i}$ (resp.\ $\psi_{2,I}$) by pullback from $U_i$ (resp.\ $V_i$) to $U_I$ (resp.\ $V_I$).
Let $\tilde{M}^{\bullet}_{*,I}$ be the \v{C}ech complex
for the sheaf $\calF$ and the cover $\psi_{*,I}$.
There is a natural morphism $\tilde{M}^{\bullet}_{*,I} \to \tilde{M}^{\prime, \bullet}_{*,I}$ of complexes which is a quasi-isomorphism \cite[Theorem~4.6.9]{part2}.

Let $\bA_{\psi_{*,I}}$, $\bC^{[s,r]}_{\psi_{*,I}}$ be the \emph{imperfect period rings}
associated to the tower $\psi_{*,I}$ as per \cite[Definition~5.2.1]{part2}.
Note that the morphism
$\bA_{\psi_{1,I}}/(p) \to \bA_{\psi_{2,I}}/(p)$
(resp. $\bC^{[s,r]}_{\psi_{1,I}} \to \bC^{[s,r]}_{\psi_{2,I}}$)
of Banach algebras over $\tilde{\bA}_{\tilde{X}}/(p)$ (resp.\
$\tilde{\bC}^{[s,r]}_{\tilde{X}}$) are completely continuous as 
morphisms of Banach modules.

In case (a) (resp.\ case (b)) of Proposition~\ref{P:direct image finiteness},
we may apply \cite[Theorem~5.8.16]{part2} (resp.\ \cite[Theorem~5.9.4]{part2}) to realize
$\calF|_{U_I}, \calF|_{V_I}$ as pseudocoherent $(\varphi, \Gamma)$-modules
over $\bA_{\psi_{1,I}}, \bA_{\psi_{2,I}}$
(resp.\ over $\bC^{[s,r]}_{\psi_{1,I}}, \bC^{[s,r]}_{\psi_{2,I}}$).
Let $M^\bullet_{*,I}$ be the resulting complex of continuous $\Gamma$-cochains for
$\Gamma \cong \ZZ_p^n$ the automorphism group of the tower $\psi_{*,I}$
(which is Galois because we forced $\tilde{A}$ to contain all $p$-power roots of unity).
There is a natural morphism $M^\bullet_{*,I} \to \tilde{M}^\bullet_{*,I}$ of complexes which is a quasi-isomorphism \cite[Theorem~5.8.15]{part2}, and the diagram
\begin{equation} \label{eq:cosplitting compatible}
\xymatrix{
M^\bullet_{1,I} \ar[r] \ar[d] & \tilde{M}^\bullet_{1,I} \ar[d] \ar[r] & \tilde{M}^{\prime, \bullet}_{1,I} \ar[d] \\
M^\bullet_{2,I} \ar[r] & \tilde{M}^\bullet_{2,I} \ar[r] & \tilde{M}^{\prime, \bullet}_{2,I}
}
\end{equation}
commutes.
\end{defn}

We next define splittings associated to the quasi-isomorphisms we have just introduced.
\begin{defn}
As in Definition~\ref{D:finiteness proper covering},
we may use \cite[Remark 4.2.4]{part2} to construct a morphism $\sigma'_{*,I}: \tilde{M}^{\prime, \bullet}_{*,I} \to \tilde{M}^{\bullet}_{*,I}$ splitting the given quasi-isomorphism in the other direction; more precisely, it is an inverse in the homotopy category to the given quasi-isomorphism. 

Since $\psi_{*,I}$ is a toric tower,
the map 
$\bA_{\psi_{*,I}}/(p) \to \tilde{\bA}_{\psi_{*,I}}/(p)$
(resp.\ $\bC^{[s,r]}_{\psi_{*,I}} \to \tilde{\bC}^{[s,r]}_{\psi_{*,I}}$)
admits a unique $\Gamma$-equivariant splitting induced by projection onto monomials in the $T_i$ with integral exponents, which is again an inverse in the homotopy category (compare \cite[Remark~7.1.11]{part2}).
Using such splittings, we obtain a morphism $\sigma_{*,I}: \tilde{M}^\bullet_{*,I} \to M^\bullet_{*,I}$.

In both cases, the splittings may (and will) be chosen compatibly with enlarging $I$ \emph{without changing $i(I)$}, by localizing the splitting constructed for $I = \{i(I)\}$.
Also, the splittings for $*=2$ may (and will) be taken to be base extensions of the splittings for $*=1$, so that the diagram
\begin{equation} \label{eq:splitting compatible}
\xymatrix{
\tilde{M}^{\prime,\bullet}_{1,I} \ar^{\sigma'_{1,I}}[r] \ar[d] & 
\tilde{M}^{\bullet}_{1,I} \ar^{\sigma_{1,I}}[r] \ar[d] &
M^\bullet_{1,I} \ar[d] \\
\tilde{M}^{\prime,\bullet}_{2,I} \ar^{\sigma'_{2,I}}[r] & 
\tilde{M}^{\bullet}_{2,I} \ar^{\sigma_{2,I}}[r] &
M^\bullet_{2,I}
}
\end{equation}
commutes.
 However, we cannot ensure any compatibility with enlarging $I$ when this changes the value of $i(I)$; this gives rise to some of the complexity in the ensuing arguments.
\end{defn}

We next use the splittings to interpolate the model complexes $M^\bullet_{*,I}$.
\begin{defn} \label{D:interpolate}
For $s \in \{1,\dots,m\}$, let $\tilde{N}^{\prime, \bullet}_{*,\bullet,s}$
be the subcomplex of $\tilde{M}^{\prime, \bullet}_{*,\bullet,s}$
composed of those terms for which the index $I$ satisfies $i(I) = s$.
Let $N^\bullet_{*,\bullet,s}$ be the subcomplex of $\tilde{N}^{\prime, \bullet}_{*,\bullet,s}$ defined by the inclusions $M^\bullet_{*,I} \to \tilde{M}^{\prime,\bullet}_{*,I}$ for all $I$ with $i(I) = s$; the fact that these inclusions define a subcomplex is due to the uniform derivation of the complexes $M^\bullet_{*,I}$ from the single tower $\psi_{*,s}$. For the same reason, the composition $\sigma_{*,\bullet} \circ \sigma'_{*,\bullet}$ defines a splitting $\tilde{N}^{\prime, \bullet}_{*,\bullet,s} \to N^{\bullet}_{*,\bullet,s}$ of the inclusion map, which provides an inverse in the homotopy category.

We may now view $\tilde{M}^{\prime, \bullet}_{*,\bullet,m}$ as the total complex associated to a sequence of morphisms
\[
\tilde{N}^{\prime, \bullet}_{*, \bullet,1} \to \cdots \to 
\tilde{N}^{\prime, \bullet}_{*, \bullet,m},
\]
where each term $\tilde{N}^{\prime, \bullet}_{*,\bullet,s}$ is isomorphic in the homotopy category to $N^{\bullet}_{*,\bullet,s}$. Moreover, the comparison map
$\tilde{N}^{\prime,\bullet}_{1,\bullet,s} \to \tilde{N}^{\prime, \bullet}_{2, \bullet,s}$
is represented in the homotopy category by the base extension morphism
$N^{\bullet}_{1,\bullet,s} \to N^{\bullet}_{2,\bullet,s}$, which 
by Remark~\ref{R:fingen cc} is completely continuous as long as this is true of the underlying ring homomorphism.
\end{defn}

\begin{remark} \label{R:reduce to fg}
We will prove below that the cohomology groups in  Proposition~\ref{P:direct image finiteness} are pseudocoherent. Concerning other assertions of the proposition,
we have already addressed compatibility with pro-finite \'etale covers
(see Definition~\ref{D:finiteness proper covering}).
To obtain vanishing in degrees above $2n$, it suffices to combine the following two observations: each individual
complex $M^\bullet_{*,I}$ has no cohomology above degree $n$;
and (if $X$ is quasicompact) the space $Y$ has cohomological dimension $\leq n$
\cite[Proposition~2.5.8]{dejong-vanderput}.
\end{remark}

We are now ready to prove Proposition~\ref{P:direct image finiteness}, starting with the type $\bA$ case.
\begin{proof}[Proof of Proposition~\ref{P:direct image finiteness}, type $\bA$ case]
As per Remark~\ref{R:reduce to fg}, it suffices to check that the cohomology groups are pseudocoherent, or even just finitely generated in light of Theorem~\ref{T:finitely generated to pseudocoherent type A}.

In the type $\bA$ case, the setup culminating in Definition~\ref{D:interpolate} is immediately susceptible to Lemma~\ref{L:Cartan-Serre homotopy} only when $\calF$ is killed by $p$,
because $\tilde{\bA}_{\tilde{X}}/(p^n)$ only admits the structure of a Banach ring for $n=1$. 
The commutativity conditions are all satisfied: the conditions in (a) hold because
$\tilde{N}^{\prime,\bullet}_{1,\bullet,s} \to \tilde{N}^{\prime, \bullet}_{2, \bullet,s}$ is a genuine morphism of genuine complexes (so in particular we may take all homotopies to be zero), whereas the conditions in (b) hold thanks to the commutativity of the diagrams
\eqref{eq:cosplitting compatible}, \eqref{eq:splitting compatible}.

In light of the previous discussion, when $\calF$ is killed by $p$, 
we may apply Lemma~\ref{L:Cartan-Serre homotopy} to deduce that the cohomology groups are contained in finitely generated modules, then \cite[Corollary~8.2.15]{part2} to deduce that they are finitely generated.
For general $\calF$, we use the $p$-torsion case to  deduce finite generation of the cohomology groups using Lemma~\ref{L:cohomology mod p}, taking $R=\tilde{\bA}_{\tilde{X}}$ and $\calA$ to be the category of $\tilde{\bA}_{\tilde{X}}$-modules equipped with commuting $\varphi^{\pm1},\Gamma$-actions. Note that the assumption on $\calA$ is ensured by \cite[Proposition 8.3.9]{part2} and Theorem~\ref{T:finitely generated to pseudocoherent type A}. 
\end{proof}

We next treat the type $\bC$ case.

\begin{proof}[Proof of Proposition~\ref{P:direct image finiteness}, type $\bC$ case]
As per Remark~\ref{R:reduce to fg}, it suffices to check that the cohomology groups are pseudocoherent, or even just finitely generated in light of 
Theorem~\ref{T:finitely generated to pseudocoherent}.
In the type $\bC$ case, the setup culminating in Definition~\ref{D:interpolate} is immediately susceptible to Lemma~\ref{L:Cartan-Serre homotopy}, with the same comments about the commutativity conditions; it thus follows that the cohomology groups are complete and contained in finitely generated modules. To promote this containment, we work through several layers of generality.
\begin{enumerate}
\item[(i)]
For $\calF$ killed by $t_\theta$, we proceed primarily by noetherian induction on the support of $\calF$, and secondarily by descending induction on cohomological degree.
Suppose that $\calF$ is killed by some ideal $J$ of $A$.
To establish finiteness of $h^i(\tilde{M}^{\prime, \bullet}_{*,\bullet,m})$ for some $i$
(given the same for all larger $i$), we work in the context of \cite[Remark~8.3.10]{part2}
(except that the letter $f$ is presently in use as a morphism, so we use $g$ for the ring element denoted $f$ in \textit{loc cit.}).

Let $Z_i$ and $Y_i$ be the modules of $i$-cocycles and $i$-coboundaries in 
$\tilde{M}^{\prime, \bullet}_{*,\bullet,m}$, so that the desired cohomology group is $M_i = Z_i/Y_i$. As described in \cite[Remark~8.3.10]{part2}, we can find $g \in A$ which is not a zero-divisor in $A/J$ or $\tilde{A}/J\tilde{A}$, such that $Z_i/Y_i$ admits a homomorphism to a finitely generated $\tilde{A}/J\tilde{A}$-module which becomes a split inclusion after inverting $g$; this then implies that the quotient of $M_i$ by its $g$-power-torsion submodule is pseudocoherent (using the induction hypotheses to verify conditions (a) and (b) of \cite[Remark~8.3.10]{part2}). 

Now suppose $M_i$ is not pseudocoherent, then the sequence of $g^n$-torsion submodules of $M_i$ does not stabilize as every $M_i[g^n]$ is pseudocoherent, and the union of this sequence contains the closure of $0$ in $M_i$. Moreover, the closure of $0$ in $M_i$ is not contained in any $M_i[g^n]$.  Otherwise, let $N_i$ be the quotient of $M_i$ by the closure of $0$.  Since $N_i$ is a Banach module, we may apply \cite[Remark~8.3.10]{part2} to deduce that $N_i[g^\infty]=N_i[g^n]$ for some $n$ because an infinite ascending union of Banach modules cannot be a Banach module unless the union stabilizes at some finite stage.  Therefore, we may conclude that the $g$-power-torsion submodules of $M_i$ stabilizes at some finite stage, yielding a contradiction.

Now repeat the construction after performing a base extension from $A$ to $A\{T\}$; let $Z'_i$ and $Y'_i$ be the resulting modules of $i$-cocycles and $i$-coboundaries, and let $M'_i = Z'_i/Y'_i$ be the quotient. Again, the closure of 0 in $M'_i$ consists entirely of $g$-power-torsion elements; however, if $M_i$ is not pseudocoherent, then for each sufficiently large $n$ there must exist some $z_n \in Z_i$ in the closure of $Y_i$ whose image in $M_i$ is killed by $g^n$ but not by $g^{n-1}$. By scaling each $z_n$ suitably, we may ensure that the sum $\sum_{n=0}^\infty z_n T^n$ converges to an element $z \in Z'_i$ in the closure of $Y'_i$ whose image in $M'_i$ is not killed by any power of $g$, a contradiction. This proves the desired finiteness result.
  
\item[(ii)]
For $\calF$ killed by some $t \in \tilde{\bC}^{[s,r]}_K$ coprime to $t_\theta$, 
\cite[Corollary~8.8.10]{part2} (as applied in Remark~\ref{R:coherent type C})
implies the desired finite generation.

\item[(iii)]
Using (i) and (ii), we deduce the claim whenever $\calF$ is killed by any nonzero
$t \in \tilde{\bC}^{[s,r]}_K$; 
we may thus argue as in (i), using some suitable 
$t \in \tilde{\bC}^{[s,r]}_K$ in place of $g$, to conclude.
\end{enumerate}

The proof is complete.
\end{proof}

\section{Base change}
\label{sec:base change}

As noted previously, to establish coherence of higher direct images, one must combine Proposition~\ref{P:direct image finiteness} with a certain compatibility with base change. We address this point next.

\begin{prop} \label{P:base change}
With hypotheses and notation as in Proposition~\ref{P:direct image finiteness},
for any index $j$, any perfectoid subdomain $\tilde{X}$ of $U_{j,\proet}$,
and any perfectoid subdomain $\tilde{X}'$ of $\tilde{X}_{\proet}$, the morphisms
\begin{equation} \label{eq:base change1}
H^i(\tilde{X}_{\proet}, \calF) \otimes_{\tilde{\bA}_{\tilde{X}}} \tilde{\bA}_{\tilde{X}'}
\to H^i(\tilde{X}'_{\proet}, \calF)
\quad
(\mbox{resp. }
H^i(\tilde{X}_{\proet}, \calF) \otimes_{\tilde{\bC}^{[s,r]}_{\tilde{X}}} \tilde{\bC}^{[s,r]}_{\tilde{X}'}
\to H^i(\tilde{X}'_{\proet}, \calF)
)
\end{equation}
are isomorphisms for all $i \geq 0$.
\end{prop}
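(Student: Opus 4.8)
The plan is to show that the entire construction underlying the proof of Proposition~\ref{P:direct image finiteness} is functorial in the base, and then to promote the resulting base-change map to an isomorphism by invoking Lemma~\ref{L:base extension cohomology Robba}. As in Remark~\ref{R:finiteness work locally}, one works locally on $X$ and puts oneself in the affinoid situation of Definition~\ref{D:finiteness proper covering}; as there, one may further assume that $\tilde{X} = \Spa(\tilde{A}, \tilde{A}^+)$ and $\tilde{X}' = \Spa(\tilde{A}', \tilde{A}^{\prime +})$ are total spaces of finite \'etale perfectoid towers over $X$, the general case of perfectoid subdomains following by pulling back the combinatorial data of Definition~\ref{D:finiteness proper covering} along the (possibly nonfinite) \'etale morphism at the foot of the tower. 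Write $R = \tilde{\bA}_{\tilde{X}}$, $R' = \tilde{\bA}_{\tilde{X}'}$ in the type $\bA$ case, and $R = \tilde{\bC}^{[s,r]}_{\tilde{X}}$, $R' = \tilde{\bC}^{[s,r]}_{\tilde{X}'}$ in the type $\bC$ case.

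The first step is to check that every ingredient entering Definition~\ref{D:interpolate} base-changes along $R \to R'$: the toric towers $\psi_{*,I}$ pull back to the corresponding towers over $\tilde{X}'$; the imperfect period rings satisfy $\bA_{\psi_{*,I}} \widehat\otimes_R R' \cong \bA_{\psi'_{*,I}}$ (resp.\ $\bC^{[s,r]}_{\psi_{*,I}} \widehat\otimes_R R' \cong \bC^{[s,r]}_{\psi'_{*,I}}$); the realization of $\calF$ as a module over the imperfect period ring base-changes because $\calF$ is a pseudocoherent sheaf and the relevant maps of perfectoid rings are pseudoflat (Remark~\ref{R:pseudoflat}); and the continuous $\Gamma$-cochain complexes $M^\bullet_{*,I}$, which one may replace by the finite Koszul complex on $\gamma_1 - 1, \dots, \gamma_n - 1$, commute with completed base extension. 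Since all of these identifications, together with the splittings and comparison maps of Definition~\ref{D:interpolate}, are literal base extensions, they are compatible on the nose in $C^+(\BMod_{R'})$, not merely up to homotopy; keeping track of the homotopy-coherence conditions of Remark~\ref{R:mapping cone func} and Lemma~\ref{L:Cartan-Serre homotopy}, it follows that the totalization $C^\bullet := \tilde{M}^{\prime,\bullet}_{*,\bullet,m}$ computing $H^\bullet(\tilde{X}_{\proet}, \calF)$ satisfies $C^\bullet \widehat\otimes_R R' \simeq \tilde{M}^{\prime,\bullet}_{*,\bullet,m}$ over $\tilde{X}'$, the analogous complex computing $H^\bullet(\tilde{X}'_{\proet}, \calF)$.

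It then remains to see that base extension along $R \to R'$ commutes with the cohomology of $C^\bullet$. Here $R \to R'$ is the map of period rings attached to a perfectoid subdomain inclusion, hence pseudoflat (Remark~\ref{R:pseudoflat}); in type $\bC$, after $\widehat\otimes_{\Qp} L$ for $L$ the $p$-cyclotomic extension it becomes an honest perfectoid subdomain inclusion, exactly as in the proof of Lemma~\ref{L:base extension cohomology Robba}. By Proposition~\ref{P:direct image finiteness}, the groups $h^i(C^\bullet)$ and $h^i(C^\bullet \widehat\otimes_R R')$ are pseudocoherent over $R$ and $R'$ respectively. In type $\bC$, Lemma~\ref{L:base extension cohomology Robba} then yields \eqref{eq:base change1} once one knows that $C^\bullet$ and its base extension are perfect complexes; in type $\bA$ one argues identically using the evident Witt-vector analogue of that lemma, or else reduces modulo $p$ to the Banach ring $R/pR$ and lifts along the $p$-adic filtration using Lemma~\ref{L:cohomology mod p}.

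The main obstacle will be verifying the perfectness hypothesis of Lemma~\ref{L:base extension cohomology Robba}. The terms of the \v{C}ech complex $\tilde{M}^{\prime,\bullet}_{*,I}$ are only pseudocoherent over enlarged period rings, not over $R$ itself, so no bounded finite-projective model is visible directly; instead one works with the Koszul models $M^\bullet_{*,I}$, which are perfect over the imperfect period rings with pseudocoherent cohomology, and descends perfectness and pseudocoherence along the toric tower to $R$ using the same finiteness and descent machinery of \cite{part2} invoked in the proof of Proposition~\ref{P:direct image finiteness} (Theorems~\ref{T:finitely generated to pseudocoherent type A} and~\ref{T:finitely generated to pseudocoherent}, the structural results on $\CPhi_X$, and the deep finiteness inputs flagged in Remark~\ref{R:Kiehl def}); the perfectness over $R'$ is supplied the same way via Proposition~\ref{P:direct image finiteness} applied over $\tilde{X}'$. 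A secondary point requiring care, already signposted in Remark~\ref{R:mapping cone func}, is to ensure that the comparison maps assembling the totalization are strictly, rather than merely homotopically, compatible with base change, so that ``base extension of a totalization is a totalization of base extensions''; this is precisely why the construction in \S\ref{subsec:pro-etale finiteness} was arranged with literal base-extension morphisms throughout.
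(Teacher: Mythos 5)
Your reduction of the problem is the right one, and it matches the paper's: by pseudoflatness of $\tilde{\bA}_{\tilde{X}} \to \tilde{\bA}_{\tilde{X}'}$ (resp.\ $\tilde{\bC}^{[s,r]}_{\tilde{X}} \to \tilde{\bC}^{[s,r]}_{\tilde{X}'}$) and pseudocoherence of the cohomology, the \emph{uncompleted} base extension of the computing complex $\tilde{M}^\bullet$ commutes with cohomology, and everything comes down to showing that the map from the uncompleted to the \emph{completed} base extension $\tilde{M}^{\star\bullet} \to \tilde{M}^{\prime\bullet}$ is a quasi-isomorphism. But your proposed resolution of that step has a genuine gap. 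You invoke Lemma~\ref{L:base extension cohomology Robba}, whose hypothesis is that $\tilde{M}^\bullet$ (and its completed base extension) is a \emph{perfect} complex over $\tilde{\bC}^{[s,r]}_{\tilde{X}}$; perfectness is exactly what makes completed and uncompleted tensor products agree termwise in the proof of that lemma, so it cannot be weakened to pseudocoherence. You acknowledge this is "the main obstacle" but the proposed fix --- descending perfectness from the Koszul models $M^\bullet_{*,I}$ over the imperfect period rings down to $R = \tilde{\bC}^{[s,r]}_{\tilde{X}}$ --- does not work: perfectness of a complex over a large $R$-algebra such as $\bC^{[s,r]}_{\psi_{*,I}}$ says nothing about perfectness over $R$ itself, the terms of all these complexes are huge Banach $R$-modules, and the machinery you cite (Theorems~\ref{T:finitely generated to pseudocoherent type A}, \ref{T:finitely generated to pseudocoherent}, etc.) yields only pseudocoherence of the cohomology, which gives a bounded-above finite free resolution but not the finite Tor-amplitude that perfectness requires. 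For these non-noetherian rings there is no reason for the pseudocoherent cohomology groups (e.g.\ $\theta$-local ones) to be perfect, so this route does not close.

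The paper avoids perfectness altogether. It first proves the completed-versus-uncompleted comparison for subdomains of the special Laurent form $\tilde{X}' = \{v : v(f) \geq 1\}$, using the presentation $0 \to \tilde{A}\{T\} \xrightarrow{\times(1-[\overline{f}]T)} \tilde{A}\{T\} \to \tilde{A}' \to 0$ and a descending induction on cohomological degree in which the snake lemma is applied separately to the modules of cocycles, coboundaries, and cohomology of $\tilde{M}^\bullet$ (Lemma~\ref{L:completion preserves cohomology2}); the only input about the terms $\tilde{M}^i$ is that they are Banach modules, so that $\times(1-[\overline{f}]T)$ is injective on $\tilde{M}^i\{T\}$. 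It then globalizes: such subdomains together with pro-finite \'etale covers give a neighborhood basis of $\tilde{X}_{\proet}$ (Corollary~\ref{C:completion preserves cohomology}), and for a general $\tilde{X}'$ one covers it by basis elements and uses that tensoring with $\bigoplus_j \tilde{\bA}_{\tilde{U}_j}$ is exact on pseudocoherent modules and faithful to conclude. If you want to repair your argument, you should replace the appeal to Lemma~\ref{L:base extension cohomology Robba} by an argument of this kind (or otherwise prove directly that completion does not change the cohomology of $\tilde{M}^{\star\bullet}$); as written, the central analytic point of the proposition is not established.
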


Again, we devote the entire remainder of this section to the proof of Proposition~\ref{P:base change},
and thus retain notation as therein.

\begin{remark} \label{R:cohomology pro-finite etale}
To begin, note that if $\tilde{X}'$ is a pro-finite \'etale cover of $\tilde{X}$, then
the claimed compatibility (i.e., the fact that the maps in \eqref{eq:base change1}
are isomorphisms) is already included in Proposition~\ref{P:direct image finiteness}.
\end{remark}

\begin{remark} \label{R:completion preserves cohomology}
From the proof of Proposition~\ref{P:direct image finiteness},
we obtain a bounded complex $\tilde{M}^\bullet$ such that
$H^i(\tilde{X}_{\proet}, \calF) \cong h^i(\tilde{M}^\bullet)$
(e.g., the totalization of the complex denoted $\tilde{M}^{\prime,\bullet}_{1,\bullet,m}$
in Definition~\ref{D:compute complex}).
Define the second complex 
\[
\tilde{M}^{\star \bullet} = \tilde{M}^\bullet \otimes_{\tilde{\bA}_{\tilde{X}}} \tilde{\bA}_{\tilde{X}'} 
\qquad
(\mbox{resp. } \tilde{M}^{\star \bullet} = \tilde{M}^\bullet \otimes_{\tilde{\bC}^{[s,r]}_{\tilde{X}}} \tilde{\bC}^{[s,r]}_{\tilde{X}'}).
\]
Define the third complex
\[
\tilde{M}^{\prime \bullet} = \tilde{M}^\bullet \widehat{\otimes}_{\tilde{\bA}_{\tilde{X}}} \tilde{\bA}_{\tilde{X}'}
\qquad
(\mbox{resp. } \tilde{M}^{\prime \bullet} \widehat{\otimes}_{\tilde{\bC}^{[s,r]}_{\tilde{X}}} \tilde{\bC}^{[s,r]}_{\tilde{X}'});
\]
then $H^i(\tilde{X}'_{\proet}, \calF) \cong h^i(\tilde{M}^{\prime \bullet})$.

By hypothesis,
we know that the $h^i(\tilde{M}^{\bullet})$
are pseudocoherent modules which vanish in large enough degrees.
By Remark~\ref{R:pseudoflat},
we have isomorphisms
\begin{equation} \label{eq:flat base change}
h^i(\tilde{M}^{\bullet}) \otimes_{\tilde{\bA}_{\tilde{X}}}
\tilde{\bA}_{\tilde{X}'} 
\cong
h^i(\tilde{M}^{\star \bullet})
\qquad
(\mbox{resp. }
h^i(\tilde{M}^\bullet) \otimes_{\tilde{\bC}^{[s,r]}_{\tilde{X}}}
\tilde{\bC}^{[s,r]}_{\tilde{X}'} 
\cong
h^i(\tilde{M}^{\star \bullet})
);
\end{equation}
the content of Proposition~\ref{P:base change} is thus that the morphism $\tilde{M}^{* \bullet} \to \tilde{M}^{\prime \bullet}$ is a quasi-isomorphism.
\end{remark}

\begin{lemma} \label{L:completion preserves cohomology2}
Suppose that there exists $f \in \calO(X)$ such that 
$\tilde{X}' = \{v \in \tilde{X}: v(f) \geq 1\}$.
Then the morphisms in \eqref{eq:base change1} are isomorphisms.
\end{lemma}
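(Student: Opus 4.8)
First I would invoke the reformulation of Remark~\ref{R:completion preserves cohomology}: writing $R$ for the relevant period ring $\tilde{\bA}_{\tilde{X}}$ (resp.\ $\tilde{\bC}^{[s,r]}_{\tilde{X}}$) and $R'$ for $\tilde{\bA}_{\tilde{X}'}$ (resp.\ $\tilde{\bC}^{[s,r]}_{\tilde{X}'}$), it suffices to show that $\tilde{M}^{\star\bullet} = \tilde{M}^\bullet \otimes_R R' \to \tilde{M}^{\prime\bullet} = \tilde{M}^\bullet \widehat{\otimes}_R R'$ is a quasi-isomorphism. Since the groups $h^i(\tilde{M}^\bullet)$ are pseudocoherent and $R \to R'$ is pseudoflat (Remark~\ref{R:pseudoflat}), that remark already identifies $h^i(\tilde{M}^{\star\bullet})$ with $h^i(\tilde{M}^\bullet) \otimes_R R'$, so the task reduces to producing, for each $i$, a natural isomorphism $h^i(\tilde{M}^\bullet) \otimes_R R' \cong h^i(\tilde{M}^\bullet \widehat{\otimes}_R R')$.

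The hypothesis $f \in \calO(X)$ is what makes $R'$ tractable. As in \S\ref{subsec:pro-etale finiteness} we may assume $X = \Spa(A, A^+)$ is affinoid and $\tilde{X}$ is the total space of a finite \'etale perfectoid tower over $X$; then $\tilde{X}' = \{v \in \tilde{X} : v(f) \geq 1\}$ is the base change along $\tilde{X} \to X$ of the Laurent subdomain $X' = \{v \in X : v(f) \geq 1\}$ of $X$. By the construction of the $p$-adic period sheaves and their functoriality under rational localizations of the rigid-analytic base \cite{part1, part2}, and because $f$ is $\varphi$-compatible (being pulled back from $\calO(X)$), this base change is visible on period rings as a Frobenius-compatible rational — in fact Laurent — localization $R \to R'$ of Banach rings; concretely, $R'$ is the (in type $\bA$, $p$-adically completed) Laurent localization $R\{f^{-1}\}$, where $f$ now denotes its image in $R$. (This is exactly what fails for a general perfectoid subdomain $\tilde{X}'$, where the localization lives only at the level of Witt vectors and is not controlled by $R$; the remaining cases of Proposition~\ref{P:base change} must be handled differently.)

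With this presentation in hand I would resolve $R'$ over $R$ by the two-term Tate complex
\[
T(f)^\bullet \ :=\ \bigl[\, R\{T\} \xrightarrow{\ 1 - fT\ } R\{T\} \,\bigr]
\]
placed in degrees $-1$ and $0$: the map $1 - fT$ is injective (argue on the lowest-degree coefficient), has closed image, and has cokernel $R'$, while each $R\{T\}$ is topologically free over $R$. Applying $\tilde{M}^\bullet \widehat{\otimes}_R (-)$ to this resolution gives a bicomplex whose totalization is $\tilde{M}^{\prime\bullet}$; running its spectral sequence in the $T(f)$-direction, and using that $(-)\widehat{\otimes}_R R\{T\}$ is exact (as $R\{T\}$ is topologically free) and commutes with the cohomology of $\tilde{M}^\bullet$ — whose coboundary submodules are closed, since its cohomology is pseudocoherent hence separated — the relevant page is $\bigl[\, h^i(\tilde{M}^\bullet)\{T\} \xrightarrow{1-fT} h^i(\tilde{M}^\bullet)\{T\} \,\bigr]$, whose only cohomology is $h^i(\tilde{M}^\bullet)\{T\}/(1-fT) \cong h^i(\tilde{M}^\bullet) \widehat{\otimes}_R R'$ in degree $0$. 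Hence the spectral sequence degenerates and $h^i(\tilde{M}^{\prime\bullet}) \cong h^i(\tilde{M}^\bullet) \widehat{\otimes}_R R'$. Finally, since $h^i(\tilde{M}^\bullet)$ is pseudocoherent over $R$ and $R \to R'$ is pseudoflat, $h^i(\tilde{M}^\bullet) \otimes_R R'$ is pseudocoherent over $R'$, hence complete for its natural topology by \cite[Corollary~1.2.9]{part2}, which coincides with the tensor-product topology; thus $h^i(\tilde{M}^\bullet) \otimes_R R' = h^i(\tilde{M}^\bullet) \widehat{\otimes}_R R'$, and combining with the previous isomorphism finishes the proof.

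The main obstacle is the middle step: extracting from \cite{part1, part2} the precise statement that a Laurent subdomain of $\tilde{X}$ cut out by a function pulled back from $\calO(X)$ induces on the period sheaves an honest Laurent localization of Banach rings (compatibly with $\varphi$), rather than a construction visible only after passing to Witt vectors. Once that is secured, the remaining ingredients — the Tate resolution, exactness of $\widehat{\otimes}_R$ against topologically free modules, and the coincidence of $\otimes_R$ with $\widehat{\otimes}_R$ for pseudocoherent modules — are routine.
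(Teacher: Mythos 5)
Your overall strategy coincides with the paper's: both arguments rest on the two-term resolution $0 \to \tilde{A}\{T\} \to \tilde{A}\{T\} \to \tilde{A}' \to 0$ (multiplication by $1-[\overline{f}]T$) of the localized period ring, followed by a comparison against the complex $\tilde{M}^\bullet$; your spectral sequence of the two-column bicomplex is exactly the paper's pair of snake-lemma diagrams. However, two steps need repair. First, the element you invert: $f \in \calO(X)$ has no image in $\tilde{\bA}_{\tilde{X}}$ (a ring of Witt vectors of the tilted structure sheaf), so ``$R' = R\{f^{-1}\}$ where $f$ denotes its image in $R$'' does not literally parse. The paper fixes this---the step you yourself flag as the main obstacle---by approximating $f$ so that $\tilde{X}' = \{v : v(\theta([\overline{f}])) \geq 1\}$ for some $\overline{f} \in \overline{\calO}(\tilde{X})$ (via \cite[Corollary~3.6.7]{part1}) and then running the resolution with the Teichm\"uller lift $[\overline{f}]$; no identification of $R \to R'$ as a rational localization of Banach rings is needed, only the exact sequences \eqref{eq:invert base extension} and \eqref{eq:invert base extension2} and their analogues over perfectoid subdomains of $Y$.

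Second, and more seriously, your degeneration argument relies on the assertion that the coboundary submodules of $\tilde{M}^\bullet$ are closed ``since its cohomology is pseudocoherent hence separated.'' Pseudocoherence gives completeness of $h^i(\tilde{M}^\bullet)$ for its \emph{natural} topology as a finite module, which is in general finer than the subquotient topology on $\tilde{Z}^i/\tilde{Y}^i$; it does not imply that $\tilde{Y}^i$ is closed in $\tilde{Z}^i$ (cf.\ Remark~\ref{R:noetherian}, and note that the proof of Proposition~\ref{P:direct image finiteness} expends considerable effort on precisely the possibility of a nontrivial closure of $0$ in these cohomology groups). The paper never asserts in one stroke that the functor $(\cdot)\{T\}$ commutes with cohomology; instead it proves exactness of the three sequences \eqref{eq:completion cohomology1}--\eqref{eq:completion cohomology3} by a descending induction that interleaves the diagrams \eqref{eq:completion cohomology diag1} and \eqref{eq:completion cohomology diag2}, feeding exactness in degree $i+1$, plus a look ahead at the diagram in degree $i-1$, into repeated applications of the snake lemma, with \cite[Remark~1.2.5]{part2} supplying the one injectivity statement that cannot be obtained diagrammatically. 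You should replace your one-line strictness claim with this (or an equivalent) bookkeeping argument; the rest of your final paragraph (pseudoflatness plus completeness of pseudocoherent modules to identify $\otimes$ with $\widehat{\otimes}$ on cohomology) then matches the paper's use of \eqref{eq:flat base change}.
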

\begin{proof}
Retain notation as in Remark~\ref{R:completion preserves cohomology}.
By approximating $f$ suitably as in \cite[Corollary~3.6.7]{part1},
we can find $\overline{f} \in \overline{\calO}(\tilde{X})$ such that
$\tilde{X}' = \{v \in \tilde{X}: v(\theta([\overline{f}])) \geq 1\}$.
Put $\tilde{A} = \tilde{\bA}_{\tilde{X}}, \tilde{A}' = \tilde{\bA}_{\tilde{X}'}$
(resp.\ $\tilde{A} = \tilde{\bC}^{[s,r]}_{\tilde{X}}, \tilde{A}' = \tilde{\bC}^{[s,r]}_{\tilde{X}'}$);  we then have the exact sequence
\begin{equation} \label{eq:invert base extension}
0 \to \tilde{A}\{T\} \stackrel{\times 1-[\overline{f}] T}{\longrightarrow} \tilde{A}\{T\} \to \tilde{A}' \to 0.
\end{equation}
Similarly, for any perfectoid subdomain $\tilde{U}$ of $Y$ lying over $\tilde{X}$,
if we put $\tilde{U}' = \tilde{U} \times_{\tilde{X}} \tilde{X}'$
and $\tilde{B} = \tilde{\bA}_{\tilde{U}}, \tilde{B}' = \tilde{\bA}_{\tilde{U}'}$
(resp.\ $\tilde{B} = \tilde{\bC}^{[s,r]}_{\tilde{U}}, \tilde{B}' = \tilde{\bC}^{[s,r]}_{\tilde{U}'}$);  we have an exact sequence
\begin{equation} \label{eq:invert base extension2}
0 \to \tilde{B}\{T\} \stackrel{\times 1-[\overline{f}] T}{\longrightarrow} \tilde{B}\{T\} \to \tilde{B}' \to 0.
\end{equation}
Put
\begin{gather*}
\tilde{Z}^i = \ker(\tilde{M}^i \to \tilde{M}^{i+1}), \qquad
\tilde{Y}^i = \image(\tilde{M}^{i-1} \to \tilde{M}^{i}), \\
\tilde{Z}^{\prime i} = \ker(\tilde{M}^{\prime i} \to \tilde{M}^{\prime i+1}), \qquad
\tilde{Y}^{\prime i} = \image(\tilde{M}^{\prime i-1} \to \tilde{M}^{\prime i}).
\end{gather*}
For $* = \tilde{M}^i, \tilde{Y}^i, \tilde{Z}^i, h^i(\tilde{M}^\bullet)$,
let $*\{T\}$ be the completion of $*[T]$ (i.e., the module $* \otimes_A A[T]$
viewed as polynomials with coefficients in $*$) for the Gauss norm. For each $i$, the sequence
\[
0 \to \tilde{M}^i\{T\} \stackrel{\times 1-[\overline{f}]T}{\longrightarrow} \tilde{M}^i\{T\} \to \tilde{M}^{\prime i} \to 0
\]
is exact: exactness at the middle and right follow by (uncompleted) base extension from \eqref{eq:invert base extension2}, and exactness at the left follows by considering formal power series in $T$ (compare \cite[Remark~1.2.5]{part2}).

We prove by descending induction on $i$ that the sequences
\begin{gather}
\label{eq:completion cohomology1}
0 \to \tilde{Y}^i\{T\}\stackrel{\times 1-[\overline{f}]T}{\longrightarrow} \tilde{Y}^i\{T\} \to \tilde{Y}^{\prime i} \to 0 \\
\label{eq:completion cohomology2}
0 \to \tilde{Z}^i\{T\}\stackrel{\times 1-[\overline{f}]T}{\longrightarrow} \tilde{Z}^i\{T\} \to \tilde{Z}^{\prime i} \to 0 \\
\label{eq:completion cohomology3}
0 \to h^i(\tilde{M}^\bullet)\{T\}\stackrel{\times 1-[\overline{f}]T}{\longrightarrow} h^i(\tilde{M}^\bullet)\{T\} \to h^i(\tilde{M}^{\prime \bullet}) \to 0
\end{gather}
are exact.
Given \eqref{eq:completion cohomology1} with $i$ replaced by $i+1$,
in the diagram
\begin{equation} \label{eq:completion cohomology diag1}
\xymatrix{
 & 0 \ar[d] & 0 \ar[d] & 0 \ar[d] & \\
0 \ar[r] & \tilde{Z}^i\{T\} \ar[r] \ar[d] & \tilde{M}^i\{T\} \ar[r] \ar[d] & \tilde{Y}^{i+1}\{T\} \ar[r] \ar[d] & 0 \\
0 \ar[r] & \tilde{Z}^i\{T\} \ar[r] \ar[d] & \tilde{M}^i\{T\} \ar[r] \ar[d] & \tilde{Y}^{i+1}\{T\} \ar[r] \ar[d] & 0 \\
0 \ar[r] & \tilde{Z}^{\prime i} \ar[r] \ar[d] & \tilde{M}^{\prime i} \ar[r] \ar[d] & \tilde{Y}^{\prime i+1} \ar[r] \ar[d] & 0 \\
& 0 & 0 & 0
}
\end{equation}
all three rows and the middle and right columns are exact; by the snake lemma, 
we infer that \eqref{eq:completion cohomology2} is exact.
In the diagram
\begin{equation} \label{eq:completion cohomology diag2}
\xymatrix{
 & 0 \ar[d] & 0 \ar[d] & 0 \ar[d] & \\
0 \ar[r] & \tilde{Y}^i\{T\} \ar[r] \ar[d] & \tilde{Z}^i\{T\} \ar[r] \ar[d] & h^i(\tilde{M}^\bullet)\{T\} \ar[r] \ar[d] & 0 \\
0 \ar[r] & \tilde{Y}^i\{T\} \ar[r] \ar[d] & \tilde{Z}^i\{T\} \ar[r] \ar[d] & h^i(\tilde{M}^\bullet)\{T\} \ar[r] \ar[d] & 0 \\
0 \ar[r] & \tilde{Y}^{\prime i} \ar[r] \ar[d] & \tilde{Z}^{\prime i} \ar[r] \ar[d] & h^i(\tilde{M}^{\prime \bullet}) \ar[r] \ar[d] & 0 \\
& 0 & 0 & 0
}
\end{equation}
all three rows and the middle column are exact.
In the right column, exactness at the top holds by \cite[Remark~1.2.5]{part2}
and exactness at the bottom holds because the bottom row and the middle column are exact.
By the snake lemma again, we deduce that the left column is exact at the top and middle.

We now peek ahead to the diagram \eqref{eq:completion cohomology diag1} with $i$ replaced by $i-1$;
again, all three rows and the middle column are exact, which forces the right column to be exact at the bottom as well as in the other two positions. Returning to \eqref{eq:completion cohomology diag2}, we now have exactness in all rows and the left and middle columns; by the snake lemma once more, we deduce that the right column is exact. To summarize, we now have exactness of all three equations
 \eqref{eq:completion cohomology1},  \eqref{eq:completion cohomology2},  \eqref{eq:completion cohomology3}, thus completing the induction.
 
From \eqref{eq:invert base extension} and \eqref{eq:completion cohomology3}, we see that
$h^i(\tilde{M}^{* \bullet}) \to h^i(\tilde{M}^{\prime \bullet})$ is an isomorphism for all $i$. This is the desired result.
\end{proof}

\begin{cor} \label{C:completion preserves cohomology}
There exists a neighborhood basis $\calB$ of $\tilde{X}_{\proet}$ such that for each
$\tilde{X}'\in \calB$, the morphisms \eqref{eq:base change1} are isomorphisms.
\end{cor}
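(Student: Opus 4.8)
The plan is to take $\calB$ to consist of those perfectoid subdomains of $\tilde{X}$ obtained by composing a pro-finite \'etale cover with a finite chain of single-function rational localizations of the two simple shapes $\{v(f) \geq 1\}$ and $\{v(f) \leq 1\}$ (with $f$ in the structure ring of the current stage), and then to establish the base-change isomorphisms by iterating the argument of Lemma~\ref{L:completion preserves cohomology2} along such a chain, disposing of the pro-finite \'etale part by Remark~\ref{R:cohomology pro-finite etale}.

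First I would check that this $\calB$ is indeed a neighborhood basis of $\tilde{X}_{\proet}$. Perfectoid subdomains form a neighborhood basis by construction, and each of them is a tower of finite \'etale covers over a rational localization of $\tilde{X}$, which is again perfectoid since $\tilde{X}$ is; so it suffices to see that an arbitrary rational neighborhood $U = \{v \in \tilde{X} : v(f_i) \leq v(g),\ i = 1,\dots,n\}$ of a point $v_0$ contains a smaller neighborhood of $v_0$ cut out by a chain of simple localizations. Since $g(v_0) \neq 0$, for a topologically nilpotent unit $\pi$ of $K$ and $N$ large the simple Laurent localization $W_0 = \{v : v(g) \geq v(\pi^N)\}$ contains $v_0$, and on $W_0$ the element $g$ becomes a unit (its inverse $\pi^N/g$ lies in $\calO(W_0)$ by construction). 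Hence $f_i/g \in \calO(W_0)$, and $W_0$ together with the simple Weierstrass localizations $\{v \in W_0 : |f_i/g|_v \leq 1\}$ cuts out a neighborhood of $v_0$ inside $U$. This realizes $\calB$ as a neighborhood basis.

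Next I would establish the base-change isomorphisms for $\tilde{X}' \in \calB$. Write $\tilde{X}' \to \tilde{X}$ as a pro-finite \'etale cover followed by a chain $\tilde{X}' = \tilde{X}_\ell \to \cdots \to \tilde{X}_0 = \tilde{X}$ of simple localizations. The pro-finite \'etale part contributes only isomorphisms by Remark~\ref{R:cohomology pro-finite etale}, and by transitivity of the maps \eqref{eq:base change1} it is enough to treat a single step $\tilde{X}_{k+1} \to \tilde{X}_k$. Each $\tilde{X}_k$ is still a perfectoid subdomain of $U_{j,\proet}$, so Proposition~\ref{P:direct image finiteness} guarantees that the cohomology of $\calF$ over $\tilde{X}_k$ is pseudocoherent and vanishes in large degrees --- precisely what the argument of Lemma~\ref{L:completion preserves cohomology2} needs. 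For a step of shape $\{v(f) \geq 1\}$ this is Lemma~\ref{L:completion preserves cohomology2} as stated; for a step of shape $\{v(f) \leq 1\}$ one reruns the same snake-lemma induction, replacing the exact sequence \eqref{eq:invert base extension} with $0 \to \tilde{A}\{T\} \xrightarrow{\times (T - [\overline{f}])} \tilde{A}\{T\} \to \tilde{A}' \to 0$, injectivity of $\times(T - [\overline{f}])$ following from $|\overline{f}| \leq 1$ together with the decay of the coefficients of an element of $\tilde{A}\{T\}$. Composing the resulting isomorphisms along the chain gives \eqref{eq:base change1} for $\tilde{X}'$.

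I expect the only real friction to be the reduction of a general rational localization to a chain of single-function ones --- the ``invert $g$ on a small Laurent domain'' step above --- and the bookkeeping needed to confirm that each intermediate $\tilde{X}_k$ remains a perfectoid subdomain of $U_{j,\proet}$, so that Proposition~\ref{P:direct image finiteness}, and hence the hypotheses of Lemma~\ref{L:completion preserves cohomology2}, stay available at every stage. Both are routine, so I do not anticipate a serious obstacle beyond careful bookkeeping; everything substantive has already been proved in Lemma~\ref{L:completion preserves cohomology2}.
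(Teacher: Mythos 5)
Your overall architecture is the same as the paper's: the printed proof is a two-line combination of Lemma~\ref{L:completion preserves cohomology2}, a cited neighborhood-basis statement from the companion paper (giving a basis of $X_{\et}$, hence of $\tilde{X}_{\et}$, by simple Laurent domains $\{v(f) \geq 1\}$ already covered by the lemma), and Remark~\ref{R:cohomology pro-finite etale}. Your chain-of-simple-localizations plus transitivity argument, and your observation that every intermediate stage remains a perfectoid subdomain of $U_{j,\proet}$ so that Proposition~\ref{P:direct image finiteness} keeps supplying the pseudocoherence and vanishing hypotheses, are sound and consistent with this. The substantive difference is that your hand-built basis forces you to use the Weierstrass shape $\{v(f) \leq 1\}$ (to impose $|f_i/g| \leq 1$ after inverting $g$), which Lemma~\ref{L:completion preserves cohomology2} does not cover as stated, so you must rerun its proof with $1 - [\overline{f}]T$ replaced by $T - [\overline{f}]$; the paper's route avoids this entirely by citing a basis consisting only of the Laurent shape.

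Within that extension, your justification of injectivity of multiplication by $T - [\overline{f}]$ is wrong as written. You cannot assume $|\overline{f}| \leq 1$: if $\overline{f}$ were power-bounded the Weierstrass domain would be all of $\tilde{X}$ and there would be nothing to localize, and for unbounded $\overline{f}$ the norm estimate does not close (one gets $a_0 = [\overline{f}]^k a_k$ with $|[\overline{f}]|^k \to \infty$ against $|a_k| \to 0$, which is inconclusive). The correct argument is purely algebraic: comparing coefficients in $(\sum_n a_n T^n)(T - [\overline{f}]) = 0$ gives $[\overline{f}] a_0 = 0$ and $a_{n-1} = [\overline{f}] a_n$, so injectivity follows once $[\overline{f}]$ is a non-zero-divisor, which one can arrange since Teichm\"uller lifts of non-zero-divisors of a perfect ring are non-zero-divisors. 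You also still need exactness of the modified three-term sequence on the right (i.e., that the ideal $(T - [\overline{f}])$ is closed and presents $\tilde{A}'$), which requires the same input from the companion papers that the proof of Lemma~\ref{L:completion preserves cohomology2} invokes for \eqref{eq:invert base extension}. Finally, your reduction of a general perfectoid subdomain to ``pro-finite \'etale over a chain of rational localizations'' quietly passes through the local structure of \'etale morphisms (open immersion composed with finite \'etale composed with open immersion); the paper's citation absorbs this, but if you argue by hand you should say a word about it.
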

\begin{proof}
Combine Lemma~\ref{L:completion preserves cohomology2} with \cite[Lemma~2.4.6]{part2}
(to get a neighborhood basis of $X_{\et}$ and hence of $\tilde{X}_{\et}$)
and Remark~\ref{R:cohomology pro-finite etale} (to enlarge this to a neighborhood basis of $\tilde{X}_{\proet}$).
\end{proof}

\begin{proof}[Proof of Proposition~\ref{P:base change}]
By Corollary~\ref{C:completion preserves cohomology},
there exists a covering $\{\tilde{U}_j\}$ of $\tilde{X}'$ such that in the composition
\begin{gather*}
\bigoplus_j H^i(\tilde{X}_{\proet}, \calF) \otimes_{\tilde{\bA}_{\tilde{X}}} \tilde{\bA}_{\tilde{U}_j}
\to
\bigoplus_j H^i(\tilde{X}'_{\proet}, \calF) \otimes_{\tilde{\bA}_{\tilde{X}'}} \tilde{\bA}_{\tilde{U}_j}
\to
\bigoplus_j H^i(\tilde{U}_{j,\proet}, \calF) \\
(\mbox{resp. } 
\bigoplus_k H^i(\tilde{X}_{\proet}, \calF) \otimes_{\tilde{\bC}^{[s,r]}_{\tilde{X}}} \tilde{\bC}^{[s,r]}_{\tilde{U}_j}
\to
\bigoplus_k H^i(\tilde{X}'_{\proet}, \calF) \otimes_{\tilde{\bC}^{[s,r]}_{\tilde{X}'}} \tilde{\bC}^{[s,r]}_{\tilde{U}_j}
\to
\bigoplus_k H^i(\tilde{U}_{j,\proet}, \calF) ),
\end{gather*}
both the second arrow and the composition are isomorphisms; consequently, the first
arrow is also an isomorphism. That is,
the morphisms in \eqref{eq:base change1}
become isomorphisms after tensoring over $\tilde{\bA}_{\tilde{X}}$
(resp.\ over $\tilde{\bC}^{[s,r]}_{\tilde{X}}$)
with $\bigoplus_j \tilde{\bA}_{\tilde{U}_j}$ (resp.\
over $\bigoplus_j \tilde{\bC}^{[s,r]}_{\tilde{X}}$).
Since this tensoring is exact on pseudocoherent modules
(by Remark~\ref{R:pseudoflat})
and faithful (by \cite[Lemma~2.3.12]{part1}), this implies the desired isomorphism.
\end{proof}

\section{Higher direct images and applications}
\label{sec:higher direct images}

With the preceding calculations in hand, we derive global conclusions about the relative cohomology of coherent $(\varphi, \Gamma)$-modules, and consequences for local systems.
\begin{theorem} \label{T:direct image}
Let $f: Y \to X$ be a smooth proper morphism of relative dimension $n$ of rigid analytic spaces over $K$. 
Let $\calF$ be a coherent $(\varphi, \Gamma)$-module over $\tilde{\bA}_Y$ (resp.\ $\tilde{\bC}^{[s,r]}_Y$).
Then the higher direct images $R^i f_{\proet *} \calF$ are coherent $(\varphi, \Gamma)$-modules
over $\tilde{\bA}_X$ (resp. $\tilde{\bC}^{[s,r]}_X$); moreover, these sheaves
vanish for $i > 2n$.
\end{theorem}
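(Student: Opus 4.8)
The plan is to deduce Theorem~\ref{T:direct image} by combining the local finiteness statement of Proposition~\ref{P:direct image finiteness} with the base-change compatibility of Proposition~\ref{P:base change}, using the descent results for pseudocoherent $(\varphi,\Gamma)$-modules recalled above. First I would fix the open covering $\{U_j\}$ of $X$ produced by Proposition~\ref{P:direct image finiteness}. For each $j$ and each perfectoid subdomain $\tilde{X}$ of $U_{j,\proet}$, the groups $H^i((Y\times_X\tilde{X})_{\proet},\calF|_{Y\times_X\tilde{X}})$ are pseudocoherent over $\tilde{\bA}_{\tilde{X}}$ (resp.\ $\tilde{\bC}^{[s,r]}_{\tilde{X}}$), they vanish for $i>2n$, and their formation is compatible with passage to pro-finite \'etale covers. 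The sheaf $R^if_{\proet *}\calF$ on $X_{\proet}$ is the sheafification of $W\mapsto H^i((Y\times_X W)_{\proet},\calF|_{Y\times_X W})$; restricting to the perfectoid subdomains of $U_{j,\proet}$, Proposition~\ref{P:base change} says precisely that this presheaf already has the correct value on such subdomains up to base change, i.e.\ the maps \eqref{eq:base change1} are isomorphisms. Hence on $U_{j,\proet}$ the sheaf $R^if_{\proet *}\calF$ is represented, on each perfectoid subdomain $\tilde{X}$, by the pseudocoherent module $H^i(\tilde{X}_{\proet},\calF)$, with transition maps given by (pseudoflat) base change.

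Next I would invoke the stack property for pseudocoherent $\tilde{\bA}_X$-modules (resp.\ pseudocoherent $\tilde{\bC}^{[s,r]}_X$-modules) over perfectoid adic Banach rings for the pro-\'etale topology, namely \cite[Theorem~4.3.3]{part2}: since $R^if_{\proet *}\calF|_{U_j}$ is pro-\'etale-locally (on the perfectoid subdomains, which generate $U_{j,\proet}$ by \cite[Lemma~3.3.26]{part2}) given by a pseudocoherent module with base-change-compatible transition maps, it descends to a genuine sheaf of pseudocoherent $\tilde{\bA}_{U_j}$-modules (resp.\ $\tilde{\bC}^{[s,r]}_{U_j}$-modules). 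The Frobenius structure on $\calF$ induces a Frobenius semilinear endomorphism on each $H^i(\tilde{X}_{\proet},\calF)$ compatible with base change (the Artin--Schreier / $(\varphi-1)$-complex computing the cohomology carries $\varphi$), and the isomorphism $\varphi^*\calF\xrightarrow{\sim}\calF$ yields, by functoriality and the base-change isomorphisms \eqref{eq:base change1}, an isomorphism $\varphi^*(R^if_{\proet *}\calF)\xrightarrow{\sim}R^if_{\proet *}\calF$ after noting that $\varphi$ on $\tilde{\bA}_{\tilde{X}}$ (resp.\ $\tilde{\bC}$) is flat enough that $\varphi^*$ commutes with taking cohomology here. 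Thus $R^if_{\proet *}\calF|_{U_j}$ is a pseudocoherent $(\varphi,\Gamma)$-module, i.e.\ an object of $\CPhi_{U_j}$ in the type $\bC$ case (or a coherent $(\varphi,\Gamma)$-module over $\tilde{\bA}_{U_j}$ in the type $\bA$ case, by Remark~\ref{R:coherent type A}). The $U_j$ cover $X$, and on overlaps the descended sheaves agree because both restrict to the same pro-\'etale-locally-defined module; applying the stack property once more (now gluing over the $U_j$, which form an \'etale---hence pro-\'etale---cover of $X$) we conclude that $R^if_{\proet *}\calF$ is a coherent $(\varphi,\Gamma)$-module over $\tilde{\bA}_X$ (resp.\ $\tilde{\bC}^{[s,r]}_X$) globally. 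The vanishing for $i>2n$ is immediate since it holds on each $U_j$.

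The main obstacle, I expect, is bookkeeping rather than any deep new input: one must check that the base-change isomorphisms \eqref{eq:base change1} are natural enough to serve as the cocycle/transition data required by the descent statement \cite[Theorem~4.3.3]{part2}, and that the Frobenius structures glue compatibly. Concretely, the subtle point is that Proposition~\ref{P:base change} is stated for a pair $\tilde{X}'\subseteq\tilde{X}$ of perfectoid subdomains, so to feed it into a stack/descent argument one should verify the obvious compatibility for a triple $\tilde{X}''\subseteq\tilde{X}'\subseteq\tilde{X}$, which follows formally from the explicit construction of the maps in Remark~\ref{R:completion preserves cohomology} as the natural maps on cohomology of a fixed complex $\tilde{M}^\bullet$ base-changed along successive pseudoflat morphisms. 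A secondary point is ensuring that the $\Gamma$-action (absorbed into the pro-\'etale structure) is respected; but since we work throughout with sheaves on $X_{\proet}$ this is automatic. Once these naturality checks are in place, the theorem follows by assembling Propositions~\ref{P:direct image finiteness} and~\ref{P:base change} through the cited descent theorems.
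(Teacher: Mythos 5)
Your proposal is correct and follows exactly the paper's route: the paper's entire proof of Theorem~\ref{T:direct image} is the one-line observation that Propositions~\ref{P:direct image finiteness} and~\ref{P:base change} combine to give the result, with the gluing implicitly supplied by the stack property of pseudocoherent modules for the pro-\'etale topology that you cite. Your elaboration of the descent bookkeeping (cocycle compatibility for triples of perfectoid subdomains, gluing of the Frobenius structure) is a faithful unpacking of what the paper leaves implicit.
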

\begin{proof}
By combining Proposition~\ref{P:direct image finiteness}
and Proposition~\ref{P:base change}, we immediately obtain the desired conclusion.
\end{proof}

\begin{remark} \label{R:acyclicity}
In light of the acyclicity of pseudocoherent sheaves on affinoid perfectoid spaces
\cite[Corollary~3.5.6]{part2}, Theorem~\ref{T:direct image} implies \emph{a posteriori} that the assertions of Proposition~\ref{P:direct image finiteness} 
and Proposition~\ref{P:base change} both hold for the trivial covering of $X$, without any restriction on $K$. (Namely, compute the cohomology groups from the higher direct images using the Leray spectral sequence.)
\end{remark}

By specializing $X$ to a point, we obtain the following corollary.
\begin{theorem} \label{T:main finiteness}
Assume that $K$ is a finite extension of $\Qp$.
Let $X$ be a smooth proper rigid analytic variety of dimension $n$ over $K$.
Let $\calF$ be a coherent $(\varphi, \Gamma)$-module over $\tilde{\bA}_X$ (resp.\ $\tilde{\bC}_X$).
Then the cohomology groups $H^i_{\varphi,\Gamma}(\calF)$ are finite $\ZZ_p$-modules
(resp.\ finite-dimensional $\Qp$-vector spaces); moreover, these groups vanish for $i > 2n+2$.
\end{theorem}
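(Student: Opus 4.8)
The plan is to deduce Theorem~\ref{T:main finiteness} from Theorem~\ref{T:direct image} by taking $X$ to be a point. Concretely, let $Z = \Spa(K, \calO_K)$ and view the given smooth proper rigid variety $X$ of dimension $n$ as the source of a smooth proper morphism $f: X \to Z$ of relative dimension $n$. Then Theorem~\ref{T:direct image} tells us that the higher direct images $R^i f_{\proet *} \calF$ are coherent $(\varphi,\Gamma)$-modules over $\tilde{\bA}_Z$ (resp.\ $\tilde{\bC}_Z$), and that they vanish for $i > 2n$. Moreover, by the acyclicity observed in Remark~\ref{R:acyclicity}, each such higher direct image, evaluated on a pro-finite \'etale cover trivializing it, computes $H^i(X_{\proet}, \calF)$ directly — that is, there is no Leray correction term over a point, and $H^i_{\varphi,\Gamma}(\calF)$ is the $(\varphi,\Gamma)$-hypercohomology of a complex whose terms in each degree $i$ are the evaluations of coherent $(\varphi,\Gamma)$-modules over $\tilde{\bA}_Z$ (resp.\ $\tilde{\bC}_Z$) that vanish for $i>2n$.

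The first main step is therefore to reduce the computation of $H^i_{\varphi,\Gamma}(\calF)$ for the coherent $(\varphi,\Gamma)$-module $\calF$ over $\tilde{\bA}_X$ (resp.\ $\tilde{\bC}_X$) to a two-term $\varphi-1$ complex whose entries are coherent $(\varphi,\Gamma)$-modules over the base. Recall that $H^i_{\varphi,\Gamma}$ is defined as the total cohomology of the double complex $0 \to \calF^\bullet \xrightarrow{\varphi-1} \calF^\bullet \to 0$, where $\calF^\bullet$ here denotes a complex computing pro-\'etale cohomology of $\calF$ as a sheaf (forgetting $\varphi$). Applying $Rf_{\proet *}$ termwise and using Theorem~\ref{T:direct image}, the complex $Rf_{\proet *}\calF$ over $\tilde{\bA}_Z$ (resp.\ $\tilde{\bC}_Z$) has coherent $(\varphi,\Gamma)$-module cohomology $\calG^i := R^i f_{\proet*}\calF$ concentrated in degrees $0 \le i \le 2n$, and there is a spectral sequence (or, since everything is bounded, simply a hypercohomology computation) expressing $H^j_{\varphi,\Gamma}(\calF)$ in terms of the groups $H^k_{\varphi,\Gamma}$ of the $\calG^i$. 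Thus it suffices to show: for any coherent $(\varphi,\Gamma)$-module $\calG$ over $\tilde{\bA}_Z$ (resp.\ $\tilde{\bC}_Z$) with $K$ a finite extension of $\Qp$, the groups $H^k_{\varphi,\Gamma}(\calG)$ are finite $\ZZ_p$-modules (resp.\ finite-dimensional $\Qp$-vector spaces) and vanish for $k > 2$.

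The second main step, then, is this "point case": finiteness and vanishing for the $(\varphi,\Gamma)$-cohomology of a coherent $(\varphi,\Gamma)$-module over the base field. This is precisely the classical finiteness theorem for $(\varphi,\Gamma)$-modules — the analogue of Theorem~\ref{T:integral ordinary phi-Gamma finiteness} and Theorem~\ref{T:ordinary phi-Gamma finiteness}, but now for relative $(\varphi,\Gamma)$-modules over $\tilde{\bA}_K$ (resp.\ $\tilde{\bC}_K$) in the sense of \cite{part1, part2}. Over a point, $\tilde{\bC}_K$ is the (extended) Robba ring associated to $K$, and the pro-\'etale cohomology over $\Spa(K,\calO_K)$ reduces to the continuous cohomology of $\Gal(\overline{K}/K)$; this is Herr's $(\varphi,\Gamma)$-module description of Galois cohomology, and one knows that $H^k_{\cont}(\Gal(\overline{K}/K), -)$ vanishes for $k > 2$ (cohomological dimension $2$ of a $p$-adic field) — this accounts for the extra "$+2$" in the degree bound $2n+2$. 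For finiteness, one either invokes the known finiteness of Galois cohomology of $(\varphi,\Gamma)$-modules in this generality, or re-proves it by the Cartan-Serre method of \S\ref{subsec:finiteness} applied to an explicit model over intervals $[s,r]$, exactly as in Theorem~\ref{T:ordinary phi-Gamma finiteness}, now allowing generalized (pseudocoherent) objects and feeding in Lemma~\ref{L:cohomology mod p} to handle the integral case. Combining the two steps gives the bound $i > 2n + 2 \Rightarrow H^i_{\varphi,\Gamma}(\calF) = 0$ and the stated finiteness.

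The main obstacle is bookkeeping rather than a genuinely new idea: one must be careful that the "higher direct image then $\varphi-1$" construction really does compute $H^i_{\varphi,\Gamma}(\calF)$ compatibly — i.e., that the hypercohomology spectral sequence $H^a_{\varphi,\Gamma}(R^b f_{\proet*}\calF) \Rightarrow H^{a+b}_{\varphi,\Gamma}(\calF)$ is legitimate and that the two $\varphi$-actions (the one internal to $\calF$ and the one on the base period sheaf) are correctly matched; this rests on the base-change compatibility established in Proposition~\ref{P:base change} and on the acyclicity noted in Remark~\ref{R:acyclicity}. The other slightly delicate point is that over a perfectoid $K$ the category $\CPhi_K$ contains $\theta$-local objects which are not projective, so one cannot simply cite the classical projective $(\varphi,\Gamma)$-module literature verbatim; but the $\theta$-local and co-$\theta$-projective d\'evissage recorded in Definition~\ref{D:co-theta-projective} and Remark~\ref{R:coherent type C}, together with Lemma~\ref{L:cohomology mod p}, reduces the point case to pieces that are either projective or supported on honest rigid subspaces, where the Cartan-Serre argument applies directly. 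Since the hypothesis here is that $K$ is a \emph{finite} extension of $\Qp$, hence discretely valued, every object of $\CPhi_K$ is already co-$\theta$-projective, so this last complication does not even arise and the point case is exactly the Herr--Liu theorem for $(\varphi,\Gamma)$-modules over finite extensions of $\Qp$.
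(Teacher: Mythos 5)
Your proposal is correct and follows essentially the same route as the paper: specialize Theorem~\ref{T:direct image} to the structure morphism $X \to \Spa(K,K^+)$ to get coherent higher direct images vanishing above degree $2n$, identify the resulting objects over the point with classical generalized $(\varphi,\Gamma)$-modules (the paper cites \cite[Theorems~5.7.10, 5.7.11]{part2} for this descent to the imperfect period rings), and then apply the Herr--Liu finiteness and degree-$\leq 2$ vanishing of Theorems~\ref{T:integral ordinary phi-Gamma finiteness} and~\ref{T:ordinary phi-Gamma finiteness} through the hypercohomology spectral sequence to obtain the bound $2n+2$. Your observation that the $\theta$-local subtleties do not arise over a discretely valued base is also consistent with the paper's setup.
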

\begin{proof}
This follows by combining 
Theorem~\ref{T:integral ordinary phi-Gamma finiteness}
(resp.\ Theorem~\ref{T:ordinary phi-Gamma finiteness}), \cite[Theorem 5.7.10]{part2}(resp.\ \cite[Theorem 5.7.11]{part2}), and 
Theorem~\ref{T:direct image}.
\end{proof}

By further specializing to \'etale $(\varphi, \Gamma)$-modules, we obtain the following corollary, which includes Theorem~\ref{T:finiteness1}(b).

\begin{theorem} \label{T:main finiteness etale}
Assume that $K$ is a finite extension of $\Qp$.
Let $X$ be a smooth proper rigid analytic variety over $K$.
\begin{enumerate}
\item[(a)]
Let $T$ be an \'etale $\Zp$-local system on $X$,
or more generally a locally finite $\underline{\Zp}$-module on $X_{\proet}$. Then the pro-\'etale cohomology groups
$H^i(X_{\proet}, T)$ are finite $\Zp$-modules.
\item[(b)]
Let $V$ be an \'etale $\Qp$-local system on $X$. Then the pro-\'etale cohomology groups
$H^i(X_{\proet}, V)$ are finite-dimensional $\Qp$-vector spaces.
\end{enumerate}
Moreover, in both cases, these groups vanish for $i > 2n+2$.
\end{theorem}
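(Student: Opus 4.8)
The plan is to deduce Theorem~\ref{T:main finiteness etale} from Theorem~\ref{T:main finiteness} by translating \'etale local systems into \'etale coherent $(\varphi, \Gamma)$-modules via the Artin--Schreier correspondences. For part (a), given a locally finite $\underline{\Zp}$-module $T$ on $X_{\proet}$, Theorem~\ref{T:Artin-Schreier} produces a pseudocoherent (indeed \'etale) $(\varphi, \Gamma)$-module $\calF = T \otimes_{\underline{\Zp}} \tilde{\bA}_X$ over $\tilde{\bA}_X$, and furthermore identifies the pro-\'etale cohomology of $T$ with the $(\varphi, \Gamma)$-hypercohomology of $\calF$. Since $X$ is smooth and proper, $\calF$ is in particular a coherent $(\varphi, \Gamma)$-module over $\tilde{\bA}_X$ in the sense of Remark~\ref{R:coherent type A}, so Theorem~\ref{T:main finiteness} (type $\bA$ case) immediately gives that $H^i(X_{\proet}, T) = H^i_{\varphi,\Gamma}(\calF)$ is a finite $\Zp$-module for all $i$ and vanishes for $i > 2n + 2$.

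For part (b), the argument is parallel but uses the rational correspondence. Given an \'etale $\Qp$-local system $V$ on $X$, Theorem~\ref{T:Artin-Schreier rational} produces a $(\varphi, \Gamma)$-module $\calF = V \otimes_{\underline{\Qp}} \tilde{\bC}_X$ over $\tilde{\bC}_X$ whose $(\varphi, \Gamma)$-hypercohomology computes $H^i(X_{\proet}, V)$. Again, since $X$ is a smooth proper rigid variety over a field, $\calF$ is a coherent $(\varphi, \Gamma)$-module over $\tilde{\bC}_X$ in the sense of Remark~\ref{R:coherent type C}, so Theorem~\ref{T:main finiteness} (type $\bC$ case) gives that $H^i(X_{\proet}, V)$ is a finite-dimensional $\Qp$-vector space and vanishes for $i > 2n + 2$. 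The stated vanishing range in both cases is exactly the one furnished by Theorem~\ref{T:main finiteness}, so no additional work is needed there.

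The only genuinely substantive point to verify is that the cohomology groups appearing here are the \emph{continuous} pro-\'etale cohomology groups referred to in Theorem~\ref{T:finiteness1}, and that these agree with the cohomology on the small \'etale site for the \'etale local systems in question (so that part (b) of Theorem~\ref{T:finiteness1} really follows); this is a standard comparison, already built into the formalism of \cite{part1, part2}, so it does not require a separate argument here. I expect the main (mild) obstacle to be purely bookkeeping: confirming that the notion of ``\'etale'' $(\varphi, \Gamma)$-module coming out of Theorems~\ref{T:Artin-Schreier} and \ref{T:Artin-Schreier rational} is subsumed under ``coherent $(\varphi, \Gamma)$-module'' as used in Theorem~\ref{T:main finiteness}, which it is by the finite projectivity assertions in those theorems together with Remarks~\ref{R:coherent type A} and~\ref{R:coherent type C}. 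Given that, the proof is a direct citation.

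\begin{proof}
Part (a) follows by combining Theorem~\ref{T:Artin-Schreier}, which identifies $H^i(X_{\proet}, T)$ with $H^i_{\varphi,\Gamma}(\calF)$ for the associated coherent $(\varphi, \Gamma)$-module $\calF = T \otimes_{\underline{\Zp}} \tilde{\bA}_X$ over $\tilde{\bA}_X$, with the type $\bA$ case of Theorem~\ref{T:main finiteness}. Part (b) follows similarly by combining Theorem~\ref{T:Artin-Schreier rational}, which identifies $H^i(X_{\proet}, V)$ with $H^i_{\varphi,\Gamma}(\calF)$ for the associated coherent $(\varphi, \Gamma)$-module $\calF = V \otimes_{\underline{\Qp}} \tilde{\bC}_X$ over $\tilde{\bC}_X$, with the type $\bC$ case of Theorem~\ref{T:main finiteness}. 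In both cases the vanishing for $i > 2n + 2$ is immediate from Theorem~\ref{T:main finiteness}.
\end{proof}
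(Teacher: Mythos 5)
Your proposal is correct and follows essentially the same route as the paper: the paper's proof is the one-line combination of Theorem~\ref{T:main finiteness} with the Artin--Schreier correspondence (cited there as \cite[Theorem~9.4.5]{part1}, which is the content of Theorems~\ref{T:Artin-Schreier} and~\ref{T:Artin-Schreier rational} that you invoke). The bookkeeping you flag (pseudocoherent/projective implies coherent via Remarks~\ref{R:coherent type A} and~\ref{R:coherent type C}) is indeed all that is needed.
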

\begin{proof}
Both parts follow from combining Theorem~\ref{T:main finiteness} with
\cite[Theorem~9.4.5]{part1}.
\end{proof}

\begin{remark}
Theorem~\ref{T:main finiteness etale}(a) may also be deduced by reducing to the corresponding statement for an $\Fp$-local system, which is a finiteness theorem of
Scholze \cite[Theorem~5.1]{scholze2}. 
One can continue in this fashion to establish Theorem~\ref{T:main finiteness etale}(b) for isogeny $\Zp$-local systems, i.e., \'etale $\Qp$-local systems admitting a stable lattice.
However, it is unclear how to extend the methods used to prove \cite[Theorem~5.1]{scholze2} to establish Theorem~\ref{T:main finiteness etale}(b) for general $\Qp$-local systems, or for more general $(\varphi, \Gamma)$-modules.
\end{remark}

\section{Base change revisited}
\label{sec:base change2}

In this section, we establish the following general base change theorem. The statement
can be formally promoted to a derived version; see Remark~\ref{R:complexes}.
\begin{theorem} \label{T:general base change}
With notation as in Theorem~\ref{T:direct image}, let $g: X' \to X$ be any morphism of rigid spaces over $K$. Put $Y' = Y \times_X X'$ and let $f': Y' \to X'$, $g': Y' \to Y$ be the base extensions of $f,g$. Then the natural map
\begin{equation} \label{eq:base change general}
\LL g_{\proet}^* \RR f_{\proet *} \calF \to \RR f'_{\proet *} \LL g_{\proet}^{\prime *} \calF
\end{equation}
is an isomorphism.
\end{theorem}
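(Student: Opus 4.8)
The plan is to localize so that both sides of \eqref{eq:base change general} are computed by the explicit Čech totalizations of Section~\ref{subsec:pro-etale finiteness}, to observe that those totalizations base change geometrically along $g$, and to conclude that both sides are the derived completed base change of one and the same complex. First I would reduce to the affinoid case: the claim that \eqref{eq:base change general} is an isomorphism is local on $X$ and on $X'$, and the functors $\LL g_{\proet}^*$, $\RR f_{\proet *}$, $\LL g'^*_{\proet}$, $\RR f'_{\proet *}$ commute with the relevant restrictions, so we may take $X = \Spa(A,A^+)$ and $X' = \Spa(A',A'^+)$ affinoid and put $f : Y \to X$ in the situation of Definition~\ref{D:finiteness proper covering}. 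By Theorem~\ref{T:direct image} and Theorem~\ref{T:pseudocoherent noetherian}(d) both sides of \eqref{eq:base change general} have coherent cohomology sheaves, so by the acyclicity of pseudocoherent sheaves on affinoid perfectoid spaces \cite[Corollary~3.5.6]{part2} (and its type $\bC$ analogue \cite[Theorem~4.3.3]{part2}), it suffices to check that \eqref{eq:base change general} induces an isomorphism after applying $\RR\Gamma(\tilde{X}', -)$ for $\tilde{X}'$ ranging over a basis of $X'_{\proet}$. Such a basis is given by the perfectoid subdomains $\tilde{X}'$ of $X'_{\proet}$ whose structure morphism to $X$ factors through a perfectoid subdomain $\tilde{X}$ of $X_{\proet}$ (intersect an arbitrary perfectoid subdomain of $X'$ with the preimages of a perfectoid cover of $X$); fix such a pair $\tilde{X}' \to \tilde{X}$.

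Next I would observe that the geometric data attached to $f/\tilde{X}$ in Section~\ref{subsec:pro-etale finiteness} --- the coverings $\{V_{1,i}\}$, $\{V_{2,i}\}$ of $Y$, the finite \'etale restricted toric towers $\psi_{*,i}$, the perfectoid subdomains arising as their fibre products, and the associated period rings --- pulls back along $\tilde{X}' \to \tilde{X}$ to the analogous data for $f' : Y' \to X'$, with $Y' = Y \times_X X'$. Indeed, smooth proper morphisms and their relative dimension are stable under base change; inner affinoid localizations pull back to inner affinoid localizations (Lemma~\ref{L:finite etale inner}, Remark~\ref{R:inner morphism}(c)); finite \'etale morphisms and restricted toric towers pull back; perfectoidness is preserved under completed base change, so the (completed) fibre products of the relevant perfectoid subdomains over $\tilde{X}'$ are again perfectoid; and the period sheaves $\tilde{\bA}$, $\tilde{\bC}^{[s,r]}$ are compatible with fibre products of affinoid perfectoids, giving $\tilde{\bA}_{\tilde{W} \times_{\tilde{X}} \tilde{X}'} \cong \tilde{\bA}_{\tilde{W}} \widehat{\otimes}_{\tilde{\bA}_{\tilde{X}}} \tilde{\bA}_{\tilde{X}'}$ (resp.\ for $\tilde{\bC}^{[s,r]}$) for each perfectoid subdomain $\tilde{W}$ of $Y$ over $\tilde{X}$.

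Then I would identify both sides. Let $\tilde{M}^\bullet$ be the totalized \v{C}ech complex of Definition~\ref{D:compute complex} computing $\RR\Gamma(\tilde{X}, \RR f_{\proet *}\calF)$ as in the proof of Proposition~\ref{P:direct image finiteness}. On the one hand, since $\RR f_{\proet *}\calF$ has coherent, hence $\tilde{X}'$- and $\tilde{X}$-acyclic, cohomology, and since $\LL g_{\proet}^*$ of a complex with coherent cohomology is computed on the affinoid perfectoid $\tilde{X}'$ by the completed derived base change, the left-hand side gives $\RR\Gamma(\tilde{X}', \LL g_{\proet}^* \RR f_{\proet *}\calF) \cong \tilde{M}^\bullet \widehat{\otimes}^{\LL}_{\tilde{\bA}_{\tilde{X}}} \tilde{\bA}_{\tilde{X}'}$ (resp.\ over $\tilde{\bC}^{[s,r]}_{\tilde{X}}$). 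On the other hand, $\RR\Gamma(\tilde{X}', \RR f'_{\proet *}\LL g'^*_{\proet}\calF) = \RR\Gamma((Y \times_X \tilde{X}')_{\proet}, \LL g'^*_{\proet}\calF)$, which by the previous paragraph is computed by the same totalized \v{C}ech construction but with each section module $\calF(\tilde{W})$ replaced by $\LL g'^*_{\proet}\calF$ over $\tilde{W} \times_{\tilde{X}} \tilde{X}'$, namely $\calF(\tilde{W}) \widehat{\otimes}^{\LL}_{\tilde{\bA}_{\tilde{W}}} \tilde{\bA}_{\tilde{W} \times_{\tilde{X}} \tilde{X}'} \cong \calF(\tilde{W}) \widehat{\otimes}^{\LL}_{\tilde{\bA}_{\tilde{X}}} \tilde{\bA}_{\tilde{X}'}$ (associativity of the derived base change, using that $\tilde{\bA}_{\tilde{W}}$ is locally a topological direct sum of finite projective $\tilde{\bA}_{\tilde{X}}$-modules so that the completed tensor computing $\tilde{\bA}_{\tilde{W}\times_{\tilde{X}}\tilde{X}'}$ is already derived). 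Since the totalization is assembled from finitely many mapping cones, it commutes with $\widehat{\otimes}^{\LL}_{\tilde{\bA}_{\tilde{X}}} \tilde{\bA}_{\tilde{X}'}$, so the right-hand side is again $\tilde{M}^\bullet \widehat{\otimes}^{\LL}_{\tilde{\bA}_{\tilde{X}}} \tilde{\bA}_{\tilde{X}'}$, and \eqref{eq:base change general} is the natural identification of these two descriptions.

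The hard part will be the bookkeeping of \emph{completed} derived tensor products against the \emph{sheafy} derived functors and against the totalization/\v{C}ech operations --- that is, making precise that $\LL g_{\proet}^*$, restricted to a basis of perfectoid subdomains, computes the completed derived base change on sections, and that this operation commutes with the finitely many cones forming the totalization. This is the same circle of issues as in Lemma~\ref{L:completion preserves cohomology2} and Corollary~\ref{C:completion preserves cohomology}, and I would expect to discharge it using the stack properties of pseudocoherent $\tilde{\bA}$- and $\tilde{\bC}^{[s,r]}$-modules over period rings \cite[Theorem~4.3.3]{part2} together with a neighborhood-basis reduction: factoring $g$ locally as a closed immersion into a relative polydisc followed by the projection, one is reduced to the model cases of a pro-finite \'etale cover --- where $\tilde{\bA}_{\tilde{X}'}$ is a topological direct sum of finite projective $\tilde{\bA}_{\tilde{X}}$-modules \cite[Remark~4.2.4]{part2}, so completed and ordinary tensor products coincide --- a Laurent localization --- handled by the finite free resolution $0 \to \tilde{\bA}_{\tilde{X}}\{T\} \xrightarrow{1-[\overline{f}]T} \tilde{\bA}_{\tilde{X}}\{T\} \to \tilde{\bA}_{\tilde{X}'} \to 0$ of Lemma~\ref{L:completion preserves cohomology2} --- and a closed immersion or relative-disc projection --- handled by a Koszul-type finite projective resolution; in each model case the comparison collapses either to Proposition~\ref{P:base change} or to an elementary $\Tor$ computation.
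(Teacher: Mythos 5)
Your reduction to checking the map on sections over perfectoid subdomains $\tilde{X}' \to \tilde{X}$ is the same as the paper's (Remark~\ref{R:general base change injective}), but from there your argument has a genuine gap: you assert that the left-hand side of \eqref{eq:base change general}, evaluated on $\tilde{X}'$, is the \emph{completed} derived base change $\tilde{M}^\bullet \widehat{\otimes}^{\LL}_{\tilde{\bA}_{\tilde{X}}} \tilde{\bA}_{\tilde{X}'}$. It is not: by the stack property of pseudocoherent modules, $\LL g^*_{\proet}$ on sections is the \emph{algebraic} derived tensor product (so its cohomology is built from the pseudocoherent groups $h^i(\tilde{M}^\bullet) \otimes_{\tilde{\bA}_{\tilde{X}}} \tilde{\bA}_{\tilde{X}'}$ and the associated higher $\Tor$'s), whereas the right-hand side is computed by the term-wise completed complex $\tilde{M}^{\prime\bullet}$. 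The assertion that these two agree \emph{is} the theorem; declaring both sides to be ``the derived completed base change of the same complex'' assumes the conclusion. Your proposed discharge of this via factoring $g$ into model cases does not close the gap: Proposition~\ref{P:base change} only covers pro-\'etale maps $\tilde{X}' \to \tilde{X}$ (where pseudoflatness, Remark~\ref{R:pseudoflat}, is available); for the relative-polydisc projection, $\tilde{\bA}_{\tilde{X}'}$ is of infinite rank over $\tilde{\bA}_{\tilde{X}}$ and has no finite projective resolution, so commuting $\widehat{\otimes}$ past cohomology requires redoing the strictness/snake-lemma analysis of Lemma~\ref{L:completion preserves cohomology2} from scratch; and for a closed immersion the pullback is genuinely non-flat, so the comparison involves matching whole hyper-$\Tor$ spectral sequences, not ``an elementary $\Tor$ computation.''

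The paper's route is quite different and avoids this analysis entirely. It forms the cone $C = \Cone(\tilde{M}^{\star\bullet} \to \tilde{M}^{\prime\bullet})$ and observes, using the finiteness already proved (Proposition~\ref{P:direct image finiteness}, Remark~\ref{R:coherent type A}, Theorem~\ref{T:pseudocoherent noetherian}), that the cohomology of $C$ consists of coherent $(\varphi,\Gamma)$-modules; a nonzero such object is detected by its reduction modulo $p$ (type $\bA$) or modulo an irreducible $t$ (type $\bC$) at a single point. These residual cases are then handled by two special arguments: Lemma~\ref{L:base change point1} passes to the v-topology, where the relevant sheaves are finite projective and base change is automatic because \emph{every} morphism of perfectoid spaces is part of a v-covering; and Lemma~\ref{L:base change point2} treats $t_\theta$-torsion at a point directly via surjectivity of $\widehat{\calO}_X \to g_{\proet *}\widehat{\calO}_{X'}$. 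If you want to salvage your approach, you would essentially need to reprove these dévissage steps anyway; the structure theory of coherent $(\varphi,\Gamma)$-modules is doing real work here that a purely Čech-theoretic bookkeeping cannot replace.
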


\begin{remark} \label{R:general base change injective}
In the setting of Theorem~\ref{T:general base change},
let $\tilde{X}$ be a perfectoid subdomain of $X$, and
let $\tilde{M}^\bullet$ be a bounded complex computing $H^i(\tilde{X}_{\proet}, \calF)$.
Let $\tilde{X}'$ be a perfectoid subdomain of $X'$ mapping to $\tilde{X}$.
As in Remark~\ref{R:completion preserves cohomology}, define
the second complex 
\[
\tilde{M}^{\star \bullet} = \tilde{M}^\bullet \otimes_{\tilde{\bA}_{\tilde{X}}} \tilde{\bA}_{\tilde{X}'} 
\qquad
(\mbox{resp. }  \tilde{M}^{\star \bullet} = \tilde{M}^\bullet \otimes_{\tilde{\bC}^{[s,r]}_{\tilde{X}}} \tilde{\bC}^{[s,r]}_{\tilde{X}'})
\]
and the third complex
\[
\tilde{M}^{\prime \bullet} := \tilde{M}^\bullet \widehat{\otimes}_{\tilde{\bA}_{\tilde{X}}} \tilde{\bA}_{\tilde{X}'}
\qquad
(\mbox{resp. } \tilde{M}^{\prime \bullet} = \tilde{M}^\bullet \widehat{\otimes}_{\tilde{\bC}^{[s,r]}_{\tilde{X}}} \tilde{\bC}^{[s,r]}_{\tilde{X}'});
\]
then $H^i(\tilde{X}'_{\proet}, \LL g^{\prime *}_{\proet} \calF) \cong h^i(\tilde{M}^{\prime \bullet})$, and the claim is that the map $\tilde{M}^{\star \bullet} \to \tilde{M}^{\prime \bullet}$ is a quasi-isomorphism. 
\end{remark}

The following argument is inspired by \cite[Corollary~5.12]{scholze2},
specifically in its use of \cite[Proposition~2.6.1]{huber}; however, we need to recast the latter in terms of a suitable topology.
\begin{lemma} \label{L:base change point1}
In Theorem~\ref{T:general base change}, suppose that $\calF$ is killed by $p$ (resp.\ by some $t \in \tilde{\bC}^{[s,r]}_K$ coprime to $t_\theta$).
Then the map \eqref{eq:base change general} is an isomorphism.
\end{lemma}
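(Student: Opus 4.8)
The plan is to adapt the strategy of \cite[Corollary~5.12]{scholze2}: in this torsion case the relevant reduced period sheaf becomes the completed structure sheaf of a perfectoid (un)tilt, and the base change assertion reduces to proper base change for coherent sheaves on perfectoid spaces, which in turn rests on Huber's proper base change theorem for torsion \'etale coefficients. I would treat the type $\bA$ and type $\bC$ cases in parallel. First, by Remark~\ref{R:general base change injective}, and since the conclusion of Theorem~\ref{T:general base change} is local on $X'_{\proet}$, it suffices to fix a perfectoid subdomain $\tilde X$ of $X_{\proet}$ and a perfectoid subdomain $\tilde X'$ of $X'$ mapping to $\tilde X$ (such $\tilde X'$ form a cofinal system on $X'_{\proet}$ after refining so that they lie over perfectoid subdomains of $X$), and to show that $\tilde M^{\star\bullet}\to\tilde M^{\prime\bullet}$ is a quasi-isomorphism, where $\tilde M^\bullet$ is the bounded complex computing $H^\bullet(\tilde X_{\proet},\calF)$ on $Y\times_X\tilde X$ produced in the proof of Proposition~\ref{P:direct image finiteness}.

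Next I would identify the reduced period sheaves. In the type $\bA$ case $\calF$ killed by $p$ is a sheaf of $\tilde{\bA}_Y/(p)$-modules, and $\tilde{\bA}_Y/(p)$ is the tilted completed structure sheaf of $Y$, a perfect $\FF_p$-Banach sheaf; likewise on $\tilde X$ and $\tilde X'$. In the type $\bC$ case, after a reduction by the Chinese remainder theorem we may assume $t$ irreducible, and then $\tilde{\bC}^{[s,r]}_Y/(t)$ is the completed structure sheaf of a perfectoid untilt, with $\calF$ becoming a finite projective module over it by \cite[Corollary~8.8.10]{part2} (as in Remark~\ref{R:coherent type C}); here coprimality of $t$ with $t_\theta$ is exactly what makes $\tilde{\bC}^{[s,r]}_Y/(t)$ perfectoid rather than merely $\widehat\calO_Y$. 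In both cases, pro-\'etale-locally on $Y\times_X\tilde X$ we are looking at a pseudocoherent module over the completed structure sheaf of a genuine perfectoid space proper over the corresponding reduction of $\tilde X$. Resolving that module by finite free modules (possible since it is pseudocoherent) and using the acyclicity of pseudocoherent modules over $\widehat\calO$ on affinoid perfectoid spaces (\cite[Corollary~3.5.6]{part2}), the desired quasi-isomorphism $\tilde M^{\star\bullet}\to\tilde M^{\prime\bullet}$ reduces to the analogous statement for the structure sheaf itself, i.e.\ to proper base change for $\widehat\calO$ along $f$ over the relevant perfectoid (un)tilt. This last statement is Scholze's almost version of proper base change: it follows, via Faltings' almost purity, from proper base change holding almost for $\widehat\calO^{+}/(\varpi)$-coefficients, which in turn is proven by comparison with constant torsion \'etale coefficients, so one invokes Huber's proper base change theorem \cite[Proposition~2.6.1]{huber} for the proper morphism $f$.

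The main obstacle is the one flagged in the sentence preceding the statement: Huber's Proposition~2.6.1 is phrased for the \'etale site of adic spaces with torsion coefficients, whereas our sheaves live on the pro-\'etale site as modules over period sheaves, so the argument only goes through after recasting proper base change in the appropriate topology. Concretely this means transporting Huber's statement across the comparison between the \'etale and pro-\'etale topoi (legitimate for finite locally constant torsion coefficients) and carrying through the almost-mathematics bookkeeping that bridges from torsion \'etale coefficients to $\widehat\calO^{+}/(\varpi)$-coefficients, exactly as in the proof of \cite[Theorem~5.1]{scholze2}. A secondary technical point, needed to make the reduction in the previous paragraph literal rather than heuristic, is to check that the identifications of $\tilde{\bA}_Y/(p)$ and $\tilde{\bC}^{[s,r]}_Y/(t)$ with completed structure sheaves of perfectoid (un)tilts are compatible with the explicit toric-tower \v{C}ech models underlying $\tilde M^\bullet$, so that the abstract proper base change theorem indeed computes the map $\tilde M^{\star\bullet}\to\tilde M^{\prime\bullet}$.
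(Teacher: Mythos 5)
Your proposal carries out the correct first reductions (restricting to perfectoid subdomains, identifying $\tilde{\bA}_Y/(p)$ and $\tilde{\bC}^{[s,r]}_Y/(t)$ with completed structure sheaves of perfectoid (un)tilts, and exploiting finite projectivity of $\calF$ in this torsion case), but the core of your argument --- reducing to Huber's proper base change for torsion \'etale coefficients via almost mathematics and the primitive comparison theorem --- is precisely the step that does not go through, and the paper's proof is designed to avoid it. Two concrete problems. First, the coefficients here are finite projective modules over $\tilde{\bA}_Y/(p)$ (resp.\ $\tilde{\bC}^{[s,r]}_Y/(t)$), which live only on the pro-\'etale site and do not descend to torsion \'etale sheaves in the sense required by \cite[Proposition~2.6.1]{huber}; in the type $\bC$ case a $(\varphi,\Gamma)$-module killed by a $t$ coprime to $t_\theta$ need not arise from any local system at all, so there is no ``constant torsion \'etale coefficient'' to compare with. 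Second, the bridge from torsion \'etale coefficients to $\widehat{\calO}^+/\varpi$-coefficients (\cite[Theorem~5.1]{scholze2}) is a statement over a geometric point; what you need is its relative form along an arbitrary $g\colon X'\to X$, which is essentially equivalent to the base change statement being proved (and to Theorems~\ref{T:etale direct images type A} and~\ref{T:etale direct images type C}, which are deduced \emph{from} Theorem~\ref{T:general base change} later in the paper). Your reduction is therefore circular, or at best defers the entire difficulty to the clause ``after recasting proper base change in the appropriate topology'' without supplying the recasting.

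That recasting is the missing idea, and it is the actual content of the paper's proof: one introduces the v-topology on perfectoid spaces, in which every morphism is part of a covering. Since in this torsion case $\calF$ and, by Proposition~\ref{P:direct image finiteness}, all $R^i f_{\proet *}\calF$ are finite projective over the reduced period sheaves (Remarks~\ref{R:coherent type A} and~\ref{R:coherent type C}), and finite projective modules are v-acyclic, form a stack for the v-topology, and have vanishing higher pushforward to the analytic site (\cite[Theorem~3.5.5, Theorem~3.5.8, Corollary~3.5.9]{part2}, transported through the perfectoid correspondence), both sides of \eqref{eq:base change general} may be computed in the v-topology, where the comparison becomes tautological. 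No proper base change theorem for torsion \'etale sheaves and no almost mathematics are invoked. To repair your write-up, replace the appeal to \cite[Proposition~2.6.1]{huber} and the primitive comparison theorem with this v-descent argument.
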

\begin{proof}
As in \cite[\S 3.5]{part2},
define the \emph{v-topology} on the category of perfectoid spaces in which a covering is simply a surjective morphism for which the inverse image of each quasicompact open subset is contained in a quasicompact open subsets (by analogy with the h-topology for schemes).
This topology has the following properties (for $\tilde{X}$ a perfectoid space).
\begin{itemize}
\item
Any finite projective $\calO_{\tilde{X}}$-module is acyclic for the $v$-topology
\cite[Theorem~3.5.5]{part2}.
\item
Let $\nu$ be the morphism from the v-topology of $\tilde{X}$ to the analytic site. Then
pullback via $\nu$ defines an equivalence of categories between the categories of vector bundles for the analytic topology and the v-topology \cite[Theorem~3.5.8]{part2}.
\item
For $\calF$ a finite projective $\calO_{\tilde{X}}$-module for the v-topology,
we have $R^i \nu_* \calF = 0$ for $i>0$ \cite[Corollary~3.5.9]{part2}. In particular,
for $\calF$ a finite projective $\calO_{\tilde{X}}$-module for the analytic topology,
the map $\calF \to \RR \nu_* (\nu^* \calF)$ is an isomorphism.
\end{itemize}
Using the perfectoid correspondence, we deduce the corresponding assertions with $\calO$ replaced by $\tilde{\bA}/(p)$ or $\tilde{\bC}^{[s,r]}/(t)$.

By Remark~\ref{R:coherent type A} (resp.\ Remark~\ref{R:coherent type C}),
$\calF$ is finite projective over $\tilde{\bA}_Y/(p)$
(resp.\ over $\tilde{\bC}^{[s,r]}_Y/(t)$). 
Similarly, by Proposition~\ref{P:direct image finiteness},
the sheaves $R^i f_{\proet *} \calF$ are finite
projective over $\tilde{\bA}_X/(p)$ (resp.\ over $\tilde{\bC}^{[s,r]}_X/(t)$).
In particular, the ordinary and derived pullbacks of these sheaves along any morphism of perfectoid spaces coincide.

In light of the previous discussion, we may argue as follows.
Let $\tilde{X}$ be a perfectoid subdomain of $X$, let $\tilde{Y}$ be a perfectoid subdomain of $Y$ lying over $\tilde{X}$, let $\tilde{X}'$ be a perfectoid subdomain of $X'$ lying over 
$\tilde{X} \times_X X'$ (which may not itself be perfectoid), and let $\tilde{Y}'$ be a perfectoid subdomain of $Y'$ lying over $\tilde{X}' \times_X Y$. 
If we restrict the morphisms $f,g, f', g'$ to these perfectoid subdomains,
then the morphism in \eqref{eq:base change general} is naturally isomorphic to the same morphism computed using the v-topology instead of the pro-\'etale topology,
and the latter is an isomorphism because \emph{every} morphism of perfectoid spaces is part of a v-covering. This proves the claim.
\end{proof}

\begin{lemma} \label{L:base change point2}
In Theorem~\ref{T:general base change}, suppose that $X'$ is a point and that $\calF$ is
killed by $t_\theta$.
Then the map \eqref{eq:base change general} is an isomorphism.
\end{lemma}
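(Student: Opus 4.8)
The plan is to reduce the claim to Grothendieck's ``cohomology and base change'' theorem for the proper morphism $f$, using that a $t_\theta$-torsion coherent $(\varphi,\Gamma)$-module carries essentially the same information as a coherent sheaf on the rigid space $Y$ itself. First I would normalize $\calF$. By Remark~\ref{R:type C interval} and Remark~\ref{R:coherent type C}, the hypothesis that $\calF$ is killed by $t_\theta$ says that its underlying $\tilde{\bC}^{[s,r]}_Y$-module is pseudocoherent over the quotient $\tilde{\bC}^{[s,r]}_Y/(t_\theta)$ and has Fitting ideals descending to $\calO_Y$. Arguing by Noetherian induction on $\operatorname{Supp}\calF\subseteq Y$ together with resolution of singularities, exactly as in steps (i) and (iii) of the proof of Proposition~\ref{P:direct image finiteness}, and using that both sides of \eqref{eq:base change general} carry a short exact sequence of such $\calF$ to an exact triangle (so that the dévissage is legitimate), one reduces to the case in which $\calF$ is obtained by scalar extension from a coherent $\calO_Y$-module $\calG$.

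Next I would identify $\RR f_{\proet *}\calF$ with coherent cohomology on $Y$. Factoring $f_{\proet}$ through $\nu_Y\colon Y_{\proet}\to Y_{\et}$ and invoking the (relative) primitive comparison theorem of \cite{scholze2, part2}, $\RR\nu_{Y*}(\calG\otimes_{\calO_Y}\widehat{\calO}_Y)$ is, up to a Tate twist pulled back from the base, a bounded complex of coherent $\calO_Y$-modules built from $\calG$ and the sheaves of relative differentials; hence $\RR f_{\proet *}\calF$ is, up to such a twist, the derived pushforward along $f_{\et}$ of a bounded complex of coherent $\calO_Y$-modules. Since the Tate twist is a rank-one lisse sheaf on $X$ whose pullback is again lisse, its presence does not affect base change, so it suffices to prove that \eqref{eq:base change general} is an isomorphism after replacing $\calF$ by (the $\widehat{\calO}_Y$-extension of) a coherent $\calO_Y$-module; by a further resolution of that sheaf by finite locally free $\calO_Y$-modules — where a finite truncation already suffices, because $\RR f_{\proet *}$ is cohomologically bounded by Theorem~\ref{T:direct image} — one may take it to be $f$-flat.

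Finally, with $\calF$ reduced to the pullback of an $f$-flat coherent $\calO_Y$-module, the assertion that \eqref{eq:base change general} is an isomorphism is precisely Grothendieck's cohomology-and-base-change theorem for the proper morphism $f$ along $g\colon X'=\Spa(L,L^+)\to X$: for an $f$-flat coherent sheaf, $\RR f_{\et *}$ is represented locally on $X$ by a bounded complex of finite locally free $\calO_X$-modules whose formation commutes with arbitrary base change, so \eqref{eq:base change general} is computed by termwise pullback and is an isomorphism. This is available in the rigid-analytic category through Kiehl's coherence theorem \cite{kiehl-finiteness} together with the Cartan--Serre apparatus of \S\ref{subsec:finiteness}, which is exactly designed to produce such a universally-computing complex; the hypothesis that $X'$ is a point is what makes this last step unconditional, as it amounts to base change to a fibre. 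I expect the main obstacle to be the first step — cleanly extracting the coherent $\calO_Y$-linear datum from an abstract $t_\theta$-torsion coherent $(\varphi,\Gamma)$-module and verifying that the resulting identification $\RR f_{\proet *}\calF\simeq\RR f_{\et *}(-)$ is natural in the base, so that every stage of the dévissage stays compatible with \eqref{eq:base change general}.
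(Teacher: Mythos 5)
There is a genuine gap, and the proposed route also misses the (very short) argument that actually makes the point case work.

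The decisive issue is your first reduction. You claim that by d\'evissage (noetherian induction on the support, resolution of singularities) one reduces to the case where $\calF$ is the scalar extension of a coherent $\calO_Y$-module. The paper's Remark~\ref{R:coherent type C} only says that pullbacks of coherent $\calO_X$-modules \emph{occur} among the $t_\theta$-torsion objects; it does not say, and the paper nowhere proves, that every $t_\theta$-torsion coherent $(\varphi,\Gamma)$-module is an iterated extension of such pullbacks. What descends to $\calO_Y$ is only the Fitting ideals, which suffices for the noetherian induction in Proposition~\ref{P:direct image finiteness} (a finiteness statement) but not for the structural descent you need. Without that reduction, the rest of your argument (primitive comparison, resolution by locally free $\calO_Y$-modules, Grothendieck cohomology-and-base-change) has nothing to apply to. Even granting the reduction, the identification of $\RR f_{\proet *}(\calG\otimes_{\calO_Y}\widehat{\calO}_Y)$ with a coherent derived pushforward ``up to Tate twist'' is not available in the generality you invoke: the computation of $\RR\nu_*\widehat{\calO}$ in terms of differentials is a Hodge--Tate--type statement requiring smoothness/properness and base-field hypotheses, it yields a filtration rather than a direct sum in general, and transporting the base-change problem across it would itself require the compatibility you are trying to prove.

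The paper's proof is instead three lines and uses the point hypothesis in an essential, elementary way that your proposal does not exploit. By Remark~\ref{R:general base change injective} (via pseudoflatness, Remark~\ref{R:pseudoflat}), $\LL g^*_{\proet}\RR f_{\proet *}\calF$ is computed by the \emph{uncompleted} base extension $\tilde{M}^{\star\bullet}$ of a computing complex, and $\RR f'_{\proet *}\LL g^{\prime *}_{\proet}\calF$ by the \emph{completed} base extension $\tilde{M}^{\prime\bullet}$; the whole content is that $\tilde{M}^{\star\bullet}\to\tilde{M}^{\prime\bullet}$ is a quasi-isomorphism. Since $\calF$ is killed by $t_\theta$, it is a sheaf of $\widehat{\calO}_Y$-modules; after reducing (by invariance under extension of $K$) to a $K$-rational point, the restriction maps $\widehat{\calO}_X\to g_{\proet *}\widehat{\calO}_{X'}$ and $\widehat{\calO}_Y\to g'_{\proet *}\widehat{\calO}_{Y'}$ are \emph{surjective}, so the uncompleted tensor product is already a quotient (hence complete by the open mapping theorem) and each individual map $\tilde{M}^{\star i}\to\tilde{M}^{\prime i}$ is an isomorphism. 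No comparison with coherent cohomology of $Y$ is needed. I would encourage you to notice that ``completed versus uncompleted base change'' is the actual obstruction here, and that surjectivity of evaluation at a point kills it outright.
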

\begin{proof}
Since the formation of both sides commutes with extension of $K$, we may reduce to the case where $X'$ is a $K$-rational point. 
In the notation of Remark~\ref{R:general base change injective},
we must check that the morphism from the uncompleted base extension
$\tilde{M}^{* \bullet}$ to the completed base extension $\tilde{M}^{\prime \bullet}$ is a quasi-isomorphism. However, in this case $\calF$ is a coherent $\widehat{\calO}_Y$-module and the morphisms 
\[
\widehat{\calO}_X \to g_{\proet *} \widehat{\calO}_{X'},
\qquad
\widehat{\calO}_Y \to g'_{\proet *} \widehat{\calO}_{Y'}
\]
are surjective, so in fact each individual morphism $\tilde{M}^{* i} \to \tilde{M}^{\prime i}$ is an isomorphism.
\end{proof}

The following argument emulates \cite[Lemma~4.1.5]{kpx}
\begin{proof}[Proof of Theorem~\ref{T:general base change}]
We wish to check that that
\eqref{eq:base change general} induces isomorphisms of cohomology groups,
or equivalently that the associated mapping cone $C = \Cone(\tilde{M}^* \to \tilde{M}')$
is acyclic. We know that the cohomology groups of $\tilde{M}'$ are pseudocoherent,
and that the cohomology groups of $\tilde{M}^*$ are the base extensions of pseudocoherent modules. Using Remark~\ref{R:coherent type A} in the type $\bA$ case, or
Theorem~\ref{T:pseudocoherent noetherian} in the type $\bC$ case, it follows that the cohomology groups of $C$ are coherent $(\varphi, \Gamma)$-modules. If one of them were nonzero, then this could be detected from its reduction modulo $p$ (in the type $\bA$ case)
or some irreducible $t \in \tilde{\bC}^{[s,r]}_K$ (in the type $\bC$ case) evaluated at a suitable point; however, this is ruled out by 
Lemma~\ref{L:base change point1} and Lemma~\ref{L:base change point2}.
\end{proof}

We conclude this discussion with a formal observation.
\begin{remark} \label{R:complexes}
Since coherent $(\varphi, \Gamma)$-modules form an abelian category and are preserved under derived pullback along an arbitrary morphism
(by Remark~\ref{R:coherent type A} in the type $\bA$ case and
Theorem~\ref{T:pseudocoherent noetherian} in the type $\bC$ case)
and derived pushforward along a smooth proper morphism (by Theorem~\ref{T:direct image}),
Theorem~\ref{T:general base change} as stated
immediately implies the corresponding statement with the sheaf $\calF$ replaced by a bounded complex of coherent $(\varphi, \Gamma)$-modules,
or more generally a perfect complex of sheaves of $\tilde{\bA}$-modules
(resp.\ $\tilde{\bC}^{[s,r]}$-modules) whose cohomology groups are coherent
$(\varphi, \Gamma)$-modules. Note that in the latter, it is not necessary to equip the complex itself with an action of $\varphi$. Similar observations apply to our other  results.
\end{remark}

\section{Projectivity of higher direct images}

In Theorem~\ref{T:direct image}, if $\calF$ is a projective $(\varphi, \Gamma)$-module,
it is not apparent under what conditions its higher direct images will themselves be projective, rather than merely coherent.
We establish some sufficient conditions for this, starting with the type $\bA$ case.

\begin{theorem} \label{T:etale direct images type A}
With notation as in Theorem~\ref{T:direct image}, suppose that 
$\calF$ is the coherent $(\varphi, \Gamma)$-module
over $\tilde{\bA}_Y$
associated to the locally finite $\underline{\ZZ_p}$-module $T$ on $Y$.
Then the higher direct images of $\calF$ along $Y_{\proet} \to X_{\proet}$ are the coherent $(\varphi, \Gamma)$-modules over $\tilde{\bA}_X$ associated to the higher direct images of $T$; in particular, the higher direct images of $\calF$ are projective
if and only if the higher direct images of $T$ are $p$-torsion-free (i.e., are $\ZZ_p$-local systems).
\end{theorem}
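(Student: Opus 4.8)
The plan is to reduce everything to the Artin–Schreier description of $T$ combined with Theorem~\ref{T:direct image}. By Theorem~\ref{T:Artin-Schreier}, the sheaf $\calF = T \otimes_{\underline{\ZZ_p}} \tilde{\bA}_Y$ sits in a short exact sequence of sheaves on $Y_{\proet}$,
\[
0 \to T \to \calF \stackrel{\varphi - 1}{\longrightarrow} \calF \to 0,
\]
and the quasi-inverse of the equivalence recovers $T$ as $\calF^{\varphi}$. I would apply $\RR f_{\proet *}$ to this sequence to obtain a distinguished triangle $\RR f_{\proet *} T \to \RR f_{\proet *} \calF \stackrel{\varphi-1}{\longrightarrow} \RR f_{\proet *}\calF \to [1]$ in the bounded-below derived category of sheaves on $X_{\proet}$, where the Frobenius acting on $\RR f_{\proet *}\calF$ is the one furnished by functoriality; this agrees with the Frobenius making each $R^i f_{\proet *}\calF$ a coherent $(\varphi, \Gamma)$-module over $\tilde{\bA}_X$ (Theorem~\ref{T:direct image}), because $f_{\proet}$ is compatible with the Frobenius endomorphisms of the period sheaves.

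Next I would pass to the long exact sequence of higher direct image sheaves. The key point is that for each $i$ the module $R^i f_{\proet *}\calF$ is a coherent, in particular pseudocoherent, $(\varphi, \Gamma)$-module over $\tilde{\bA}_X$, so applying Theorem~\ref{T:Artin-Schreier} on $X$ shows that $\varphi - 1$ is an epimorphism of sheaves on $X_{\proet}$ with kernel $(R^i f_{\proet *}\calF)^{\varphi}$. Consequently the connecting maps into $R^i f_{\proet *} T$ in the long exact sequence vanish, and one is left with exact sequences of sheaves on $X_{\proet}$
\[
0 \to R^i f_{\proet *} T \to R^i f_{\proet *}\calF \stackrel{\varphi-1}{\longrightarrow} R^i f_{\proet *}\calF,
\]
whence $R^i f_{\proet *} T \cong (R^i f_{\proet *}\calF)^{\varphi}$. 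Since $\calF \mapsto \calF^{\varphi}$ is the quasi-inverse of the equivalence of Theorem~\ref{T:Artin-Schreier}, this exhibits $R^i f_{\proet *} T$ as a locally finite $\underline{\ZZ_p}$-module and gives $R^i f_{\proet *}\calF \cong (R^i f_{\proet *} T) \otimes_{\underline{\ZZ_p}} \tilde{\bA}_X$; that is, $R^i f_{\proet *}\calF$ is precisely the coherent $(\varphi, \Gamma)$-module associated to the pro-\'etale higher direct image $R^i f_{\proet *} T$, which is the first assertion.

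For the final clause I would invoke Remark~\ref{R:coherent type A}, according to which a coherent $(\varphi, \Gamma)$-module over $\tilde{\bA}_X$ is projective if and only if it is $p$-torsion-free. As $T' \mapsto T' \otimes_{\underline{\ZZ_p}} \tilde{\bA}_X$ is an equivalence of abelian categories, it is exact and commutes with multiplication by $p$, so $R^i f_{\proet *}\calF$ is $p$-torsion-free if and only if $R^i f_{\proet *} T$ is $p$-torsion-free, i.e.\ (a finite $p$-torsion-free $\ZZ_p$-module being free) if and only if $R^i f_{\proet *} T$ is a $\ZZ_p$-local system. Chaining the two equivalences yields the claim. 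The only nonformal input beyond Theorem~\ref{T:direct image} is the surjectivity of $\varphi-1$ on the sheaf $R^i f_{\proet *}\calF$ over $X_{\proet}$, used to annihilate the connecting maps; I expect this to be the point needing the most care, but it is built into Theorem~\ref{T:Artin-Schreier}, since that equivalence applies to \emph{all} pseudocoherent $(\varphi, \Gamma)$-modules and not merely to those of the form $T \otimes_{\underline{\ZZ_p}} \tilde{\bA}_Y$. A secondary bookkeeping matter is the identification of the two Frobenius structures on $R^i f_{\proet *}\calF$ mentioned above, which is routine.
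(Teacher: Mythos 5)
Your argument is correct and follows essentially the same route as the paper: both proofs apply the derived pushforward to the Artin--Schreier triangle, use the surjectivity of $\varphi-1$ as a morphism of pro-\'etale sheaves on $X_{\proet}$ to recover $R^i f_{\proet *} T$ as $(R^i f_{\proet *}\calF)^{\varphi}$, and then invoke the equivalence of Theorem~\ref{T:Artin-Schreier} in the other direction to identify $R^i f_{\proet *}\calF$ with $(R^i f_{\proet *} T)\otimes_{\underline{\ZZ_p}}\tilde{\bA}_X$, with the projectivity clause coming from Remark~\ref{R:coherent type A}. The one point you flag as needing care---surjectivity of $\varphi-1$ on the pushforward---is handled identically in the paper, which notes that $\varphi-1$ is surjective as a morphism of pro-\'etale sheaves for any $\varphi$-module over $\tilde{\bA}_X$.
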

\begin{proof}
By the second part of Theorem~\ref{T:Artin-Schreier},
we may recover the higher direct images of $T$ as the cohomology groups of the mapping cone of $\varphi-1$ on the derived pushforward of $\calF$.
Now note that for any $\varphi$-module over $\tilde{\bA}_X$, the map $\varphi-1$ is surjective as a morphism of pro-\'etale sheaves; we may thus
recover the higher direct images of $T$
from the individual higher direct images of $\calF$ by taking $\varphi$-invariants.
By the first part of Theorem~\ref{T:Artin-Schreier}, we may turn around and recover the higher direct images of $\calF$ from the higher direct images of $T$ by tensoring over $\underline{\ZZ_p}$ with $\tilde{\bA}_X$. This proves the claim.
\end{proof}

\begin{remark}
It is possible for the higher direct images of $T$ to have nontrivial $p$-torsion even if $T$ is $p$-torsion-free. See for example \cite[\S 2.2]{bms}.
\end{remark}

\begin{theorem} \label{T:main finiteness etale alg closed type A}
Assume that $K$ is algebraically closed.
Let $X$ be a smooth proper rigid analytic variety over $K$.
Let $T$ be an \'etale $\Zp$-local system on $X$. Then the pro-\'etale cohomology groups
$H^i(X_{\proet}, T)$ are finite $\Zp$-modules;
moreover, these groups vanish for $i > 2n$.
\end{theorem}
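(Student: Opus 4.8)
The plan is to deduce this as a corollary of Theorem~\ref{T:direct image}, by taking the base of the smooth proper morphism to be the geometric point $S := \Spa(K, K^\circ)$; this parallels the deduction of Theorem~\ref{T:main finiteness} from Theorem~\ref{T:direct image}, except that the arithmetic input there (Herr--Liu finiteness over $\bA_{\QQ_p}$, Theorem~\ref{T:integral ordinary phi-Gamma finiteness}) is replaced by the observation that the pro-\'etale site of $S$ is that of a geometric point.

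First I would let $f \colon X \to S$ be the structure morphism, which is smooth and proper of relative dimension $n = \dim(X)$, and let $\calF = T \otimes_{\underline{\ZZ_p}} \tilde{\bA}_X$ be the coherent $(\varphi, \Gamma)$-module over $\tilde{\bA}_X$ attached to $T$ by Theorem~\ref{T:Artin-Schreier}. By Theorem~\ref{T:direct image}, the higher direct images $R^i f_{\proet *} \calF$ are coherent $(\varphi, \Gamma)$-modules over $\tilde{\bA}_S$ which vanish for $i > 2n$; by Theorem~\ref{T:etale direct images type A}, each $R^i f_{\proet *} \calF$ is the coherent $(\varphi, \Gamma)$-module over $\tilde{\bA}_S$ attached to the locally finite $\underline{\ZZ_p}$-module $R^i f_{\proet *} T$, and since $\calF \mapsto \calF^{\varphi}$ is a one-sided inverse to $T \mapsto T \otimes_{\underline{\ZZ_p}} \tilde{\bA}$ we also get $R^i f_{\proet *} T = 0$ for $i > 2n$.

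Next I would use that $K$ is algebraically closed: then $\Spa(K, K^\circ)$ admits no nontrivial finite \'etale covers and no proper rational subdomains, so $S_{\proet}$ is the pro-\'etale site of a geometric point. Consequently all higher pro-\'etale cohomology of $S$ vanishes, and every locally finite $\underline{\ZZ_p}$-module on $S_{\proet}$ is globally a finite $\ZZ_p$-module; in particular each $R^i f_{\proet *} T$ is a finite $\ZZ_p$-module. Finally, the Leray spectral sequence $H^p(S_{\proet}, R^q f_{\proet *} T) \Rightarrow H^{p+q}(X_{\proet}, T)$ collapses, since $\Gamma(S_{\proet}, -)$ is exact, to an isomorphism $H^i(X_{\proet}, T) \cong \Gamma(S_{\proet}, R^i f_{\proet *} T)$, which identifies $H^i(X_{\proet}, T)$ with a finite $\ZZ_p$-module vanishing for $i > 2n$.

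I expect no real obstacle: the entire technical burden is carried by Theorem~\ref{T:direct image} together with Theorem~\ref{T:etale direct images type A}, both already established, so relative to those the present statement is bookkeeping. The only point that deserves to be stated with care is the identification of $S_{\proet}$ with the site of a geometric point, which is what makes the passage from ``locally finite $\underline{\ZZ_p}$-module'' to ``finite $\ZZ_p$-module'' and the collapse of the Leray spectral sequence immediate.
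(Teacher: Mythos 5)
Your proposal is correct and follows exactly the paper's own route: the paper's proof is precisely the combination of Theorem~\ref{T:direct image} applied to the structure morphism $X \to \Spa(K,K^\circ)$ with Theorem~\ref{T:etale direct images type A}, and your spelling-out of the collapse over the geometric point $\Spa(K,K^\circ)$ is the intended (and correct) bookkeeping.
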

\begin{proof}
This is immediate from 
Theorem~\ref{T:direct image} plus Theorem~\ref{T:etale direct images type A}.
\end{proof}

\begin{remark}
The analogue of Theorem~\ref{T:etale direct images type A}
in type $\bC$ is subtler because the functor from $\QQ_p$-local systems to $(\varphi, \Gamma)$-modules is only fully faithful, not essentially surjective; moreover, the surjectivity of $\varphi-1$ is true on the essential image of this functor, but not on the whole category. We thus can only say \emph{a priori} that there is a natural quasi-isomorphism
\begin{equation} \label{eq:etale direct images derived}
\RR f_* V \cong \Cone(\varphi-1, \RR f_* \calF).
\end{equation}
In particular, there is a natural morphism
\begin{equation} \label{eq:etale direct images}
(R^i f_* V) \otimes_{\underline{\QQ_p}} \tilde{\bC}^{[s,r]}_X \to R^i f_* \calF
\end{equation}
but it is not guaranteed to be either injective or surjective.
\end{remark}

To address the previous discussion, we first treat the absolute case (i.e., where the base space is a point) in detail.

\begin{lemma} \label{L:cohomology base field}
Suppose that $K$ is algebraically closed.
Let $X = \Spa(A,A^+)$ be a smooth affinoid space over $K$. Let $T$ be a locally finite $\underline{\ZZ_p}$-module on $X$. Then for all $i \geq 0$, the following statements hold.
\begin{enumerate}
\item[(a)]
The group $H^i(X_{\proet}, T)$ is a finite $\ZZ_p$-module.
\item[(b)]
 Let $K'$ be an algebraically closed nonarchimedean field containing $K$, put $X' = X \times_K K'$,
and let $T'$ be the pullback of $T$ to $X'$. Then the morphism
$H^i(X_{\proet}, T) \to H^i(X'_{\proet}, T')$ is an isomorphism.
\end{enumerate}
\end{lemma}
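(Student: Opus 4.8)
The plan is to transport the statement into the language of $(\varphi,\Gamma)$-modules and run the Cartan--Serre method of \S\ref{subsec:finiteness}, with the base $\Spa(K,K^\circ)$ playing the role of a point. Let $\calF = T \otimes_{\underline{\ZZ_p}} \tilde{\bA}_X$ be the coherent $(\varphi,\Gamma)$-module over $\tilde{\bA}_X$ attached to $T$ (Theorem~\ref{T:Artin-Schreier}), so that $H^i(X_{\proet},T)$ is the $(\varphi,\Gamma)$-hypercohomology of $\calF$. Using Lemma~\ref{L:cohomology mod p} with $R = \tilde{\bA}_X$ and $\calA$ the category of $\tilde{\bA}_X$-modules carrying commuting $\varphi^{\pm 1}$- and $\Gamma$-actions (the hypothesis on $\calA$ being ensured as in the type $\bA$ case of Proposition~\ref{P:direct image finiteness}, via Theorem~\ref{T:finitely generated to pseudocoherent type A}), the finiteness in (a) reduces to the case where $\calF$ is killed by $p$. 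By smoothness one also fixes a finite covering $\{U_a\}$ of $X$ by affinoid subdomains, each finite \'etale over a product of annuli $\Spa(K\{s_1/T_1,T_1/r_1,\dots,s_n/T_n,T_n/r_n\})$, with all intersections rational in the members containing them, exactly as in Definition~\ref{D:finiteness proper covering}.

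Over each $U_a$ one forms a restricted toric tower with Galois group $\Gamma \cong \ZZ_p^n$ (Galois since $K$ is algebraically closed) and realizes $\calF/p$ as a $(\varphi,\Gamma)$-module over the associated imperfect period ring mod $p$, following \cite[\S5]{part2}; then $H^\bullet(X_{\proet},T)$ is computed by the totalization of the \v{C}ech complex for $\{U_a\}$ whose terms are the continuous $\Gamma$-cochain complexes of these modules, glued by $\varphi-1$. As in the proof of Theorem~\ref{T:integral ordinary phi-Gamma finiteness}, these imperfect period rings mod $p$ carry a family of Banach norms $|\cdot|_{s,r}$, and the hypercohomology does not depend on which norm is used; computing it with two norm-pairs, one strictly interior to the other, relates the two totalizations by a completely continuous quasi-isomorphism (Example~\ref{exa:Tate completely continuous}, Remark~\ref{R:fingen cc}), the comparison being compatible with the \v{C}ech and toric structure only up to homotopy. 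Feeding this into the totalization form of Cartan--Serre, Lemma~\ref{L:Cartan-Serre homotopy}, after extending scalars from $\FF_p$ to $\FF_p((z))$ to obtain a topologically nilpotent unit and projecting onto the $z^0$-component at the end (as in Theorem~\ref{T:integral ordinary phi-Gamma finiteness}), shows each $H^i(X_{\proet},T)$ is finite over $\FF_p$ when $\calF$ is killed by $p$, hence finite over $\ZZ_p$ in general. This is (a).

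For (b), part (a) applied to $T/p^n$ shows the groups $H^i(X_{\proet},T/p^n)$ are finite, so the relevant $\varprojlim^1$ vanish and $H^i(X_{\proet},T) = \varprojlim_n H^i(X_{\proet},T/p^n)$; it thus suffices to treat finite coefficients. For $T/p^n$, the complexes above are visibly compatible with base extension along $K \hookrightarrow K'$: the toric towers, the imperfect period rings, and the coefficient modules all base-change and $\Gamma$ is unchanged, so the comparison map is the base extension along the associated map of base-point period rings mod $p^n$ --- which after tilting is a faithfully flat, $\varphi$-equivariant extension of algebraically closed perfectoid fields --- applied to the $\Gamma$-cochain factor, followed by $\Cone(\varphi-1)$. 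A descent argument in the style of Lemma~\ref{L:base extension cohomology Robba} (invoking Remark~\ref{R:pseudoflat} after passing to the cyclotomic extension to make the rings perfectoid, and using that $X \times_K K'$ stays connected so the Artin--Schreier invariants are unchanged) shows this comparison is a quasi-isomorphism; passing to the limit gives (b).

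The main obstacle is the one already present in Proposition~\ref{P:direct image finiteness}: the Banach rings are not noetherian, so Cartan--Serre yields only containment of the cohomology in finite modules, and promoting this to genuine finite generation --- together with the bookkeeping that makes the \v{C}ech and toric comparison maps cohere in $K^+(\BMod_{\FF_p((z))})$ rather than merely up to incoherent homotopy, and, for (b), the control of the completed base extension of the a priori infinitely generated groups $H^i(X_{\proet},\calF/p)$ --- is where the work concentrates. The smooth-affinoid hypothesis keeps everything close enough to the toric model that no phenomenon beyond those handled in \S\ref{subsec:pro-etale finiteness} and \S\ref{sec:base change} should arise.
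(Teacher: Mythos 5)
There is a genuine gap in your part (a): the Cartan--Serre mechanism you invoke is not available here, because $X$ is a smooth \emph{affinoid}, not proper over $\Spa(K,K^+)$. The completely continuous comparison in Proposition~\ref{P:direct image finiteness} comes from Lemma~\ref{L:proper covering}, i.e.\ from two finite coverings $\{V_{1,i}\},\{V_{2,i}\}$ of the total space with each $V_{2,i}$ inner in $V_{1,i}$ relative to the base; an affinoid over a point admits no such pair of coverings. Your substitute --- ``two norm-pairs, one strictly interior to the other'' on the imperfect period rings mod $p$, as in Theorem~\ref{T:integral ordinary phi-Gamma finiteness} --- does not transplant: in that theorem the nested norms $\left|\cdot\right|_{s,r}$ live on the \emph{arithmetic} annulus of the cyclotomic variable $\pi$, which is absent when $K$ is algebraically closed, and in type $\bA$ mod $p$ the ring $\bA_\psi/(p)$ is a single characteristic-$p$ Banach ring with no such one-parameter family of inequivalent complete norms. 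If instead you shrink the \emph{geometric} annuli underlying the toric charts, the smaller charts no longer cover $X$, so the comparison map is completely continuous but not a quasi-isomorphism. A decisive sanity check: your proposed mechanism makes no essential use of the Artin--Schreier (type $\bA$ mod $p$) structure of $\calF$, so if it worked it would equally prove finiteness of $\Gamma$-cohomology of arbitrary coherent $(\varphi,\Gamma)$-modules over $\tilde{\bC}$ on smooth affinoids --- which is false by Examples~\ref{exa:bad closed unit disc} and~\ref{exa:bad open annulus}.

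The paper's proof is instead an explicit computation. After reducing (as you do) to $T$ killed by $p$, and further to $T$ trivial via adjunction along a finite \'etale cover, one decomposes $R_\psi = \bA_\psi/(p)$ as $K^\flat$ (via the residue map) plus a completed direct sum of Frobenius twists of a monomial complement $S$; an explicit chain homotopy (from \cite[Lemma~7.1.7]{part2}) kills $H^i_\Gamma(S)$, and $\varphi-1$ is handled directly on the complement, so everything collapses to $H^i_{\varphi,\Gamma}(K^\flat)$. Finiteness in (a) and base-change invariance in (b) then both follow at once from $\ker(\varphi-1,K^\flat)=\ker(\varphi-1,K^{\prime\flat})=\FF_p$ and $\coker(\varphi-1,K^\flat)=\coker(\varphi-1,K^{\prime\flat})=0$, which uses that both tilted fields are algebraically closed. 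Your part (b) is closer in spirit (reduce to finite coefficients, base-change the complex), but as written it relies on the broken part (a), and pseudoflat base change alone yields $H^i(X')\cong H^i(X)\otimes_R R'$ rather than the asserted isomorphism $H^i(X)\to H^i(X')$ of $\ZZ_p$-modules; identifying the $\FF_p$-structure on both sides is exactly where the explicit $\varphi-1$ computation on the tilted base fields is needed.
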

\begin{proof}
Since both claims are local, we may assume that $X$ is the base of a restricted toric tower $\psi$. Let $\psi'$ be the base extension of this tower to $X'$.
Let $\bA_\psi$ (resp.\ $\bA_{\psi'}$) be the imperfect period ring associated to the tower $\psi$ (resp.\ $\psi'$); by \cite[Theorem~7.1.5]{part2}, the vertical arrows in the commutative diagram
\[
\xymatrix{
H^i(X_{\proet}, T) \ar[r] \ar[d] & H^i(X_{\proet}, T') \ar[d] \\
H^i_{\varphi, \Gamma}(\calF) \ar[r] & H^i_{\varphi, \Gamma}( \calF')
}
\]
are isomorphisms. To check that the horizontal arrows are isomorphisms, by Lemma~\ref{L:cohomology mod p} it suffices to check the case where $T$ is killed by $p$; using the adjunction morphism for a suitable finite \'etale cover of $X$,
we may further assume that $T$ is trivial and $X$ is connected.
In this case, let $R_\psi$ (resp.\ $R_{\psi'})$ denote the reduction of $\bA_\psi$ (resp.\ $\bA_{\psi'}$) modulo $p$. Let $K^\flat$ (resp.\ $K^{\prime\flat}$) denote the tilt of $K$ (resp.\ $K'$). The residue map
\[
f\mapsto \Res f \frac{dT_1}{T_1} \wedge \cdots \wedge \frac{dT_n}{T_n}
\]
defines a projection of $R_\psi$ onto $K^\flat$ in the category of Banach modules over $K^\flat$; the kernel of this morphism can be expressed as the completed direct sum of $\varphi^n(S)$ over all $s \geq 0$, where
\[
S = \bigoplus{(e_1,\dots,e_n) \in (\ZZ/p\ZZ)^n \setminus (0,\dots,0)} T_1^{e_1} \cdots T_n^{e_n} R_\psi^p.
\]
In particular, on this kernel, $\varphi-1$ vanishes and its cokernel is isomorphic to $S$.

Using the construction from \cite[Lemma~7.1.7]{part2}, we may exhibit a chain homotopy witnessing the vanishing of $H^i_\Gamma(S)$ for all $i \geq 0$; it follows that the morphisms
\[
H^i_{\varphi, \Gamma}(K^\flat) \to H^i_{\varphi, \Gamma}(R_\psi),
\qquad
H^i_{\varphi, \Gamma}(K^{\prime \flat}) \to H^i_{\varphi, \Gamma}(R_{\psi'})
\]
are isomorphisms. Since
\[
\ker(\varphi-1, K^\flat) = \ker(\varphi-1, K^{\prime \flat}) = \FF_p, \qquad
\coker(\varphi-1, K^\flat) = \coker(\varphi-1, K^{\prime \flat}) = 0,
\]
this yields the desired result.
\end{proof}

We now recover Theorem~\ref{T:finiteness1}(a) and a bit more.

\begin{theorem} \label{T:etale cohomology over point}
Assume that $K$ is algebraically closed.
Let $X$ be a smooth proper rigid analytic variety over $K$.
Let $V$ be an \'etale $\Qp$-local system on $X$.
\begin{enumerate}
\item[(a)]
The pro-\'etale cohomology groups
$H^i(X_{\proet}, V)$ are finite-dimensional $\QQ_p$-vector spaces;
moreover, these groups vanish for $i > 2n$.
\item[(b)]
The formation of the groups in (a) commutes with base change from $K$ to a larger algebraically closed nonarchimedean field.
\item[(c)]
Let $f: X \to \Spa(K,K^+)$ be the structure morphism,
and let $\calF$ be the \'etale $(\varphi, \Gamma)$-module
over $\tilde{\bC}^{[s,r]}_X$ associated to $V$.
Then for each $i$, $R^i f_* \calF$ is an \'etale projective $(\varphi, \Gamma)$-module.
\end{enumerate}
\end{theorem}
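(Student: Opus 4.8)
The plan is to prove all three parts together, reducing them to the assertion in (c) that each $N^i := R^i f_{\proet *}\calF$ is an \'etale projective $(\varphi,\Gamma)$-module over $\tilde{\bC}^{[s,r]}_K$. By Theorem~\ref{T:direct image}, $N^i$ is in any case a coherent $(\varphi,\Gamma)$-module over $\tilde{\bC}^{[s,r]}_K$ vanishing for $i>2n$; since $K$ is algebraically closed the group $\Gamma$ is trivial, so $N^i$ is simply a coherent $\varphi$-module, equivalently (Remark~\ref{R:type C interval}) a coherent sheaf with Frobenius structure on the Fargues--Fontaine curve $\FFC = \FFC(K^\flat)$. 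Feeding $\RR f_{\proet *}\calF$ into the Artin--Schreier presentation \eqref{eq:etale direct images derived} and taking cohomology gives, for each $i$, a short exact sequence
\[
0 \to \coker(\varphi-1\mid N^{i-1}) \to H^i(X_{\proet},V) \to \ker(\varphi-1\mid N^i) \to 0.
\]
If every $N^i$ is \'etale projective then $\varphi-1$ is surjective on it, so this sequence collapses to $H^i(X_{\proet},V)\cong (N^i)^{\varphi=1}$; and since, being \'etale, $N^i$ is the base extension of the $\QQ_p$-vector space $(N^i)^{\varphi=1}$ along the faithfully flat map $\QQ_p\to\tilde{\bC}^{[s,r]}_K$ while being finitely generated over the latter, $(N^i)^{\varphi=1}$ is finite-dimensional over $\QQ_p$ and vanishes for $i>2n$. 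This yields (a) and (c). For (b) one uses that the formation of $N^i$, hence of $(N^i)^{\varphi=1}$, commutes with base change to a larger algebraically closed field: this is the field-extension analogue of Theorem~\ref{T:general base change}, handled as in the proof of Lemma~\ref{L:cohomology base field}(b) (or via the $v$-descent of Lemma~\ref{L:base change point1}).

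So everything comes down to showing $N^i$ is \'etale projective. The decisive structural fact is that $\calF$, being \'etale, is \emph{fiberwise trivial over $X$}: for each $x\in X$ the restriction of $\calF$ to the Fargues--Fontaine curve over the (completed) residue field $\kappa(x)$ is the \'etale $\varphi$-module $V_x\otimes_{\QQ_p}\tilde{\bC}_{\kappa(x)}$ attached to the trivial fiber $V_x$ of the local system; equivalently, on the relative Fargues--Fontaine curve $\FFC_X$ over $X$ the vector bundle attached to $\calF$ is pulled back from the $\QQ_p$-local system $V$, which is what should force $\RR f_{\proet *}\calF$ to be ``constant''. Concretely, there is a natural comparison morphism
\[
u_i\colon H^i(X_{\proet},V)\otimes_{\QQ_p}\tilde{\bC}^{[s,r]}_K \longrightarrow N^i,
\]
and the plan is to prove that $u_i$ is an isomorphism. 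As $N^i$ is coherent, it suffices to check this after base change along $\tilde{\bC}^{[s,r]}_K\to C$ for each residue field $C$ of a closed point of $\FFC$ (an untilt of $K^\flat$): there the fiber of $\calF$ is $V\otimes_{\QQ_p}\widehat{\calO}_{X_C}$, so $u_i$ specializes to Scholze's Hodge--Tate (primitive) comparison isomorphism $H^i(X_{\proet},V)\otimes_{\QQ_p}C \xrightarrow{\ \sim\ } H^i(X_{C,\proet},V\otimes_{\QQ_p}\widehat{\calO}_{X_C})$ (this is where properness of $X$ enters; cf.\ \cite{scholze2}), whose target has $C$-dimension independent of the untilt $C$. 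Combined with coherence of $N^i$ (Theorem~\ref{T:direct image}) and the base-change compatibility of its formation (Theorem~\ref{T:general base change}) -- that is, the cohomology-and-base-change formalism on $\FFC$ -- this forces $u_i$ to be an isomorphism, and then $N^i\cong H^i(X_{\proet},V)\otimes_{\QQ_p}\tilde{\bC}^{[s,r]}_K$ is visibly \'etale projective.

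I expect the main obstacle to be exactly this last step, and specifically the verification that $N^i$ is \emph{torsion-free}. A coherent sheaf on $\FFC$ with nonzero torsion has infinite-dimensional space of global sections over $\QQ_p$ (the residue fields of $\FFC$ being infinite over $\QQ_p$), which would contradict finiteness of $H^i(X_{\proet},V)$; so torsion-freeness is equivalent to finiteness and is precisely where one pays for working with a genuine $\QQ_p$-local system rather than a $\ZZ_p$-local system -- there is no global lattice to reduce modulo $p$, so neither the d\'evissage of Lemma~\ref{L:cohomology base field} nor the reduction underlying Theorem~\ref{T:main finiteness etale alg closed type A} is available. I would obtain it from cohomology and base change on $\FFC$: the constancy of $\dim_C H^i(X_{C,\proet},V\otimes_{\QQ_p}\widehat{\calO}_{X_C})$ supplied by the Hodge--Tate comparison, together with coherence of $N^i$, forces $N^i$ to be locally free on $\FFC$; once that is known, $u_i$ upgrades from the pointwise statement to a global isomorphism as above, and the slopes of $N^i$ are pinned to $0$ because $\calF$ is fiberwise trivial over $X$. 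The remaining ingredients -- the fiberwise description of $\calF$ at points of $\FFC$ in terms of $\widehat{\calO}$, and the field-extension base change in (b) -- are routine given the machinery of \cite{part1, part2} and the arguments already used in Lemma~\ref{L:base change point1} and Lemma~\ref{L:base change point2}.
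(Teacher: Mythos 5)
Your reduction of the theorem to the statement that each $N^i = R^i f_{\proet *}\calF$ is an \'etale projective $(\varphi,\Gamma)$-module, and your diagnosis that torsion-freeness/finiteness is the crux, both match the actual structure of the problem. But the mechanism you propose for the key step does not work, and is in fact circular. You want to check that $u_i$ is an isomorphism (equivalently, that $N^i$ is locally free of constant fiber dimension on the Fargues--Fontaine curve) by specializing at each untilt $C$ to ``Scholze's Hodge--Tate (primitive) comparison isomorphism'' for $V\otimes_{\QQ_p}\widehat{\calO}_{X_C}$. That comparison (\cite[Theorem~5.1]{scholze2} and its consequences) is proved by reduction modulo $p$ and is only available for $\FF_p$- and $\ZZ_p$-local systems; for a $\QQ_p$-local system with no global lattice --- exactly the case this theorem exists to handle, as you yourself note in your last paragraph --- neither the finiteness of $H^i(X_{C,\proet}, V\otimes\widehat{\calO}_{X_C})$ nor the comparison with $H^i(X_{\proet},V)\otimes C$ is known at this stage. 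Moreover your source $H^i(X_{\proet},V)\otimes_{\QQ_p}\tilde{\bC}^{[s,r]}_K$ only makes sense as a coherent comparison object once you know $H^i(X_{\proet},V)$ is finite-dimensional, which is part (a); so the constancy of fiber dimension that is supposed to drive the cohomology-and-base-change argument is precisely the assertion you are trying to prove.

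The paper breaks this circle by a dimension count in the category of Banach--Colmez spaces. From the Artin--Schreier presentation \eqref{eq:etale direct images derived} and the classification of $\varphi$-modules over the Robba ring into sums of $\calO(s)$, each $H^i(X_{\proet},V)$ is a priori a Banach--Colmez space, with an additive $K$-dimension and $\QQ_p$-dimension. The input that pins down the slopes is not a comparison isomorphism but invariance under extension of the algebraically closed base field: this is established \emph{locally} on $X$ for torsion ($\underline{\ZZ_p}$-module) coefficients in Lemma~\ref{L:cohomology base field}, by an explicit residue-map splitting on imperfect period rings and d\'evissage mod $p$ (legitimate there because one works with $\underline{\ZZ_p}$-modules, not with $V$ itself). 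Since extending $K$ changes the underlying topological $\QQ_p$-vector space of any Banach--Colmez space of positive $K$-dimension, invariance forces the $K$-dimension of $H^i(X_{\proet},V)$ to vanish, giving finiteness over $\QQ_p$ (parts (a), (b)); the slope constraints on $H^0(\calO(s))$ and $H^1(\calO(s))$ then force $N^i$ to be a sum of copies of $\calO(0)$, giving (c). You would need to replace your appeal to the primitive comparison by an argument of this kind (or some other unconditional input) to close the gap.
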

\begin{proof}
 We consider the category of \emph{Banach-Colmez spaces over $K$} (i.e., the \emph{Espaces de Banach de dimension finie} of \cite{colmez-banach}); roughly speaking, these are objects in the category of topological $\QQ_p$-vector spaces which, in the quotient by the Serre subcategory of finite-dimensional $\QQ_p$-vector space, becomes isomorphic to the restriction of scalars of a finite-dimensional $K$-vector space. 
Each such space has a \emph{$K$-dimension}, which is a nonnegative integer, and a \emph{$\QQ_p$-dimension}, which can be any integer in general but must be nonnegative if the $K$-dimensional vanishes; these are both additive in short exact sequences.

Let $\calC$  denote the category of $\varphi$-modules over the Robba ring associated to $K$ (or equivalently, vector bundles on the Fargues-Fontaine curve associated to $K$. The basic properties of $\calC$ that we need (e.g., see \cite{fargues-fontaine})
are that every object of $\calC$ admits a direct sum into copies of certain standard objects $\calO(s)$ parametrized by $s \in \QQ$, and the cohomology groups of these objects (which are only supported in degrees 0 and 1) are Banach-Colmez spaces over $K$ with the following additional constraints.
\begin{itemize}
\item
For $s<0$, $H^0$ vanishes and $H^1$ has positive $K$-dimension (determined by $s$).
\item
For $s=0$, $H^0$ has $K$-dimension 0 (but nonzero $\QQ_p$-dimension) and $H^1$ vanishes.
\item
For $s>0$, $H^0$ has positive $K$-dimension (determined by $s$) and $H^1$ vanishes.
\end{itemize}
From \eqref{eq:etale direct images derived} and the preceding discussion,
it follows that each group $H^i(Y_{\proet}, V)$ is a Banach-Colmez space; moreover, base extension from $K$ to $K'$ preserves the $K$-dimension. However, by applying Lemma~\ref{L:cohomology base field} locally on $Y$, we see that $R^i f_* V$ is invariant under an algebraically closed base field extension, so the $K$-dimension of $H^i(Y_{\proet}, V)$ must vanish. It follows that $H^i(Y_{\proet}, V)$ is a finite-dimensional $\QQ_p$-vector space; this proves (a) and (b).
Again from \eqref{eq:etale direct images derived} and the classification, it follows that $R^i f_* \calF$ splits as a direct sum of copies of $\calO(s)$ where only $s=0$ is allowed.
This proves (c).
\end{proof}

We now promote the previous statement to the relative case.

\begin{theorem} \label{T:etale direct images type C}
With notation as in Theorem~\ref{T:direct image}, suppose that 
$\calF$ is the \'etale $(\varphi, \Gamma)$-module
over $\tilde{\bC}^{[s,r]}_Y$
associated to the $\QQ_p$-local system $V$ on $Y$.
For each $i$, the map \eqref{eq:etale direct images}
is an isomorphism identifying $R^i f_* V$ with $(R^i f_* \calF)^{\varphi=1}$.
In particular, $R^i f_* V$ is a $\QQ_p$-local system and $R^i f_* \calF$ is an \'etale projective $(\varphi, \Gamma)$-module.
\end{theorem}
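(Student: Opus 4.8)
The plan is to first prove that each higher direct image $R^i f_* \calF$ is an \'etale projective $(\varphi, \Gamma)$-module over $\tilde{\bC}^{[s,r]}_X$; the isomorphism \eqref{eq:etale direct images} and the identification $R^i f_* V \cong (R^i f_* \calF)^{\varphi = 1}$ will then follow formally.

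Granting \'etale projectivity, here is the formal deduction. From \eqref{eq:etale direct images derived} we have $\RR f_* V \cong \Cone(\varphi - 1, \RR f_* \calF)$, whose long exact sequence of cohomology sheaves gives, for each $i$, a short exact sequence
\[
0 \to \coker\bigl(\varphi - 1 \mid R^{i-1} f_* \calF\bigr) \to R^i f_* V \to \ker\bigl(\varphi - 1 \mid R^i f_* \calF\bigr) \to 0.
\]
For any \'etale projective $(\varphi, \Gamma)$-module $\calG$ over $\tilde{\bC}^{[s,r]}_X$, the map $\varphi - 1$ is surjective as a morphism of pro-\'etale sheaves, $\calG^{\varphi = 1}$ is a $\QQ_p$-local system, and $\calG \cong \calG^{\varphi = 1} \otimes_{\underline{\QQ_p}} \tilde{\bC}^{[s,r]}_X$ --- this is essential surjectivity of the correspondence of Theorem~\ref{T:Artin-Schreier rational} onto its \'etale part (with Remark~\ref{R:type C interval} to pass between $\tilde{\bC}_X$ and $\tilde{\bC}^{[s,r]}_X$). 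Applying this with $\calG = R^{i-1} f_* \calF$ kills the left-hand term and with $\calG = R^i f_* \calF$ identifies the right-hand term with $(R^i f_* \calF)^{\varphi = 1}$; so $R^i f_* V \cong (R^i f_* \calF)^{\varphi = 1}$ is a $\QQ_p$-local system, and tensoring back up with $\tilde{\bC}^{[s,r]}_X$ identifies \eqref{eq:etale direct images} with the identity on $R^i f_* \calF$.

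To prove \'etale projectivity of $R^i f_* \calF$, I would proceed as follows. By Theorem~\ref{T:direct image} it is a coherent $(\varphi, \Gamma)$-module over $\tilde{\bC}^{[s,r]}_X$, vanishing for $i > 2n$, and by Theorem~\ref{T:general base change} (and Remark~\ref{R:complexes}, using that $\calF$ is projective) its formation commutes with arbitrary base change on $X$. Hence one may work locally on $X$ and pass to the fibre over any point $x \in X$; since the whole package commutes with extension of the base field, one may take the complete residue field $\kappa(x)$ to be algebraically closed. Base change along $\Spa(\kappa(x), \kappa(x)^+) \to X$ then identifies $\LL g^*_{\proet} \RR f_{\proet *} \calF$ with $\RR (f_x)_{\proet *}\bigl(\calF|_{Y_x}\bigr)$, whose cohomology sheaves are, by Theorem~\ref{T:etale cohomology over point}(c), \'etale projective $(\varphi, \Gamma)$-modules over the Robba ring of $\kappa(x)$.

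The main obstacle is the final step: promoting ``the derived fibres of $R^i f_* \calF$ over each geometric point of $X$ have \'etale projective cohomology'' to ``$R^i f_* \calF$ is \'etale projective over $X$''. Since $\tilde{\bC}^{[s,r]}_X$ is not noetherian, naive ``flatness from constant fibre dimension'' arguments are unavailable, and one must instead invoke the structure theory of relative $(\varphi, \Gamma)$-modules. The argument I would give is that the Harder--Narasimhan polygon of $R^i f_* \calF$ at a point of $X$ is, by the fibrewise computation, constantly the zero polygon; the semicontinuity and constancy results for slope filtrations in families from \cite{part1}, combined with the noetherian-type finiteness of $\CPhi_X$ in Theorem~\ref{T:pseudocoherent noetherian}, then force $R^i f_* \calF$ to be globally of pure slope $0$, hence \'etale, and an \'etale coherent $(\varphi, \Gamma)$-module of pure slope $0$ is automatically projective. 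Once all the $R^i f_* \calF$ are known to be \'etale projective, the formal argument above finishes the proof.
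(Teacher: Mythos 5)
Your overall architecture matches the paper's: reduce everything to showing that each $R^i f_* \calF$ is \'etale projective, use base change to pass to fibres over geometric points, and invoke Theorem~\ref{T:etale cohomology over point} there; your formal deduction of \eqref{eq:etale direct images} from \'etale projectivity is also essentially the paper's closing step. But there are two genuine gaps. First, Theorem~\ref{T:general base change} is a statement about $\LL g^*_{\proet} \RR f_{\proet *}$; to conclude that the \emph{individual} sheaves $R^i f_* \calF$ commute with underived base change you need the terms $L_j g^*_{\proet} R^q f_{\proet *} \calF$ to vanish for $j>0$, $q>i$, which is exactly why the paper runs a descending induction on $i$: the inductive hypothesis that $R^q f_* \calF$ is projective for $q \geq i+1$ kills those terms. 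You skip this induction, and without it the identification of $g^* R^i f_* \calF$ with the cohomology of the fibre is not justified.

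Second, and more seriously, your globalization step --- from ``all geometric fibres are \'etale projective'' to ``$R^i f_* \calF$ is \'etale projective'' --- is the actual content of the theorem, and the mechanism you propose does not work as stated. The Harder--Narasimhan and slope-semicontinuity machinery of \cite{part1} is developed for \emph{projective} $(\varphi, \Gamma)$-modules, i.e., families of vector bundles on Fargues--Fontaine curves. A priori $R^i f_* \calF$ is only coherent: it could have torsion (for instance $\theta$-local torsion pulled back from a coherent $\calO_X$-module, cf.\ Remark~\ref{R:coherent type C}) or fail to be saturated in its double dual, and such objects have no HN polygon to which semicontinuity could be applied; your assertion that ``an \'etale coherent $(\varphi, \Gamma)$-module of pure slope $0$ is automatically projective'' is unsubstantiated and essentially assumes the conclusion. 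The paper's route is: deduce torsion-freeness from projectivity of the fibres; then, for $X$ a \emph{curve}, embed $R^i f_* \calF$ into its double dual $\calG$, prove $\calG$ projective because it descends to a reflexive module over a two-dimensional regular noetherian ring (this is where the restriction to curves is essential, since reflexive does not imply projective in higher dimension), and kill the $\theta$-local cokernel by a degree comparison at geometric points using that the fibres are \'etale of degree $0$; finally deduce the general case from the curve case via the fibrewise criteria for projectivity \cite[Proposition~2.8.4]{part1} and for \'etale-ness \cite[Corollary~7.3.9]{part1}. None of this is replaced by an appeal to slope semicontinuity, so the central step of your proposal remains unproved.
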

\begin{proof}
We proceed by descending induction on $i$. Given the claim for degree $\geq i+1$,
Theorem~\ref{T:general base change} implies that the formation of $R^i f_* \calF$ commutes with arbitrary base change.

Suppose that $X$ is a curve and $K$ is algebraically closed. 
By Remark~\ref{R:coherent type C}, the torsion submodule of $R^i f_* \calF$ is annihilated by some $t \in \tilde{\bC}^{[s,r]}_K$; in particular, any torsion persists upon base extension to a geometric point. By Theorem~\ref{T:etale cohomology over point}, this means that $R^i f_* \calF$ is torsion-free, and in particular co-$\theta$-projective
(see Definition~\ref{D:co-theta-projective}).
Moreover, there exists a Zariski-closed nowhere dense subspace $Z$ of $X$ such $R^i f_* \calF|_{X\setminus Z}$ is projective (namely the support of the Fitting ideal of
$R^i f_* \calF$; see \cite[Remark~8.7.6]{part2}).

By Theorem~\ref{T:pseudocoherent noetherian}, we may form the double dual $\calG$ of $R^i f_* \calF$ in the category of coherent $(\varphi, \Gamma)$-modules; by the previous paragraph, the natural morphism $R^i f_* \calF \to \calG$ is injective. 
By \cite[Theorem~5.9.4, Theorem~7.1.9]{part2}, $\calG$ descends to a reflexive module over a two-dimensional regular noetherian ring, which is therefore projective.

If we pull back to a geometric point of $X \setminus Z$, then $R^i f_* \calF \to \calG$ becomes an isomorphism and both sides are \'etale by Theorem~\ref{T:etale cohomology over point}. Since $Z$ is nowhere dense, this implies that the degree of $\calG$ is everywhere 0;
moreover, the cokernel $\calH$ of $R^i f_* \calF \to \calG$ has generic rank 0, so it must be $\theta$-local.

Let $g: X' \to X$ be a geometric point supported in $Z$. Then $L_1 g^* \calH$ is again killed by some power of $t_\theta$; since
\[
0 \to L_1 g^* \calH \to g^* R^i f_* \calF \to g^* \calG \to g^* \calH \to 0
\]
is exact and $g^* R^i f_* \calF$ is torsion-free (again by Theorem~\ref{T:etale cohomology over point}, we must have $L_1 g^* \calH = 0$. Now $g^* R^i f_* \calF \to g^* \calG$ is an injection with $\theta$-local cokernel, so $\deg(g^* R^i f_* \calF) \leq \deg(g^* \calG)$ with equality if and only if $\calH = 0$. However, $g^* R^i f_* \calF$ is \'etale, so its degree is 0, whereas $g^* \calG$ has degreee $0$ as calculated above. We conclude that the support of $\calH$ is empty, and so $R^i f_* \calF \cong \calG$.

We now drop the restriction on $X$. By the curve case, the fibers of $R^i f_* \calF$ over geometric points of $X$ are projective of locally constant rank; by \cite[Proposition~2.8.4]{part1}, $R^i f_* \calF$ is projective. Since $R^i f_* \calF$ is fiberwise \'etale, \cite[Corollary~7.3.9]{part1} implies that $R^i f_* \calF$ is \'etale, that is,
the map
\[
(R^i f_* \calF)^\varphi=1 \otimes_{\underline{\QQ}_p} \tilde{\bC}^{[s,r]}_X \to 
R^i f_* \calF
\]
is an isomorphism. It follows that $\varphi-1$ is surjective on $R^i f_* \calF$, so 
the fact that \eqref{eq:etale direct images derived} is a quasi-isomorphism now implies that 
\eqref{eq:etale direct images} is an isomorphism, as desired.
\end{proof}

\begin{remark} \label{R:local system v-topology}
Recall that by combining Theorem~\ref{T:Artin-Schreier rational} with the properties of the v-topology described in the proof of Lemma~\ref{L:base change point1}, we may immediately deduce that the categories of $\QQ_p$-local systems on $X$ for the pro-\'etale topology and the v-topology coincide, and the pullback equivalence is moreover exact
(compare \cite[Remark~4.5.2]{part2}.) In light of  Theorem~\ref{T:general base change} and Theorem~\ref{T:etale direct images type C}, we may further deduce that higher direct images of $\QQ_p$-local systems along smooth proper morphisms can be computed using the v-topology in place of the pro-\'etale topology.
In particular, formation of these higher direct images commutes with arbitrary base change on $X$, as expected by analogy with the proper base theorem for \'etale cohomology of schemes.
\end{remark}

The question of when the higher direct images of a projective but not \'etale $(\varphi, \Gamma)$-module are again projective is somewhat subtler. One easy observation is the following.

\begin{prop} \label{P:direct image etale seed}
With notation as in Theorem~\ref{T:direct image}, suppose that 
$\calF$ is a coherent $(\varphi, \Gamma)$-module over $\tilde{\bC}^{[s,r]}_X$.
Suppose also that each connected component of $X$ contains a point $x$ such that:
\begin{itemize}
\item
$\calF$ is projective on some neighborhood of $f^{-1}(x)$; and
\item
$f^{-1}(x)$ is contained in the \'etale locus of $\calF$.
\end{itemize}
Then the higher direct images of $\calF$ are co-$\theta$-projective.
\end{prop}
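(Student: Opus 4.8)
The plan is to prove, for each $i$, that the coherent $(\varphi,\Gamma)$-module $\calG := R^i f_* \calF$ over $\tilde{\bC}^{[s,r]}_X$ (coherent by Theorem~\ref{T:direct image}) has the property that the ``non-$\theta$-local part'' of its torsion submodule vanishes, after which co-$\theta$-projectivity follows formally. More precisely, let $\calT \subseteq \calG$ be the torsion submodule, which as in the proof of Theorem~\ref{T:etale direct images type C} is a coherent $(\varphi,\Gamma)$-module annihilated by some nonzero $t \in \tilde{\bC}^{[s,r]}_K$. Writing $t = ab$ with $a$ a product of images of $t_\theta$ under powers of $\varphi$ and $b$ coprime to $t_\theta$ and all its $\varphi$-translates (and arranging $a,b$ to generate $\varphi$- and $\Gamma$-stable ideals), a standard coprimality argument splits $\calT = \calT_\theta \oplus \calT'$ with $\calT_\theta$ killed by $a$ (hence $\theta$-local) and $\calT'$ killed by $b$. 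Granting $\calT' = 0$, the quotient $\calG/\calT = \calG/\calT_\theta$ is torsion-free, hence co-$\theta$-projective by Definition~\ref{D:co-theta-projective}; choosing a single product $S$ of powers-of-$\varphi$ applied to $t_\theta$ which both kills $\calT_\theta$ and trivializes $\calG/\calT_\theta$, exactness of localization gives $\calG[S^{-1}] \cong (\calG/\calT_\theta)[S^{-1}]$ projective, so $\calG$ is co-$\theta$-projective.

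It thus remains to show $\calT' = 0$, and here the hypotheses enter in two steps. First, I would use them to show $\calG$ is projective near each distinguished point. Fix a connected component $X_c$ of $X$ with its point $x$; since the projective locus and the \'etale locus of $\calF$ are open and $f$ is proper (so $f^{-1}(x)$ is quasicompact), there is an open neighborhood $X_0 \subseteq X_c$ of $x$ with $f^{-1}(X_0)$ contained in the locus where $\calF$ is both projective and \'etale. Then $\calF|_{f^{-1}(X_0)}$ is the \'etale $(\varphi,\Gamma)$-module associated to a $\QQ_p$-local system on $f^{-1}(X_0)$, so Theorem~\ref{T:etale direct images type C} applied to $f^{-1}(X_0) \to X_0$ shows that $R^i(f|_{X_0})_*(\calF|_{f^{-1}(X_0)})$ is an \'etale \emph{projective} $(\varphi,\Gamma)$-module over $\tilde{\bC}^{[s,r]}_{X_0}$. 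Applying Theorem~\ref{T:general base change} to the open immersion $X_0 \hookrightarrow X$, whose pro-\'etale pullback is exact, identifies this module with $\calG|_{X_0}$; hence $\calG|_{X_0}$ is projective, in particular torsion-free, so $\calT|_{X_0} = 0$ and a fortiori $\calT'|_{X_0} = 0$.

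Second, I would show that the support of $\calT'$ is a union of connected components of $X$ (after a finite base extension of $K$), which combined with the previous step forces $\calT'|_{X_c} = 0$ for every component $X_c$, hence $\calT' = 0$. This is where the structure theory of Remark~\ref{R:coherent type C} is used: decomposing $\calT'$ along the irreducible factors of a $\varphi$- and $\Gamma$-stable multiple of $b$ and invoking \cite[Corollary~8.8.10]{part2}, each summand is finite projective over a ring of the form $\tilde{\bC}^{[s,r]}_X/(t_k)$ with $t_k$ irreducible and coprime to $t_\theta$; since $\Spec(\tilde{\bC}^{[s,r]}_K/(t_k))$ is a finite disjoint union of connected pieces each surjecting onto a given connected component of $X$, a finite projective module over it that vanishes over a nonempty open of that component vanishes over the whole component. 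I expect this last step --- pinning down the support of torsion coprime to $t_\theta$ while keeping track of the $\varphi$- and $\Gamma$-actions through the passage to irreducible factors and the requisite finite base extension --- to be the main technical obstacle; the remainder is a formal assembly of Theorem~\ref{T:direct image}, Theorem~\ref{T:general base change}, Theorem~\ref{T:etale direct images type C}, and Definition~\ref{D:co-theta-projective}.
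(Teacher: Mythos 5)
Your proof is correct and follows essentially the same route as the paper's: the paper likewise observes (via Remark~\ref{R:coherent type C}, i.e.\ \cite[Corollary~8.8.10]{part2}) that the prime-to-$t_\theta$ torsion of each higher direct image has locally constant rank on $X$, reduces by Theorem~\ref{T:general base change} to checking that this torsion vanishes at the distinguished point of each connected component, and obtains that vanishing from Theorem~\ref{T:etale direct images type C}. The only differences are cosmetic: you restrict to a neighborhood $X_0$ of $x$ rather than pulling back to the fiber over $x$, and you spell out the coprimality decomposition of the torsion submodule and the final passage from ``no prime-to-$t_\theta$ torsion'' to co-$\theta$-projectivity, which the paper leaves implicit in Definition~\ref{D:co-theta-projective}.
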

\begin{proof}
By Remark~\ref{R:coherent type C}, the prime-to-$t_\theta$ torsion of each higher direct 
image has locally constant rank on $X$. By Theorem~\ref{T:general base change}, it thus suffices to rule out the existence of such torsion on one fiber per connected component of $X$; this is immediate from Theorem~\ref{T:etale direct images type C}.
\end{proof}

We next consider the case of a de Rham $(\varphi, \Gamma)$-module.
\begin{defn}
Suppose that $K$ is discretely valued with perfect residue field.
Define the sheaves $\calO \bB^+_{\dR,X}$, $\calO \bB_{\dR,X}$ on $X_{\proet}$
as in \cite[Definition~8.6.5]{part2}. For $\calF$ a $(\varphi, \Gamma)$-module over $\tilde{\bC}^{[s,r]}_X$ with $1 \in [s,r]$,
let $\nu_{\proet}: X_{\proet} \to X$ be the canonical morphism, and define
\[
D_{\dR}(\calF) := \nu_{\proet *} (\calF \otimes_{\tilde{\bC}^{[s,r]}_X} \calO \bB_{\dR,X}).
\]
This is a coherent $\calO_X$-module: namely, by \cite[Theorem~8.6.2]{part2} this holds if $X$ is smooth, and the general case then follows by resolution of singularities.
If $X$ is smooth, we moreover have a canonical $\calO_X$-linear connection on $D_{\dR}(\calF)$ (see \cite[Theorem~7.2]{scholze2}), forcing $D_{\dR}(\calF)$ to be projective.

There is a canonical injective morphism
\[
D_{\dR}(\calF) \otimes_{\calO_X} \calO \bB_{\dR,X} \to \calF \otimes_{\tilde{\bC}^{[s,r]}_X} \calO \bB_{\dR,X}.
\]
We say that $\calF$ is \emph{de Rham} if this morphism is an isomorphism.
In case $\calF$ is the \'etale $(\varphi, \Gamma)$-module associated to a $\QQ_p$-local system $V$, we say that $V$ is de Rham if $\calF$ is;
by a theorem of the second author and X. Zhu \cite{liu-zhu}, this holds if and only if each connected component of $X$ contains a classical point at which $V$ is de Rham in the usual sense of Fontaine.
\end{defn}

\begin{theorem} \label{T:direct image de Rham}
With notation as in Theorem~\ref{T:direct image}, suppose that 
$K$ is discretely valued with perfect residue field,
$X$ is smooth, and $\calF$ is a de Rham $(\varphi, \Gamma)$-module
over $\tilde{\bC}^{[s,r]}_Y$. Then the higher direct images of $\calF$ along $Y_{\proet} \to X_{\proet}$ are de Rham $(\varphi, \Gamma)$-modules over $\tilde{\bC}^{[s,r]}_X$,
and there are natural isomorphisms
\begin{equation} \label{eq:de rham isom}
R^i f_* D_{\dR}(\calF) \cong D_{\dR}(R^i f_{\proet *} \calF).
\end{equation}
\end{theorem}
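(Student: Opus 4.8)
The plan is to reduce the statement to a comparison computation in the pro-\'etale topology and then invoke the machinery already assembled. First I would note that by Theorem~\ref{T:direct image} (type $\bC$ case), the higher direct images $R^i f_{\proet *} \calF$ are coherent $(\varphi, \Gamma)$-modules over $\tilde{\bC}^{[s,r]}_X$, so the only content is the de Rham property and the comparison isomorphism \eqref{eq:de rham isom}. Both sides of \eqref{eq:de rham isom} are coherent $\calO_X$-modules ($X$ being smooth), so it suffices to produce a natural morphism and check it is an isomorphism; since $X$ is smooth, verifying the de Rham property of $R^i f_{\proet *} \calF$ will be equivalent to showing the canonical injection $D_{\dR}(R^i f_{\proet *}\calF) \otimes_{\calO_X} \calO\bB_{\dR,X} \to R^i f_{\proet *}\calF \otimes_{\tilde{\bC}^{[s,r]}_X} \calO\bB_{\dR,X}$ is surjective, which can be tested after pullback to the pro-\'etale site and, ultimately, fiberwise.

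The core of the argument is a projection-formula / base-change computation. Tensoring the complex computing $\RR f_{\proet *}\calF$ with $\calO\bB_{\dR,X}$ and pushing forward along $\nu_{\proet}$, one wants a spectral sequence or direct degeneration identifying $\nu_{\proet *}(\RR f_{\proet *}\calF \otimes \calO\bB_{\dR})$ with $\RR f_* \nu_{\proet *}(\calF \otimes \calO\bB_{\dR}) = \RR f_* D_{\dR}(\calF)$ on the one hand, and with $D_{\dR}(\RR f_{\proet *}\calF)$ on the other. The first identification is a commutation of $\RR f_{\proet *}$ with the functor $- \otimes \calO\bB_{\dR}$ followed by $\nu_{\proet *}$; here one uses that $\calF$ is de Rham, so that locally on $Y$ the object $\calF \otimes_{\tilde{\bC}^{[s,r]}_Y} \calO\bB_{\dR,Y}$ is induced from the projective $\calO_Y$-module $D_{\dR}(\calF)$, reducing the computation of $\RR f_{\proet *}$ to the classical coherent proper pushforward $\RR f_* D_{\dR}(\calF)$ on the analytic site (this is where Scholze's comparison for $\calO\bB_{\dR}$ and Kiehl's coherence theorem enter). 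The second identification is the definition of $D_{\dR}$ applied to the already-known coherent $(\varphi,\Gamma)$-module $R^i f_{\proet *}\calF$, combined with the fact that pushforward along $\nu_{\proet}$ of $\calO\bB_{\dR}$-modules behaves well; one then invokes Theorem~\ref{T:general base change} to reduce checking the comparison to geometric fibers of $f$, where it is the absolute comparison isomorphism of \cite{scholze2} for a smooth proper rigid space over a complete discretely valued field.

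In more detail, the steps in order: (1) invoke Theorem~\ref{T:direct image} to get coherence of $R^i f_{\proet *}\calF$ over $\tilde{\bC}^{[s,r]}_X$; (2) construct the natural comparison morphism $R^i f_* D_{\dR}(\calF) \to D_{\dR}(R^i f_{\proet *}\calF)$ by applying $D_{\dR}$ to the canonical map $D_{\dR}(\calF)\otimes\calO\bB_{\dR} \to \calF\otimes\calO\bB_{\dR}$, pushing forward, and using the projection formula; (3) show that, because $\calF$ is de Rham with $D_{\dR}(\calF)$ projective, $\RR\nu_{\proet *}(\calF\otimes_{\tilde{\bC}^{[s,r]}_Y}\calO\bB_{\dR,Y}) \simeq D_{\dR}(\calF)$ after base change along $f$, reducing the left side of \eqref{eq:de rham isom} to the coherent proper pushforward of a projective $\calO_Y$-module (which is coherent by Kiehl, hence defines a candidate for $D_{\dR}$ of the right side); (4) use Theorem~\ref{T:general base change} to reduce the isomorphy of \eqref{eq:de rham isom} (and the de Rham property of $R^i f_{\proet *}\calF$) to a statement over geometric points $x$ of $X$, i.e.\ to the fact that for a smooth proper rigid space over a complete discretely valued field the \'etale--de Rham comparison holds for de Rham local systems (Scholze, plus the relative formalism of \cite{part2}); (5) conclude that the comparison morphism is an isomorphism globally, hence $D_{\dR}(R^i f_{\proet *}\calF)\otimes\calO\bB_{\dR}\cong R^i f_{\proet *}\calF\otimes\calO\bB_{\dR}$, i.e.\ $R^i f_{\proet *}\calF$ is de Rham. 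The main obstacle I expect is step (3)--(4): making the commutation of $\RR f_{\proet *}$ with $-\otimes\calO\bB_{\dR}$ and $\nu_{\proet *}$ precise at the level of complexes, so that the fiberwise check via Theorem~\ref{T:general base change} actually controls the global comparison map rather than merely its source and target; this requires carefully matching the base-change formalism for $\tilde{\bC}$-coefficients with the coherent base-change formalism for $\calO\bB_{\dR}$-coefficients, and is the place where one must be most careful about which derived functors are being compared.
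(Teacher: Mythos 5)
Your overall strategy matches the paper's, which is in fact much terser: the paper reduces everything to checking that \eqref{eq:de rham isom} is an isomorphism and then cites Scholze's relative comparison argument (\cite[Theorem~7.11]{scholze2}) for that check. Your steps (2)--(5) are essentially an expansion of what that citation covers, and your use of Theorem~\ref{T:general base change} to localize the verification at geometric fibers is consistent in spirit with the paper's other arguments.

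The one genuine omission is the projectivity of $R^i f_{\proet *}\calF$. Theorem~\ref{T:direct image} only gives you a \emph{coherent} $(\varphi,\Gamma)$-module, whereas the conclusion (and the paper's definition of ``de Rham'') requires a \emph{projective} one; you assert that ``the only content is the de Rham property and the comparison isomorphism'' without explaining why coherence upgrades to projectivity. The paper's proof is organized precisely around this point: because $K$ is discretely valued, Remark~\ref{R:coherent type C} guarantees that every coherent $(\varphi,\Gamma)$-module over $\tilde{\bC}^{[s,r]}_X$ is co-$\theta$-projective, i.e., becomes projective after inverting a product of Frobenius translates of $t_\theta$. Hence the only possible failure of projectivity is concentrated at the $t_\theta$-adic completion --- exactly the completion governing $\calO \bB_{\dR,X}$ --- so establishing \eqref{eq:de rham isom} simultaneously delivers projectivity and the de Rham property. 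Without this observation your step (5) does not close the argument: knowing that $D_{\dR}(R^i f_{\proet *}\calF)\otimes_{\calO_X}\calO\bB_{\dR,X}\to R^i f_{\proet *}\calF\otimes_{\tilde{\bC}^{[s,r]}_X}\calO\bB_{\dR,X}$ is an isomorphism for a merely coherent module would not by itself make $R^i f_{\proet *}\calF$ a (projective, hence honest) de Rham $(\varphi,\Gamma)$-module. Inserting the co-$\theta$-projectivity reduction at the start brings your argument in line with the paper's.
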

\begin{proof}
In light of Theorem~\ref{T:direct image}
and the assumption that $K$ is discretely valued, it suffices to check projectivity on the level of $t_\theta$-adic completions,
i.e., to check that \eqref{eq:de rham isom} is an isomorphism.
This follows as in \cite[Theorem~7.11]{scholze2}.
\end{proof}

Combining
Theorem~\ref{T:etale direct images type C} and Theorem~\ref{T:direct image de Rham}
gives a relative version of the \'etale-de Rham comparison isomorphism, generalizing
\cite[Theorem~1.10]{scholze2} which treats the case of $\ZZ_p$-local systems.
\begin{theorem} \label{T:comparison isomorphism}
Let $K$ be a complete discretely valued field of characteristic $0$ whose residue field is perfect of characteristic $p$.
Let $f: Y \to X$ be a smooth proper morphism of rigid analytic spaces over $K$. Let $V$ be a de Rham $\QQ_p$-local system on $Y$, and let $\calF$ be its associated \'etale $(\varphi, \Gamma)$-module.
\begin{enumerate}
\item[(a)]
The higher direct images $R^i f_{\proet *} \calF$ are \'etale de Rham $(\varphi, \Gamma)$-modules.
\item[(b)]
The canonical morphisms $(R^i f_{\proet *} V) \otimes_{\underline{\QQ}_p} \tilde{\bC}_X \to R^i f_{\proet *} \calF$ are isomorphisms.
Equivalently, the canonical morphisms
$R^i f_{\proet *} V \to (R^i f_{\proet *} \calF)^{\varphi=1}$ are isomorphisms.
\item[(c)]
The canonical morphisms $R^i f_* D_{\dR}(\calF) \to D_{\dR}(R^i f_{\proet *} \calF)$
are isomorphisms.
\end{enumerate}
\end{theorem}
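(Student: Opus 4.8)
The plan is to obtain all three assertions by assembling Theorem~\ref{T:etale direct images type C} and Theorem~\ref{T:direct image de Rham}, after a harmless reduction to the setting in which both apply. Since $\calF$ is by definition the \'etale $(\varphi,\Gamma)$-module attached to $V$ and $V$ is de Rham, $\calF$ is itself a de Rham $(\varphi,\Gamma)$-module; using Remark~\ref{R:type C interval} I would realize $\calF$ as a pseudocoherent $(\varphi,\Gamma)$-module over $\tilde{\bC}^{[s,r]}_Y$ for an interval with $1 \in [s,r]$ and $s \leq r/p$, so that the de Rham period sheaf $\calO\bB_{\dR}$ and the operator $\theta$ are available. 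The cohomology computed with $\tilde{\bC}^{[s,r]}$ agrees with the $(\varphi,\Gamma)$-hypercohomology of $\calF$ over $\tilde{\bC}_X$ by the last part of Remark~\ref{R:type C interval}, and passing to the limit over such intervals recovers the statements phrased with $\tilde{\bC}_X$.

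For part (b), I would simply invoke Theorem~\ref{T:etale direct images type C}: since $\calF$ is the \'etale $(\varphi,\Gamma)$-module of the $\QQ_p$-local system $V$, that theorem asserts precisely that the canonical map \eqref{eq:etale direct images}, $(R^i f_{\proet *} V) \otimes_{\underline{\QQ}_p} \tilde{\bC}^{[s,r]}_X \to R^i f_{\proet *}\calF$, is an isomorphism identifying $R^i f_{\proet *} V$ with $(R^i f_{\proet *}\calF)^{\varphi=1}$, that $R^i f_{\proet *} V$ is again a $\QQ_p$-local system, and that $R^i f_{\proet *}\calF$ is an \'etale projective $(\varphi,\Gamma)$-module. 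The equivalence of the two formulations in (b) is then formal: surjectivity of $\varphi-1$ on the \'etale module $R^i f_{\proet *}\calF$ combined with the quasi-isomorphism \eqref{eq:etale direct images derived} converts the identification $R^i f_{\proet *} V = (R^i f_{\proet *}\calF)^{\varphi=1}$ into the tensor statement. This step also supplies the ``\'etale'' half of part~(a).

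For the ``de Rham'' half of (a) and for part (c), I would invoke Theorem~\ref{T:direct image de Rham}. The hypotheses of Theorem~\ref{T:comparison isomorphism} --- $K$ complete discretely valued of characteristic $0$ with perfect residue field --- are exactly what that theorem requires; if $X$ is not smooth one first reduces to the smooth case by resolution of singularities, compatibly with the definition of $D_{\dR}$, which is set up in that generality. Theorem~\ref{T:direct image de Rham} then gives that $R^i f_{\proet *}\calF$ is a de Rham $(\varphi,\Gamma)$-module over $\tilde{\bC}^{[s,r]}_X$ and that the natural maps $R^i f_* D_{\dR}(\calF) \to D_{\dR}(R^i f_{\proet *}\calF)$ of \eqref{eq:de rham isom} are isomorphisms, which is (c). Combining this with the output of Theorem~\ref{T:etale direct images type C}, the sheaf $R^i f_{\proet *}\calF$ is simultaneously \'etale and de Rham, completing (a).

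The only genuine work is bookkeeping rather than new mathematics: one must check that the de Rham structure produced by Theorem~\ref{T:direct image de Rham} on $R^i f_{\proet *}\calF$ is the one compatible with the local-system structure recorded in (b) --- so that assertion (a) concerns a single object carrying both properties --- and that the passage between $\tilde{\bC}^{[s,r]}_X$-modules and the colimit sheaf $\tilde{\bC}_X$, as well as the possible reduction from general $X$ to smooth $X$, is carried out consistently across all three parts. I expect this compatibility check --- tracing the canonical morphisms through the identifications of Remark~\ref{R:type C interval} and the definition of $D_{\dR}$ --- to be the main, though entirely routine, obstacle; no input beyond Theorems~\ref{T:direct image}, \ref{T:etale direct images type C}, and \ref{T:direct image de Rham} is needed.
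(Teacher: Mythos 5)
Your proposal is correct and matches the paper's own treatment: the paper gives no separate proof of Theorem~\ref{T:comparison isomorphism} beyond the remark that it follows by combining Theorem~\ref{T:etale direct images type C} (yielding (b) and the \'etale half of (a)) with Theorem~\ref{T:direct image de Rham} (yielding (c) and the de Rham half of (a)), which is exactly your argument. Your additional bookkeeping remarks about the interval $[s,r]$ and the smoothness of $X$ are reasonable elaborations of points the paper leaves implicit.
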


We point out a corollary suggested by David Hansen.
\begin{cor}
For $K, f$ as in Theorem~\ref{T:comparison isomorphism}, suppose that $X$ is itself smooth over $K$ (as then is $Y$). Then the sheaves
$R^i f_* \Omega^j_{Y/X}$ on $X$ are vector bundles for all $i,j \geq 0$.
\end{cor}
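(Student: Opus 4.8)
The plan is to reduce the statement about the coherent sheaves $R^i f_* \Omega^j_{Y/X}$ to the de Rham comparison theorem just established, specifically Theorem~\ref{T:direct image de Rham} applied to the trivial local system. First I would take $V = \underline{\QQ_p}$ on $Y$, which is visibly de Rham, with associated \'etale $(\varphi, \Gamma)$-module $\calF = \tilde{\bC}_Y$ (or $\tilde{\bC}^{[s,r]}_Y$ with $1 \in [s,r]$). Then $D_{\dR}(\calF) = \calO_Y$ equipped with its trivial connection. Applying Theorem~\ref{T:direct image de Rham}, the higher direct images $R^i f_{\proet *} \calF$ are de Rham $(\varphi, \Gamma)$-modules over $\tilde{\bC}^{[s,r]}_X$, and there are natural isomorphisms $R^i f_* D_{\dR}(\calF) \cong D_{\dR}(R^i f_{\proet *} \calF)$. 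The right-hand side is projective over $\calO_X$ since $X$ is smooth (this projectivity is part of the definition of $D_{\dR}$ on smooth bases, via the canonical connection, as recalled just before Theorem~\ref{T:direct image de Rham}).

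The remaining point is to identify the left-hand side $R^i f_* D_{\dR}(\calF) = R^i f_* \calO_Y$ with the de Rham cohomology sheaves built from $\Omega^\bullet_{Y/X}$, and then to extract the individual Hodge pieces $R^i f_* \Omega^j_{Y/X}$. For the first identification I would invoke the relative de Rham complex: since $f$ is smooth and proper, $Rf_* \calO_Y$ is a direct summand (in the derived category, via the trivial filtration / degeneration considerations below) of $Rf_* \Omega^\bullet_{Y/X}$, and more precisely one works with the full relative de Rham cohomology $\mathcal{H}^i_{\dR}(Y/X) := R^i f_*(\Omega^\bullet_{Y/X})$. Then I would run the Hodge-to-de Rham spectral sequence $E_1^{p,q} = R^q f_* \Omega^p_{Y/X} \Rightarrow \mathcal{H}^{p+q}_{\dR}(Y/X)$; the de Rham comparison gives that $\mathcal{H}^i_{\dR}(Y/X)$ is a vector bundle, and a dimension count on fibers forces degeneration at $E_1$, whence each $R^q f_* \Omega^p_{Y/X}$ is a subquotient of a vector bundle with locally constant rank, hence itself a vector bundle by \cite[Proposition~2.8.4]{part1} or standard coherence-plus-constant-rank arguments.

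Concretely, I would argue fiberwise: for each classical point $x \in X$, the fiber $Y_x$ is a smooth proper rigid variety over the (discretely valued) residue field $K(x)$, and the classical Hodge--de Rham degeneration for smooth proper rigid varieties (which itself follows from Scholze's $p$-adic Hodge theory, \cite{scholze2}) shows $\sum_q \dim_{K(x)} H^q(Y_x, \Omega^p_{Y_x}) = \dim_{K(x)} H^{p+q}_{\dR}(Y_x)$. Combined with upper-semicontinuity of fiber dimensions for coherent sheaves (applied to $R^q f_* \Omega^p_{Y/X}$, which is coherent since $f$ is proper) and the already-established local constancy of $\dim H^i_{\dR}$ in the fibers (from the vector bundle conclusion of Theorem~\ref{T:direct image de Rham}), one gets that all the $\dim H^q(Y_x, \Omega^p_{Y_x})$ are locally constant in $x$ and the Hodge-to-de Rham spectral sequence degenerates at $E_1$. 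A coherent sheaf with locally constant fiber dimension on a reduced rigid space is locally free, so $R^q f_* \Omega^p_{Y/X}$ is a vector bundle for all $p, q$.

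The main obstacle is the degeneration argument itself: making precise the interaction between the $p$-adic Hodge-theoretic input (the comparison isomorphism of Theorem~\ref{T:direct image de Rham}, which gives control of $\mathcal{H}^i_{\dR}(Y/X)$ as a vector bundle but a priori only its total rank) and the Hodge filtration pieces. One must be careful that the relative Hodge-to-de Rham spectral sequence in the rigid analytic setting behaves well; the cleanest route is to pass to fibers over classical points, invoke the absolute Hodge--de Rham degeneration there (a known consequence of \cite{scholze2}), and then globalize using properness (coherence of all terms) together with the semicontinuity of fiber dimensions and the already-proven constancy of the de Rham Betti numbers. An alternative, if one wishes to avoid reproving absolute degeneration, is to observe that Theorem~\ref{T:direct image de Rham} supplies, for each $i$, a projective $D_{\dR}(R^i f_{\proet *}\calF)$ carrying a Hodge filtration whose graded pieces are exactly the $R^q f_* \Omega^p_{Y/X}$ with $p+q=i$ (this is part of the de Rham structure), and projectivity of a filtered module with coherent graded pieces forces each graded piece to be projective; this packages the argument without an explicit spectral-sequence degeneration step.
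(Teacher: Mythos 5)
Your main argument reaches the right conclusion but by a genuinely different and much longer route than the paper. The paper's proof is two lines: take $V = \underline{\QQ_p}$, identify $R^q f_* \Omega^p_{Y/X}$ with the graded quotients of the Hodge filtration on $D_{\dR}(R^{p+q} f_{\proet *} \calF)$ (this identification being part of the filtered comparison underlying Theorem~\ref{T:direct image de Rham}), and then invoke Liu--Zhu \cite[Theorem~3.8(ii)]{liu-zhu}, which asserts precisely that the graded pieces of the Hodge filtration of $D_{\dR}$ of a de Rham $(\varphi,\Gamma)$-module are locally free. You instead transplant Deligne's classical degeneration argument: relative de Rham cohomology is a vector bundle by Theorem~\ref{T:direct image de Rham}, absolute Hodge--de Rham degeneration on fibers over classical points is supplied by \cite{scholze2}, and upper semicontinuity of $x \mapsto \dim H^q(Y_x, \Omega^p_{Y_x})$ together with local constancy of the de Rham Betti numbers forces each Hodge number to be locally constant, whence local freeness on the smooth (in particular reduced) base. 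This works, but it quietly relies on cohomology-and-base-change and semicontinuity for coherent sheaves under proper morphisms of rigid spaces, and on base change for relative de Rham cohomology to relate the rank of the bundle $R^i f_* D_{\dR}(\calF)$ at $x$ to $\dim H^i_{\dR}(Y_x)$; these are standard but should be cited rather than waved at. What the paper's route buys is the avoidance of all of this, at the price of importing a nontrivial theorem of Liu--Zhu; what yours buys is independence from that theorem.

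Your closing ``alternative'' contains a genuine error: it is false that projectivity of a filtered module with coherent graded pieces forces each graded piece to be projective. For instance, a rank-one subsheaf of $\calO_X^2$ that is not a subbundle gives a two-step filtration on a free module whose graded pieces have torsion. The local freeness of the Hodge graded pieces is exactly the content of \cite[Theorem~3.8(ii)]{liu-zhu} and is not formal; if you want to package the argument that way, you must cite that result rather than derive it from projectivity of $D_{\dR}$.
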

\begin{proof}
Take $V = \underline{\QQ_p}$ in Theorem~\ref{T:comparison isomorphism}
and identify the sheaves in question with the graded quotients
of the filtered sheaf $D_{\dR}(\calF)$. By \cite[Theorem~3.8(ii)]{liu-zhu},
these graded quotients are locally free.
\end{proof}

\begin{remark}
The point of view implicit in Theorem~\ref{T:comparison isomorphism} is that it is the $(\varphi, \Gamma)$-module $\calF$, rather than the $\QQ_p$-local system $V$, that lies at the heart of the comparison isomorphism. This point of view is shared by the work of Bhatt, Morrow, and Scholze \cite{bms}, in which the crystalline comparison isomorphism is constructed via a certain $\bA_{\mathrm{inf}}$-valued cohomology theory (for more on which see \cite{morrow}).
\end{remark}

\begin{remark}
The restriction to discretely valued fields in the last few results is a side effect of the fact that the de Rham condition on $(\varphi, \Gamma)$-modules requires discreteness in order to make the definition. If one could extend this definition in a sensible way to  more general $K$, one could hope to generalize these results also.
\end{remark}


\begin{thebibliography}{99}

\bibitem{bms}
B. Bhatt, M. Morrow, and P. Scholze,
Integral $p$-adic Hodge theory,
arXiv:1602.03148v1 (2016).

\bibitem{colmez-banach}
P. Colmez,
Espaces de Banach de dimension finie,
\textit{J. Inst. Math. Jussieu} \textbf{1} (2002), 331--439. 

\bibitem{dejong-vanderput}
A.J. de Jong and M. van der Put,
\'Etale cohomology of rigid analytic spaces,
\textit{Doc. Math.} \textbf{1} (1996), 1--56.

\bibitem{fargues-fontaine}
L. Fargues and J.-M. Fontaine, Courbes et fibr\'es vectoriels en th\'eorie de Hodge
$p$-adique, in preparation; draft (September 2015) available at
\url{http://webusers.imj-prg.fr/~laurent.fargues/}.

\bibitem{herr1}
L. Herr, Sur la cohomologie galoisienne des corps $p$-adiques,
\textit{Bull. Soc. Math. France} \textbf{126} (1998), 563--600.

\bibitem{herr2}
L. Herr, Une approche nouvelle de la dualit\'e locale de Tate,
\textit{Math. Ann.} \textbf{320} (2001), 307--337.

\bibitem{huber}
R. Huber,
\textit{\'Etale Cohomology of Rigid Analytic Varieties and Adic Spaces},
Aspects of Mathematics, E30,
Friedr. Vieweg \& Sohn, Braunschweig, 1996.

\bibitem{kedlaya-rigid-finiteness}
K.S. Kedlaya, Finiteness of rigid cohomology with coefficients,
\textit{Duke Math. J.} \textbf{134} (2006), 15--97.

\bibitem{kedlaya-course}
K.S. Kedlaya, \textit{$p$-adic Differential Equations},
Cambridge Studies in Advanced Math.\ 125, Cambridge Univ. Press, Cambridge,
2010.

\bibitem{kedlaya-noetherian}
K.S. Kedlaya, Noetherian properties of Fargues-Fontaine curves, \textit{Int. Math. Res. Notices} (2015), article ID rnv227.

\bibitem{kedlaya-reified}
K.S. Kedlaya, Reified valuations and adic spectra, \textit{Res. Num. Theory}
(2015) 1:20 (41 pages).

\bibitem{part1}
K.S. Kedlaya and R. Liu, Relative $p$-adic Hodge theory: Foundations,
\textit{Ast\'erisque} \textbf{371} (2015), 239 pages; errata,  \cite[Appendix]{part2}.

\bibitem{part2}
K.S. Kedlaya and R. Liu, Relative $p$-adic Hodge theory, II: Imperfect period rings,
arXiv:1602.06899v2 (2016). 

\bibitem{kpx}
K.S. Kedlaya, J. Pottharst, and L. Xiao, Cohomology of arithmetic families of
$(\varphi, \Gamma)$-modules, \textit{J. Amer. Math. Soc.}
\textbf{27} (2014), 1043--1115.

\bibitem{kiehl-finiteness}
R. Kiehl, Der Endlichkeitssatz f\"ur eigentliche Abbildungen in der nichtarchimedische Funktionentheorie, \textit{Invent. Math.} \textbf{2} (1967), 191--214.

\bibitem{liu-herr}
R. Liu, Cohomology and duality for $(\varphi, \Gamma)$-modules over the Robba ring, 
\textit{Int. Math. Res. Notices} \textbf{2007}, article ID rnm150 (32 pages).

\bibitem{liu-zhu}
R. Liu and X. Zhu, Rigidity and a Riemann-Hilbert correspondence for
$p$-adic local systems, arXiv:1602.06282v3 (2016); to appear in
\textit{Invent. Math.}

\bibitem{morrow}
M. Morrow, Notes on the $\mathbb{A}_{\mathrm{inf}}$-cohomology of \textit{Integral $p$-adic Hodge theory}, arXiv:1608.00922v1 (2016).

\bibitem{scholze2}
P. Scholze, $p$-adic Hodge theory for rigid analytic varieties, 
\textit{Forum of Math. Pi} \textbf{1} (2013), doi:10.1017/fmp.2013.1;
erratum available at \url{http://www.math.uni-bonn.de/people/scholze/pAdicHodgeErratum.pdf}.

\bibitem{scholze-berkeley}
P. Scholze, $p$-adic geometry, UC Berkeley course notes, 2014;
notes by Jared Weinstein available at
\url{http://math.berkeley.edu/~jared/Math274/ScholzeLectures.pdf}.

\bibitem{temkin-local}
M. Temkin, On local properties of non-Archimedean analytic spaces,
\textit{Math. Ann.}
\textbf{318} (2000), 585--607.

\bibitem{temkin-local2}
M. Temkin, On local properties of non-archimedean analytic spaces, II, 
\textit{Israel J. Math.}
\textbf{140} (2004), 1--27.

\end{thebibliography}
\end{document}